\theoremstyle{plain}
	\newtheorem{theorem}{Theorem}[section]
	\newtheorem{lemma}[theorem]{Lemma}
	\newtheorem{corollary}[theorem]{Corollary}
\newcommand{\cK}	{\mathcal K}
\newcommand{\cP}	{\mathcal P}
\newcommand{\cQ}	{\mathcal Q}
\newcommand{\cR}	{\mathcal R}
\newcommand{\GLn}[1][n]	{\operatorname{GL}(#1)}
\newcommand{\Id}	{\operatorname{Id}}
\newcommand{\muTr}	{\tilde \mu}
\newcommand{\ptwo}[2]	{\begin{pmatrix} #1 \\ #2 \\ \end{pmatrix}}
\newcommand{\ptwotwo}[4]{\begin{pmatrix} #1 & #3 \\ #2 & #4 \\ \end{pmatrix}}
\newcommand{\R}		{\mathbb R}
\newcommand{\spn}	{\operatorname{span}}
\newcommand{\SLn}[1][n]	{\operatorname{SL}(#1)}
\newcommand{\VLn}[1][n]	{\operatorname{SL}^\pm (#1)}
\title{Moments and Valuations}
\author{Christoph Haberl and Lukas Parapatits}
\date{}
\begin{document}
	\maketitle

	\begin{abstract}
		\noindent
All measurable and $\SLn$-covariant vector valued valuations on convex polytopes
containing the origin in their interiors are completely classified.
The moment vector is shown to be essentially the only such valuation.
 \\[0.5cm]
		Mathematics subject classification: 52A20, 52B45
	\end{abstract}

	\section{Introduction}\label{intro}
Functionals which are compatible with the geometric and topological structure of their underlying space
are of vital importance in geometry.
In convex geometric analysis, valuations have been studied from this perspective for decades.
Valuations are functionals $\mu \colon \mathcal{S}\to\langle A, + \rangle$ defined on a collection of sets $\mathcal{S}$
with values in an abelian semigroup $\langle A, + \rangle$ such that
\[
	\mu (K \cup L) + \mu (K \cap L) = \mu (K) + \mu (L)
\]
whenever $K$, $L$, $K\cup L$, $K\cap L$ are contained in $\mathcal{S}$.

Due to their critical role in Dehn's solution of Hilbert's Third Problem,
the interest in valuations dates back to the beginning of the twentieth century.
A systematic study of valuations was initiated later by Hadwiger.
This culminated in Hadwiger's celebrated characterization theorem,
where he classified all continuous and rigid motion invariant valuations on the space of convex bodies
(i.e.\ nonempty compact convex subsets of $\R^n$ equipped with Hausdorff distance).
Hadwiger's theorem shows that the vector space of such valuations is finite dimensional
and a basis is given by the intrinsic volumes.
The latter are generalizations of such basic notions as volume, surface area and mean width.

In other words, Hadwiger's result revealed that basic geometric functionals can be characterized as valuations
compatible with certain linear maps and the topology induced by the Hausdorff distance.
This way of looking at functionals in convex geometry turned out to be extremely fruitful.
Indeed, numerous geometric objects have been characterized in this way over the last years.
Examples include mixed volumes, affine surface areas, the projection body operator and the intersection body operator
(see e.g. \cite{Alesker99, bernig08, Klain96, Lud11, Lud10, Lud12, Hab11, Hab10amj, habpar13a, Schduke, Par10a, Parapatits2014,
SchWan10, AbaBer11, Lud06ajm, Lud03, Lud02advproj, Wannerer2011, wannerer13a}).

Let us mention two examples which illustrate that it pays off to characterize valuations in this way.
First, Alesker's ingenious classification \cite{Alesker01} of continuous and translation invariant valuations
not only solved the long-standing McMullen conjecture,
but also serves as the basis of a new algebraic integral geometry
(see e.g. \cite{Alesker03,alesker04,bernig07a,BerFu10,fu06,wannerer13,aleber12}).
Second, Ludwig's seminal work on body valued valuations \cite{Lud05}
paved the way for strengthenings of the sharp $L_p$ Sobolev, the Moser-Trudinger and the Morrey-Sobolev inequalities
(see e.g. \cite{CLYZ, HSX, LutYanZha02jdg}).

Let $\cK_o^n$ denote the set of convex bodies containing the origin in their interiors.
We write $\cP_o^n$ for the subset of $\cK_o^n$ consisting of polytopes only.
In order to obtain Hadwiger type theorems in centro-affine geometry,
it turned out that one has to consider valuations defined on $\cK_o^n$.
This restriction is necessitated by the evolution of the classical Brunn-Minkowski theory towards an Orlicz-Brunn-Minkowski theory.
During this process, several new operators have been discovered and investigated
(see e.g. \cite{BorLutYanZha, garhugwei13, HabLutYanZha09, LutYanZha09badv, LutYanZha09jdg, LR10, wer13, ye13, sta12}).
These new objects are far reaching generalizations of classical notions,
but in most cases they are defined only on $\cK_o^n$.
So aiming at characterizations of these new operators,
one has to describe valuations on $\cK_o^n$ and $\cP_o^n$, respectively.
As an example, the authors recently obtained the following Hadwiger type theorem \cite{habpar13}.
\begin{theorem}\label{th: main0}
	Let $n \geq 2$.
	A map $\mu \colon \cP_o^n\to\R$ is an upper semicontinuous and $\SLn$-invariant valuation
	if and only if there exist constants $k_0, k_1, k_2 \in \R$ such that
	\[
		\mu (P) = k_0 \, \chi (P) + k_1 \, V(P) + k_2 \, V(P^*)
	\]
	for all $P \in \cP_o^n$.
\end{theorem}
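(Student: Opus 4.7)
The forward direction is immediate: $\chi$, $V$, and $P \mapsto V(P^*)$ are each upper semicontinuous and $\SLn$-invariant valuations on $\cP_o^n$, so every linear combination of them is as well.

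For the converse, I would proceed by induction on $n \ge 2$, with the bulk of the work consisting in classifying $\mu$ on simplices containing the origin in their interior. The first task is a reduction: via a careful dissection argument that uses only the valuation property, express $\mu(P)$ for arbitrary $P \in \cP_o^n$ as a signed sum of $\mu$-values on simplices that again lie in $\cP_o^n$. The subtlety here is that the naive cone triangulation from the origin produces simplices with origin on the boundary, outside the domain of $\mu$, so the dissection has to be arranged in such a way that these boundary pieces cancel in the inclusion--exclusion.

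Next, observe that the $\SLn$-orbit of an origin-interior simplex $T = \operatorname{conv}\{v_0, \ldots, v_n\}$ is completely determined by the volume $V(T)$ together with the unordered tuple of barycentric coefficients $\{\alpha_0, \ldots, \alpha_n\}$ defined by $\sum_i \alpha_i v_i = 0$ and $\sum_i \alpha_i = 1$; indeed, any linear map matching these data is forced to lie in $\SLn$. Hence $\SLn$-invariance forces $\mu(T) = f(V(T), \alpha_0, \ldots, \alpha_n)$ for some symmetric function $f$, and the polar volume $V(T^*)$ is likewise an explicit symmetric function of the same data. The heart of the proof is to derive enough functional equations on $f$, by cutting $T$ with well-chosen hyperplanes or perturbing individual vertices and invoking the valuation identity, to obtain a rigid system of Cauchy-type equations.

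The main obstacle will be exactly this last step: extracting enough independent equations and solving them to conclude that $f$ is an affine combination of $1$, $V(T)$, and $V(T^*)$. Upper semicontinuity is the crucial hypothesis here, as it rules out pathological additive solutions of the Cauchy-type equations and forces the expected polynomial form. Once $\mu$ is pinned down on simplices, inverting the reduction from the first step propagates the classification to all of $\cP_o^n$, completing the induction.
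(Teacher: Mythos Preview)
This paper does not itself prove Theorem~\ref{th: main0}; the result is quoted from the authors' earlier work \cite{habpar13}. What the present paper does (in Theorem~\ref{th: P^n_o SL(n)-invariant}) is strengthen the conclusion by weakening upper semicontinuity to measurability, and the proof given here is explicitly a patch: it states that the argument of \cite{habpar13} uses upper semicontinuity only once, in \cite[Lemma~3.6]{habpar13}, and then supplies a replacement for that single step under the weaker hypothesis. From that replacement one can read off the flavour of the original argument: the decisive step reduces to showing that a certain antisymmetric function $G\colon \R^2\to\R$, subject to a family of translation identities, must vanish, and this is accomplished by manipulations that terminate in Cauchy's functional equation.

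Your outline takes a genuinely different route. The paper (and, by inference, \cite{habpar13}) does not reduce to origin-interior simplices parametrised by barycentric invariants; instead it works with \emph{double pyramids} $[B,-c e_n,d e_n]$ over a base $B\in\cP^{n-1}_o$ (cf.\ Ludwig's reduction, Theorem~\ref{th: determined by values on SL(n)(R^n)}) and derives functional equations in the heights $c,d$ and in $B$. The double-pyramid route has the practical advantage that the resulting equations are essentially one- and two-variable and collapse cleanly to Cauchy-type identities; your simplex route would require controlling a symmetric function of $n+1$ barycentric coordinates together with a volume parameter, and you correctly flag the hard step --- extracting enough independent equations to force $f$ to be an affine combination of $1$, $V$, and $V(\cdot^*)$ --- as not yet carried out. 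One small correction to your orbit description: volume together with the unordered barycentric tuple determines the simplex only up to $\VLn$, not $\SLn$, since the matching linear map may have determinant $-1$; you would need to account for this sign separately.
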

Here, $\chi$ is the Euler characteristic, $V$ denotes $n$-dimensional volume
and $P^*$ is the polar body of $P$ (see Section \ref{prelim} for details).

The impact of Theorem \ref{th: main0} to the Orlicz-Brunn-Minkowski theory is revealed if one combines
it with a deep result of Ludwig and Reitzner \cite{LR10} on affine surface areas.
In this way, the authors \cite{habpar13} obtained the following centro-affine Hadwiger theorem:
A map $\mu \colon \cK_o^n \to \R$ is an upper semicontinuous and $\SLn$-invariant valuation
if and only if there exist constants $k_0, k_1, k_2 \in \R$ and a function $\varphi \in \textnormal{Conc}(\R_+)$ such that
\[
	\mu (K) = k_0 \, \chi (K) + k_1 \, V(K) + k_2 \, V(K^*) + \Omega_{\varphi} (K)
\]
for all $K \in \cK_o^n$.
The Orlicz affine surface areas $\Omega_{\varphi}$ were discovered only recently
and we refer to \cite{habpar13} for a precise definition of these functionals and the set $\textnormal{Conc}(\R_+)$.

The aim of this article is to establish the analog of Theorem \ref{th: main0} for vector valued valuations.
In particular, we strengthen previous characterizations by Ludwig \cite{Ludwig:moment}.
Whereas in the scalar case the natural compatibility with the special linear group is given by $\SLn$-invariance,
the appropriate notion in the vector case is $\SLn$-covariance.
A map $\mu \colon \cP_o^n\to\R^n$ is called $\SLn$-covariant if
\begin{equation}\label{covariant}
	\mu (\phi P) = \phi \, \mu (P)
\end{equation}
for all $P \in \cP_o^n$ and each $\phi \in \SLn$.
Our main result is the following theorem.
\begin{theorem}\label{th: main1}
	Let $n \geq 3$.
	A map $\mu \colon \cP_o^n\to\R^n$ is a measurable and $\SLn$-covariant valuation
	if and only if there exists a constant $k \in \R$ such that
	\[
		\mu (P) = k \, m(P)
	\]
	for all $P \in \cP_o^n$.
\end{theorem}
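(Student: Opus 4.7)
The ``if'' direction is routine: the moment vector $m(P)=\int_P x\,dx$ is additive on dissections (by additivity of Lebesgue integration), transforms as $m(\phi P)=\phi\,m(P)$ for $\phi\in\SLn$ (change of variables with unit Jacobian), and is continuous on $\cP_o^n$ in the Hausdorff metric, hence measurable. So every $k\,m$ qualifies.

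For the converse, I would mimic the approach of the scalar Theorem \ref{th: main0}, with care for the vector-valued and merely measurable setting. The first move is a reduction to simplices with a vertex at the origin: a polytope $P\in\cP_o^n$ dissects naturally into simplices $\operatorname{conv}(\{0\}\cup F)$ over its facets $F$, but each such simplex has the origin as a vertex and so is not in $\cP_o^n$. One extends $\mu$ to a suitable class of polytopes with the origin in their closure by means of the valuation identity applied to dissections that do lie in $\cP_o^n$, and checks that the extension is well-defined, $\SLn$-covariant, and measurable. The reduction then shows that $\mu$ is determined by its values on simplices $[0,v_1,\ldots,v_n]$ having one vertex at the origin.

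Next, exploit symmetry on a reference simplex. For $T=[0,e_1,\ldots,e_n]$, the $\SLn$-stabilizer of $T$ contains all permutation matrices of determinant one, i.e.\ the alternating group $A_n$ acting on coordinates. For $n\ge 3$ the $A_n$-fixed subspace of $\R^n$ is spanned by $\mathbf{1}:=e_1+\cdots+e_n$, and since $m(T)=\tfrac{1}{(n+1)!}\mathbf{1}$, there is a unique $k\in\R$ with $\mu(T)=k\,m(T)$. To propagate this equality, act by diagonal matrices $\phi_\lambda=\operatorname{diag}(\lambda,\lambda^{-1},1,\ldots,1)\in\SLn$ to sweep out a one-parameter family of simplices, and dissect them so that the valuation identity combined with $\SLn$-covariance yields a Cauchy-type functional equation for the scalar coefficient $c(\lambda)$ of $\mu(\phi_\lambda T)$ relative to $m(\phi_\lambda T)$. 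Measurability of $\mu$ descends to $c$, and Darboux--Steinhaus then forces $c$ to be the linear solution. Iterating this argument across coordinate pairs, which is where $n\ge 3$ is used, gives $\mu=k\,m$ on every simplex with a vertex at the origin; the reduction of the first step then propagates the identity to all of $\cP_o^n$.

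The main obstacle I expect is the first step: extending $\mu$ coherently beyond $\cP_o^n$ while preserving the valuation property and the measurability hypothesis demands careful bookkeeping as to which dissections stay entirely inside $\cP_o^n$, especially because the ``missing'' pieces sit on the boundary of the domain. A secondary, more analytic obstacle is the application of Darboux--Steinhaus, which requires parameterising the coefficient $c(\lambda)$ so that it cleanly inherits measurability from $\mu$; this is subtler in the vector case than in the scalar setting of Theorem \ref{th: main0}, since one must simultaneously track that $\mu(\phi_\lambda T)$ remains proportional to a fixed direction as $\lambda$ varies.
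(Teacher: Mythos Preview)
Your ``if'' direction is fine, but the converse sketch has two genuine gaps. First, the extension of $\mu$ from $\cP_o^n$ to simplices with the origin as a vertex is never carried out. You correctly flag this as the main obstacle, and it really is one: with only measurability (no continuity) available there is no limiting procedure, and the dissection bookkeeping you allude to would already require knowing $\mu$ on pieces that lie outside $\cP_o^n$. Second, the Cauchy step is misdirected. Once the $A_n$-stabilizer forces $\mu(T)=k\,m(T)$, covariance alone gives $\mu(\phi_\lambda T)=\phi_\lambda\,\mu(T)=k\,m(\phi_\lambda T)$ for every $\phi_\lambda\in\SLn$, so your coefficient $c(\lambda)$ is identically $k$ and there is no functional equation to solve; the ``iteration across coordinate pairs'' is likewise vacuous, since the whole $\SLn$-orbit of $T$ is handled in one stroke. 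What covariance does \emph{not} give you is the value of $\mu$ on simplices of a different volume, i.e.\ on $\lambda T$ with $\lambda\,\mathrm{Id}\notin\SLn$, and your argument never reaches that case.

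The paper circumvents both problems by never leaving $\cP_o^n$. Instead of simplices with a vertex at the origin it works with double pyramids $[B,-c\binom{x}{1},d\binom{y}{1}]$ over bases $B\in\cP_o^{n-1}$, which contain the origin in their interiors; Ludwig's reduction (Theorem~\ref{th: determined by values on SL(n)(R^n)}) says a valuation vanishing on $\SLn$-images of these vanishes everywhere. The classification then proceeds by induction on the dimension, but not in isolation: the covariant vector problem is coupled, via polarity $P\mapsto P^*$, to the \emph{contravariant} vector problem and to a $\GLn$-covariant \emph{matrix}-valued classification (Theorems~\ref{th: P^n_o SL(n)-contravariant} and~\ref{th: P^n_o GL(n)-covariant matrix-valued}), and all four are established simultaneously. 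Cauchy-type equations do appear, but they come from varying the apex parameters $x,y\in\R^{n-1}$ of the double pyramid while the base is fixed (see Lemmas~\ref{le: f^I independent} and~\ref{le: induction R^n_o SL(n)-contravariant vanishes}), not from acting by $\SLn$ on a reference simplex.
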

The vector $m(P)$ in Theorem \ref{th: main1} is the moment vector of the polytope $P \in \cP_o^n$.
For each $P \in \cP_o^n$, it is defined by
\[
	m(P) = \int_P x \, dx .
\]
Up to volume normalization, the moment vector coincides with the center of gravity of $P$.
This makes it a basic notion in mechanics, engineering, physics and geometry.

In dimension two, the situation is different.
In contrast to Theorem \ref{th: main1},
the vector space of measurable and $\SLn[2]$-covariant valuations turns out to be two-dimensional.
Indeed, if we denote by $\rho_{\frac \pi 2}$ the counter-clockwise rotation of $\R^2$ about the angle $\frac \pi 2$,
then we will prove the following result.
\begin{theorem}\label{th: main1 2d}
A map $\mu \colon \cP_o^2 \to \R^2$ is a measurable and $\SLn[2]$-covariant valuation
if and only if there exist constants $k_1, k_2 \in \R$ such that
\[
	\mu (P) = k_1 \, m(P) + k_2 \, \rho_{\frac \pi 2} \, m (P^*)
\]
for all $P \in \cP_o^2$.
\end{theorem}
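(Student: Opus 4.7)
The first step is to verify that both candidate maps are measurable $\SLn[2]$-covariant valuations. The change of variables formula together with $\det \phi = 1$ yields $m(\phi P) = \phi\, m(P)$. For the polar term, $(\phi P)^* = \phi^{-T} P^*$ gives $m((\phi P)^*) = \phi^{-T} m(P^*)$, so the required covariance reduces to the genuinely two-dimensional identity $\rho_{\frac \pi 2} \phi^{-T} \rho_{\frac \pi 2}^{-1} = \phi$ for every $\phi \in \SLn[2]$ (equivalent to $\SLn[2] = \operatorname{Sp}(2,\R)$), which is a direct $2 \times 2$ matrix computation. That $P \mapsto m(P^*)$ is a valuation on $\cP_o^2$ is standard.

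\textbf{Necessity, strategy.} Let $\mu \colon \cP_o^2 \to \R^2$ be a measurable $\SLn[2]$-covariant valuation. My plan is to mirror the proof of Theorem \ref{th: main1} while tracking the two-dimensional peculiarities. I would first show that $\mu$ on a symmetric triangle must vanish. Take $T_0 = \mathrm{conv}\{e_1, e_2, -e_1 - e_2\}$; its stabilizer in $\SLn[2]$ contains the order-three rotation $\phi_0 = \ptwotwo{0}{1}{-1}{-1}$, whose characteristic polynomial $t^2 + t + 1$ has no real root. Covariance then gives $\mu(T_0) = \phi_0 \mu(T_0)$, forcing $\mu(T_0) = 0$. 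By $\SLn[2]$-covariance this determines $\mu$ on the entire $\SLn[2]$-orbit of $T_0$.

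\textbf{Reduction to a functional equation.} Next I would parameterize the remaining triangles with the origin in their interior, up to $\SLn[2]$, by a one-parameter family $\{T_\lambda\}_{\lambda > 0}$ obtained by deforming $T_0$ via a diagonal $\SLn[2]$-element. Applying the valuation identity to pairs $T_\lambda, T_{\lambda'}$ glued along an edge through the origin (after a small perturbation so that all pieces lie in $\cP_o^2$), combined with $\SLn[2]$-covariance, yields a Cauchy-type functional equation for the two coordinate functions of $\mu(T_\lambda)$. Using measurability, its solution space is two-dimensional and coincides with the span of $m$ and $\rho_{\frac \pi 2} m(\cdot^*)$ on this family. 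Finally, I would extend from triangles to all of $\cP_o^2$ by triangulating an arbitrary polytope through the origin and invoking the valuation property.

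\textbf{Main obstacle.} The essential point distinguishing the two-dimensional case from Theorem \ref{th: main1} is precisely the identity $\rho_{\frac \pi 2} \phi^{-T} \rho_{\frac \pi 2}^{-1} = \phi$, which produces the second independent $\SLn[2]$-covariant valuation $\rho_{\frac \pi 2} m(\cdot^*)$. In dimensions $n \geq 3$ no such identity is available and one recovers a single-parameter family. The hardest part is therefore to prove that the functional equation for $\mu(T_\lambda)$ admits exactly this two-dimensional solution space, and to handle the extension step cleanly: triangulations from the origin produce simplices with the origin on their boundary rather than in their interior, so the valuation identity must be applied along perturbed triangulations and the result must be shown independent of the perturbation, leveraging measurability.
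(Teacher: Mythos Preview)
Your sufficiency argument is fine, and the identity $\rho_{\frac\pi2}\phi^{-t}\rho_{\frac\pi2}^{-1}=\phi$ is indeed the key two-dimensional feature. The necessity sketch, however, has a genuine gap at its core. The moduli space of triangles in $\cP_o^2$ modulo $\SLn[2]$ is three-dimensional (six vertex coordinates, three-dimensional group), not one-dimensional; your proposed family $T_\lambda$ obtained by applying a diagonal $\SLn[2]$-element to $T_0$ lies entirely in the $\SLn[2]$-orbit of $T_0$, so by covariance $\mu(T_\lambda)=\phi_\lambda\mu(T_0)=0$ for every $\lambda$ and you have gained no information beyond what the stabilizer argument already gave. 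A single Cauchy-type equation in one real variable cannot pin $\mu$ down on a genuinely three-parameter family of orbits. The gluing step is also ill-posed as stated: if two triangles share an edge through the origin, neither piece lies in $\cP_o^2$, so the valuation identity is not available there, and a perturbation argument would have to be made precise before measurability can help.

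For contrast, the paper does not work with triangles at all. It first splits $\mu=\mu^++\mu^-$ into an $\VLn[2]$-covariant and an $\VLn[2]$-signum-covariant part, treats $\mu^-$ by passing to the polar, and then analyzes $\mu^+$ on the four-parameter family of straight double pyramids $[I,J]$ using the one-dimensional classification (Theorems~\ref{th: dim 1 even} and~\ref{th: dim 1 odd}) in each slot. This is extended to skew double pyramids $\cR^2$ via a Cauchy argument and an auxiliary functional equation whose only measurable solutions are $F(r)=kr^2+\tilde k$ (Lemma~\ref{le: R^2 VL(2)-covariant}). The passage from $\cR^2$ to all of $\cP_o^2$ is not done by triangulation but by invoking Ludwig's extension theorem (Theorem~\ref{th: determined by values on SL(n)(R^n)}), which is precisely the nontrivial ingredient your perturbation idea is trying to substitute for.
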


As mentioned before, Ludwig \cite{Ludwig:moment} was the first who obtained classifications in this centro-affine framework.
However, she assumed covariance with respect to the whole general linear group.
Theorems \ref{th: main1} and \ref{th: main1 2d} do not need homogeneity assumptions at all.
In fact, since the moment vector is homogeneous, Theorem \ref{th: main1} shows that $\SLn$-covariance implies homogeneity.
We remark that prior to Ludwig's work,
a Hadwiger type theorem for vector valued valuations was established by Schneider under different assumptions.
We refer to \cite{Sch13} and the references therein for more information on this subject.

The results of this article can be regarded as the first step
towards a complete classification of $\SLn$-covariant tensor valuations.
Such tensor valuations were recently investigated from different perspectives,
see e.g. \cite{AleBerSchu, Hugschsch08, Lud03, wannerer13, Ludwig:moment, hugsch13, berhu14}.
Maps $\mu \colon \cP_o^n \to (\R^n)^{\otimes p}$ that naturally intertwine the special linear group $\SLn$, i.e.\
\begin{equation}\label{covarianttensor}
	\mu \circ \phi = \phi^{\otimes p} \, \mu
\end{equation}
for all $\phi \in \SLn$, are called $\SLn$-covariant.
If one identifies $(\R^n)^{\otimes 1}$ with $\R^n$ in the trivial way,
then clearly the two notions (\ref{covariant}) and (\ref{covarianttensor}) of $\SLn$-covariance correspond to each other.
So Theorems \ref{th: main1} and \ref{th: main1 2d} classify
all measurable and $\SLn$-covariant valuations $\mu \colon \cP_o^n \to (\R^n)^{\otimes 1}$.

To prove our main results we will need a generalization of Theorem \ref{th: main0},
which is interesting in its own right.
We will prove in this article that the assumption on upper semicontinuity can be weakened.
In fact, in Section \ref{prelim} we establish the following theorem which shows that measurability is sufficient.

\begin{theorem}\label{th: main2}
	Let $n \geq 2$.
	A map $\mu \colon \cP_o^n\to\R$ is a measurable and $\SLn$-invariant valuation
	if and only if there exist constants $k_0, k_1, k_2 \in \R$ such that
	\[
		\mu (P) = k_0 \, \chi (P) + k_1 \, V(P) + k_2 \, V(P^*)
	\]
	for all $P \in \cP_o^n$.
\end{theorem}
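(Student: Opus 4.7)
The natural strategy is to reduce Theorem \ref{th: main2} to Theorem \ref{th: main0} by showing that measurability, combined with $\SLn$-invariance and the valuation property, automatically delivers the regularity used in the proof of the upper semicontinuous case. Following the standard template for classification theorems on $\cP_o^n$, I would first reduce $\mu$ to its values on a one-parameter family of canonical polytopes. By decomposing a polytope $P \in \cP_o^n$ into cones over its facets at the origin and triangulating those cones, then invoking $\SLn$-invariance and the valuation identity, $\mu(P)$ is seen to be built from Euler-characteristic contributions and from evaluations on simplices of the form $\lambda T^n$, where $T^n = \operatorname{conv}\{0, e_1, \dots, e_n\}$ and $\lambda > 0$. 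All relevant information is thus captured by the real function $f(\lambda) := \mu(\lambda T^n)$, together with the constant $\mu(\{0\})$.

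Next I would verify that $f$ is Borel measurable: since $\lambda \mapsto \lambda T^n$ is continuous from $(0,\infty)$ to $\cP_o^n$ with its Hausdorff topology, $f$ is the composition of a continuous map with a Borel measurable one. Using suitable dissections of $(\lambda+\tau) T^n$ and of intersections involving a fixed polar hyperplane, and applying the valuation property, one obtains functional identities relating $f(\lambda+\tau)$, $f(\lambda)$, $f(\tau)$, $V(\lambda T^n)$, $V((\lambda T^n)^*)$ and $\chi$. After subtracting off the contributions proportional to $\chi$, $V$ and $P \mapsto V(P^*)$ for an appropriate choice of candidate constants $k_0, k_1, k_2$, the problem reduces to showing that a measurable solution of a Cauchy-type equation on the multiplicative group $(0,\infty)$, compatible with the homogeneity degrees $0$, $n$ and $-n$, must vanish identically.

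The main obstacle is precisely this final step, which is of a classical flavour: by the Fréchet-Sierpiński theorem, every Lebesgue measurable solution of Cauchy's functional equation is linear. After a logarithmic reparametrization adapted to the multiplicative structure, this theorem forces the residual part of $f$ to vanish, so the desired formula $k_0\, \chi(P) + k_1\, V(P) + k_2\, V(P^*)$ holds on all simplices and hence, by the reduction in the first step, on all of $\cP_o^n$. The converse direction, that $\chi$, $V$ and $P \mapsto V(P^*)$ are measurable $\SLn$-invariant valuations, is standard; the only mildly nontrivial point is measurability of $P \mapsto V(P^*)$, which follows from continuity of polarity on $\cP_o^n$.
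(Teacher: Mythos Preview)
Your reduction step has a genuine gap: the simplices $\lambda T^n = \lambda\,\operatorname{conv}\{0,e_1,\dots,e_n\}$ have the origin as a vertex, not in their interior, so they do not belong to $\cP_o^n$ and $\mu$ is simply not defined on them. The same applies to $\{0\}$. The decomposition of $P\in\cP_o^n$ into cones over its facets with apex at the origin produces pieces outside the domain of $\mu$, and there is no a priori reason a valuation on $\cP_o^n$ extends to such pieces. This domain issue is precisely why the paper (and the earlier work it cites) is forced to work with double pyramids $\cR^n$ and $\cQ^n$, which \emph{do} contain the origin in their interiors, and to use Ludwig's reduction (Theorem~\ref{th: determined by values on SL(n)(R^n)}) rather than a naive simplex triangulation.

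The paper's own route is different in character: it observes that the proof of the upper semicontinuous Theorem~\ref{th: main0} in \cite{habpar13} invokes upper semicontinuity at exactly one place (their Lemma~3.6), and then shows that at that specific step measurability suffices. Concretely, the problem is isolated to proving that a measurable antisymmetric function $G\colon\R^2\to\R$ satisfying $G(s,0)=G(0,s)=G(s,-s)=0$ and a certain translation-invariance condition must vanish; this is handled by reducing to Cauchy's functional equation. Your intuition that Cauchy's equation is the key tool is therefore right, but the functional equation arises from the two-parameter structure already built in \cite{habpar13}, not from a one-parameter family $f(\lambda)=\mu(\lambda T^n)$.
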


	\section{Notation and preliminary results}\label{prelim}
We equip $\R^n$ with the standard Euclidean product and fix an orthonormal basis.
The corresponding basis vectors are denoted by $e_1, \ldots, e_n$.

We will often identify $\R^{n-1}$ with $e_n^\perp$.
With respect to this identification, $a_*$ will denote the first $n-1$ coordinates of a vector $a \in \R^n$.
The subscript $\cdot_{\, *}$ will also be used to denote the first $n-1$ coordinates of other objects.
For example, suppose that a matrix $A \in \R^{n \times n}$ is given.
Then $A_{**} \in \R^{(n-1) \times (n-1)}$ is obtained from $A$ by deleting its $n$-th row and $n$-th column.
Similarly, $A_{*n} \in \R^{n-1}$ is obtained from the $n$-th column of $A$ by deleting its $n$-th coordinate.
A superscript $\cdot \, '$ will denote the $(n-1)$-dimensional version of an $n$-dimensional object.
For example, $V'$ will denote the $(n-1)$-dimensional volume.

When talking about a measurable function between topological spaces,
we always understand the notion of measurability with respect to their Borel $\sigma$-algebras.

The well-known solution of Cauchy's functional equation will be one of the main ingredients in our proofs.
Suppose that $f \colon \R^n \to \R$ is a measurable function such that
\begin{equation}\label{eq: Cauchy}
	f(s + t) = f(s) + f(t)
\end{equation}
for all $s,t \in \R^n$.
Then $f$ has to be linear.
If $f$ maps to $\R^m$ instead of $\R$, then the corresponding statement also holds,
which can be easily deduced from the case $m=1$.
In the case $n=1$, it is actually enough that \eqref{eq: Cauchy} holds for positive real numbers.

Let $\cK^n$ denote the set of convex bodies, i.e.\ nonempty compact convex subsets of $\R^n$.
As usual, $\cK^n$ will be equipped with the Hausdorff metric.
We write $\cP^n \subseteq \cK^n$ for the subspace of convex polytopes.
The space $\cP^{n-1}$ will be repeatedly identified with convex polytopes that are contained in $e_n^\perp$.

In order to keep formulas easy to read, we will use the following conventions.
The convex hull of $P_1 \cup \ldots \cup P_m$ will be denoted by $[P_1, \ldots, P_m]$, where $P_1, \ldots, P_m \in \cP^n$.
Whenever a set contains only one point, we will omit the curly brackets in the above notation.
In particular, $E_1, \ldots, E_n$ denote the line segments $[-e_1, e_1], \ldots, [-e_n, e_n]$.
If a map $\mu$ is applied to $[P_1, \ldots, P_m]$, then we will usually write $\mu [P_1, \ldots, P_m]$ 
instead of $\mu \left( [P_1, \ldots, P_m] \right)$.

In the introduction we already used the concept of polar bodies.
The polar body $K^* \in \cK^n_o$ of a convex body $K \in \cK^n_o$ is defined by
\[
	K^* = \{ y \in \R^n : y^t  x \leq 1 \text{ for all } x \in K \} .
\]
Recall that the map $K \mapsto K^*$ is a continuous involution on $\cK^n_o$.
Moreover, for bodies $K , L \in \cK^n_o$ such that $K \cup L$ is convex we have
\begin{equation}\label{valuation polar bodies}
	( K \cup L )^* = K^* \cap L^*
	\quad \textrm{ and } \quad
	( K \cap L )^* = K^* \cup L^* .
\end{equation}
If $\phi \in \GLn$, then
\begin{equation}\label{contravariance polarity}
	(\phi K )^* = \phi^{-t} K^* ,
\end{equation}
where $\phi^{-t}$ denotes the inverse of the transpose of $\phi$.
We refer the reader to \cite{Gar95} and \cite{Sch13} for proofs of these facts.

The following symbols will have a fixed meaning throughout this article.
The letters $a, b, c, d$ will always denote positive real numbers
with associated line segments $I := [-a e_1, b e_1]$ and $J := [-c e_n, d e_n]$, respectively.
The letters $x, y$ will always denote elements of $\R^{n-1}$.
In particular, for $n = 2$ we have $J = [-c e_2, d e_2]$ and $x,y \in \R$.
The letter $B$ will always denote an element of $\cP^{n-1}_o$.

For $n = 2$, we say that $a, b, c, d, x, y$ form a double pyramid if
\[
	\left[ I, -c \ptwo x 1, d \ptwo y 1 \right] \cap e_2^\perp = I .
\]
For $n \geq 3$, we say that $B, c, d, x, y$ form a double pyramid if
\[
	\left[ B, -c \ptwo x 1, d \ptwo y 1 \right] \cap e_n^\perp = B .
\]
If $x = y = 0$, then we call the double pyramid straight.
The set of double pyramids will be denoted by $\cR^n$
and the set of straight double pyramids by $\cQ^n$.
Using a slightly different notation, the next theorem was proved by Ludwig in \cite{Lud02advval}.

\begin{theorem}\label{th: determined by values on SL(n)(R^n)}
	Let $n \geq 2$.
	Assume that $\mu \colon \cP^n_o \to \R^n$ is a valuation which vanishes on all $\SLn$-images of elements in $\cR^n$.
	Then $\mu$ vanishes everywhere.
\end{theorem}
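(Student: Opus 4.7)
The strategy is to show by induction on a suitable complexity measure of $P \in \cP_o^n$ (for instance the number of vertices) that $\mu(P) = 0$, using the valuation identity
\[
    \mu(K \cup L) + \mu(K \cap L) = \mu(K) + \mu(L)
\]
to reduce $P$ to combinations of $\SLn$-images of double pyramids. A first useful observation is that, although the definition of $\cR^n$ looks quite special, every full-dimensional double pyramid in $\R^n$ whose base lies in a hyperplane through the origin and whose two apices lie on opposite sides is an $\SLn$-image of an element of $\cR^n$, so the hypothesis provides vanishing on a very rich class.

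The inductive step is then carried out as follows. Given $P \in \cP_o^n$ above the base complexity, the plan is to find $K, L \in \cP_o^n$ with $K \cup L = P$ and $K \cap L \in \cP_o^n$ such that $K$, $L$, and $K \cap L$ each have strictly smaller complexity than $P$. By induction each of $\mu(K), \mu(L), \mu(K \cap L)$ then vanishes, and the valuation identity forces $\mu(P) = 0$. Such a decomposition is constructed by choosing a hyperplane $H$ through the origin and two double pyramids with base a large subpolytope of $P \cap H$ and apices chosen so that their union covers $P$.

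For the base case one must handle polytopes of minimal complexity in $\cP_o^n$ not already lying in $\SLn \cdot \cR^n$, the critical example being $n$-simplices $S$ containing the origin in their interiors (which are not double pyramids once $n \geq 3$). The plan here is to exhibit two double pyramids $D_1, D_2 \in \SLn \cdot \cR^n$ and auxiliary pieces in $\cP_o^n$ of smaller complexity such that one or two applications of the valuation identity express $\mu(S)$ in terms of $\mu$-values on $\SLn$-images of $\cR^n$, which vanish by assumption.

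The main obstacle is maintaining the condition $0 \in \operatorname{int}(\cdot)$ for every polytope appearing in the valuation identities: a naive hyperplane cut through the origin produces an $(n-1)$-dimensional slice that falls outside $\cP_o^n$, so the decompositions cannot be obtained from simple cuts. Instead one has to arrange overlapping double pyramids that share full-dimensional intersections around the origin, and keeping this "thickness" consistent while reducing the combinatorial complexity is the technical heart of the argument.
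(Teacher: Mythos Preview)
This theorem is not proved in the paper; it is quoted from Ludwig \cite{Lud02advval}, so any comparison is to her original argument. Your outline has the right shape and correctly isolates the chief difficulty---every polytope occurring in a valuation relation must lie in $\cP_o^n$, so hyperplane cuts through the origin are forbidden---but both decisive steps are left open.

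For the inductive step you assert the existence of $K, L \in \cP_o^n$ with $K \cup L = P$, $K \cap L \in \cP_o^n$, and all three strictly simpler than $P$, without constructing them. For a polygon with at least five vertices this can indeed be arranged (delete two vertices lying outside a fixed triangle containing the origin), but not for triangles, and not for quadrilaterals whose diagonals both miss the origin---and the latter are \emph{not} $\SLn[2]$-images of elements of $\cR^2$, so they too belong to the base case. Your fallback suggestion, covering $P$ by two double pyramids over a base in $P \cap H$, cannot give $K \cup L = P$ once $P$ has more than one vertex on some side of $H$, since a double pyramid contributes only one apex per side. For the base case you only announce an intention; the natural attempt for a triangle $T$ (surrounding it by quadrilaterals in $\SLn[2] \cdot \cR^2$ built by pushing edge-points outward along rays from the origin) produces valuation identities that are mutually consistent but do not determine $\mu(T)$. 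In short, the obstacle you flag in your final paragraph is precisely the substance of the theorem, and the proposal does not overcome it; Ludwig's proof supplies a specific reduction mechanism (not of the direct $P = K \cup L$ form you sketch) that you would need to reproduce or replace.
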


The notion of $\SLn$-covariance has already been introduced in the previous section.
However, we need additional terminology for functions which intertwine the special linear group.
A map $\mu \colon \cP^n_o \to \R^n$ is called $\SLn$-contravariant if
\[
	\mu (\phi P) = \phi^{-t} \, \mu (P)
\]
for all $P \in \cP^n_o$ and each $\phi \in \SLn$.
If the domain of $\mu$ is only a subset of $\cP^n_o$ and not necessarily closed under the action of $\SLn$,
we require the covariance respectively contravariance property to hold
only for those combinations of $P$ and $\phi$ that make sense.

The notions of $\SLn$-covariance and $\SLn$-contravariance are closely related to each other.
Indeed, from relation \eqref{contravariance polarity} we deduce the following.
If $P \mapsto \mu (P)$ is $\SLn$-covariant, then $P \mapsto \mu (P^*)$ is $\SLn$-contravariant.
Vice versa, if $P \mapsto \mu (P)$ is $\SLn$-contravariant, then $P \mapsto \mu (P^*)$ is $\SLn$-covariant.

The group of all volume preserving linear maps, i.e.\ those with determinant $1$ or $-1$, will be denoted by $\VLn$.
A map $\mu \colon \cP^n_o \to \R^n$ is called $\VLn$-covariant if
\begin{equation}\label{eq: definition VL(n)-covariant}
	\mu (\phi P) = \phi \, \mu (P)
\end{equation}
for all $P \in \cP^n_o$ and each $\phi \in \VLn$.
We say that a map is $\VLn$-signum-covariant if
\begin{equation}\label{eq: definition VL(n)-signum-covariant}
	\mu (\phi P) = (\det \phi) \, \phi \, \mu (P)
\end{equation}
for all $P \in \cP^n_o$ and each $\phi \in \VLn$.
Again, if the domain of $\mu$ is only a subset of $\cP^n_o$ and not necessarily closed under the action of $\VLn$,
we require \eqref{eq: definition VL(n)-covariant} respectively \eqref{eq: definition VL(n)-signum-covariant} to hold
only for those combinations of $P$ and $\phi$ that make sense.

Let us give one example for each of the last two concepts.
From the transformation behaviour of integrals with repect to linear maps,
it is easy to see that the moment vector $m$ is $\VLn$-covariant.
In the plane, a simple calculation shows that the map $P \mapsto \rho_{\frac \pi 2} m (P^*)$
is $\VLn[2]$-signum-covariant.

Assume that $\mu \colon \cP^n_o \to \R^n$ is a measurable valuation which is $\SLn$-covariant
and let $\theta \in \VLn \setminus \SLn$.
For all $P \in \cP^n_o$ define
\begin{equation}\label{eq: definition mu^+}
	\mu^+ (P) = \frac 1 2 \left( \mu (P) + \theta \, \mu (\theta^{-1} P) \right)
\end{equation}
and
\begin{equation}\label{eq: definition mu^-}
	\mu^- (P) = \frac 1 2 \left( \mu (P) - \theta \, \mu (\theta^{-1} P) \right) .
\end{equation}
The $\SLn$-covariance of $\mu$ implies that these definitions do not depend on the choice of $\theta$.
Clearly, $\mu^+$ and $\mu^-$ are measurable valuations.
Moreover, it is easy to see that $\mu^+$ is $\VLn$-covariant and $\mu^-$ is $\VLn$-signum-covariant.
Obviously,
\begin{equation}\label{eq: mu = mu^+ + mu^-}
	\mu = \mu^+ + \mu^- .
\end{equation}

In order to establish our main result in dimensions greater or equal than three, we need a generalization
of Ludwig's characterization \cite{Lud03} of matrix valued valuations.
Before we can formulate her theorem, we have to collect some more definitions.
A map $\mu \colon \cP^n_o \to \R^{n \times n}$ is called $\GLn$-covariant if
\begin{equation}
	\mu (\phi P) = |\det \phi| \, \phi \, \mu (P) \, \phi^t
\end{equation}
for all $P \in \cP^n_o$ and each $\phi \in \GLn$.
It is called $\GLn$-contravariant if
\begin{equation}
	\mu (\phi P) = |\det \phi|^{-1} \, \phi^{-t} \, \mu (P) \, \phi^{-1}
\end{equation}
for all $P \in \cP^n_o$ and each $\phi \in \GLn$.
The operator $M_2 \colon \cP^n_o \to \R^{n \times n}$ defined by
\[
	M_2(P) = \int_P x \, x^t \, dx
\]
for $P \in \cP^n_o$ calculates the moment matrix of the polytope $P$.
Clearly, it is a measurable valuation that is $\GLn$-covariant.
Now, we are in a position to formulate the already mentioned theorem,
which is a special case of a result by Ludwig \cite{Lud03}.
\begin{theorem}\label{th: P^n_o GL(n)-covariant matrix-valued symmetric}
	Assume that $\mu \colon \cP^n_o \to \R^{n \times n}$ is a measurable valuation which is $\GLn$-covariant.
	Furthermore, assume that $\mu (P)$ is a symmetric matrix for all $P \in \cP^n_o$.
	Then there exists a $k \in \R$ such that
	\[
		\mu (P) = k \, M_2(P)
	\]
	for all $P \in \cP^n_o$.
\end{theorem}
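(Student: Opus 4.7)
The plan is to adapt Ludwig's original argument from \cite{Lud03} with a single modification: every appeal to upper semicontinuity is replaced by one to measurability, since upper semicontinuity is used in her proof solely to solve Cauchy's equation \eqref{eq: Cauchy}, and measurable solutions of \eqref{eq: Cauchy} are already linear. Observe first that the moment matrix $M_2$ is itself a measurable, $\GLn$-covariant, symmetric-matrix-valued valuation, so the theorem is really asserting that the space of such valuations is one-dimensional.

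The first step is a normalization. Consider the cross-polytope $C = [E_1, \ldots, E_n]$, whose stabilizer in $\VLn$ contains every signed permutation matrix. By $\GLn$-covariance, $\sigma \, \mu(C) \, \sigma^t = \mu(C)$ for every such $\sigma$; since the only symmetric matrices invariant under conjugation by all signed permutations are scalar multiples of the identity (the hyperoctahedral group acts irreducibly on $\R^n$), one has $\mu(C) = \alpha I$ and analogously $M_2(C) = \beta I$ with $\beta > 0$. Set $k := \alpha/\beta$. Then $\tilde\mu := \mu - k M_2$ is again a measurable, $\GLn$-covariant, symmetric-matrix-valued valuation, and additionally $\tilde\mu(C) = 0$; it therefore suffices to prove $\tilde\mu \equiv 0$.

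The second step reduces to double pyramids. Apply Theorem \ref{th: determined by values on SL(n)(R^n)} componentwise to each of the $n^2$ entries of $\tilde\mu$; the cited theorem is valuation-theoretic and so extends verbatim to valuations with values in any abelian group. Combined with the $\SLn$-covariance of $\tilde\mu$ (which follows from $\GLn$-covariance, since $|\det\phi| = 1$ on $\SLn$), this reduces the problem to showing $\tilde\mu$ vanishes on every double pyramid $D = [B, -c(x,1), d(y,1)] \in \cR^n$. Parameterize $D$ by $(B, c, d, x, y)$. Applying $\GLn$-covariance with diagonal matrices $\mathrm{diag}(1,\ldots,1,t)$ pins down the dependence on $(c,d)$ up to explicit homogeneity factors in each matrix entry; covariance with shears stabilizing $e_n^\perp$ controls the apex offsets $(x,y)$; and the valuation property applied to decompositions of the base $B = B_1 \cup B_2$ produces a Cauchy-type additivity in $B$. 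Measurability of $\mu$ promotes each resulting measurable additive function to a linear one, and matching the linear forms against the normalization $\tilde\mu(C) = 0$ forces every free coefficient to vanish. Hence $\tilde\mu \equiv 0$ on $\cR^n$, which by the reduction above gives $\mu = k M_2$.

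The main obstacle is the final step: producing enough valuation decompositions of double pyramids to reduce the problem to a closed system of Cauchy equations, and then verifying that the resulting system is solvable using only measurability. These decompositions are already present (often only implicitly) in Ludwig's upper-semicontinuous argument, so our real work is to check that each of her solvability deductions needs nothing more than the measurable solution theory for \eqref{eq: Cauchy}.
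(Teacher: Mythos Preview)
The paper does not give its own proof of this statement; it explicitly introduces Theorem~\ref{th: P^n_o GL(n)-covariant matrix-valued symmetric} as ``a special case of a result by Ludwig \cite{Lud03}'' and uses it as a black box. In particular, Ludwig's classification in \cite{Lud03} is already stated for Borel measurable valuations, so your framing---that one must go back into her argument and replace appeals to upper semicontinuity by measurability---rests on a false premise. There is nothing to adapt: the theorem as stated is quoted, not proved, and the measurability hypothesis is exactly the one Ludwig works under.

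Setting that aside, your outline (normalize at the cross-polytope, reduce to $\cR^n$ via Theorem~\ref{th: determined by values on SL(n)(R^n)}, then exploit diagonal and shear covariance together with valuation splittings to obtain Cauchy equations) is indeed the skeleton of Ludwig's argument and of the analogous computations the present paper carries out in Lemmas~\ref{le: Q^2 VL(2)-covariant}--\ref{le: R^2 VL(2)-covariant} and \ref{le: R^n_o GL(n)-covariant matrix-valued vanishes}. But your step three is not a proof, only a description of the kind of work that has to be done: you never write down which decompositions you use, which Cauchy systems arise, or how the normalization $\tilde\mu(C)=0$ actually kills the remaining constants. Since you explicitly defer all of this to ``Ludwig's argument,'' the proposal is circular as a stand-alone proof---it amounts to ``cite \cite{Lud03},'' which is precisely what the paper does.
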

As was mentioned before, we will need an even stronger version of this theorem.
In fact, we have to remove the symmetry assumption.
That this is possible will be shown in the following sections.

Finally, we will now prove Theorem \ref{th: main2} from the introduction.
Note that one part of the \lq if and only if\rq-statement in Theorem \ref{th: main2} is trivial.
Here, and also in the rest of the article, we will therefore only prove the nontrivial parts of the statements from the introduction.

\begin{theorem}\label{th: P^n_o SL(n)-invariant}
	Let $n \geq 2$.
	Assume that $\mu \colon \cP^n_o \to \R$ is a measurable valuation which is $\SLn$-invariant.
	Then there exist constants $k_0, k_1, k_2 \in \R$ such that
	\[
		\mu (P) = k_0 \chi (P) + k_1 V(P) + k_2 V(P^*)
	\]
	for all $P \in \cP^n_o$.
\end{theorem}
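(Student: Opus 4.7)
The plan is to closely follow the proof of Theorem \ref{th: main0} from \cite{habpar13}, replacing each use of upper semicontinuity by the measurable solution of Cauchy's functional equation announced in Section \ref{prelim}. No new geometric input is needed; only the functional-analytic passage from additivity to linearity has to be weakened from upper semicontinuous to measurable, and this weakening is exactly the content of Cauchy's theorem.

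More explicitly, I would first reduce to the straight double pyramids $[B,-ce_n,de_n]$ with $B\in\cP_o^{n-1}$ and $c,d>0$: this is a scalar analog of Theorem \ref{th: determined by values on SL(n)(R^n)}, combined with $\SLn$-shears that straighten oblique pyramids. Fixing $B$, the valuation property applied to horizontal decompositions---together with $\SLn$-diagonal maps that rescale the $e_n$-direction at the cost of a matching action on $B$---produces additive identities for the function $f_B(c,d):=\mu[B,-ce_n,de_n]$. After splitting off the polar contribution via \eqref{valuation polar bodies}, these identities reduce to Cauchy equations in a single real variable; measurability of $\mu$ and continuity of the parametrizations transfer measurability to these auxiliary functions, so Cauchy's theorem forces them to be linear, yielding
\[
f_B(c,d) \;=\; \alpha(B) + \beta(B)\,(c+d) + \gamma(B)\Bigl(\tfrac{1}{c}+\tfrac{1}{d}\Bigr)
\]
for some measurable $\SLn[n-1]$-invariant valuations $\alpha,\beta,\gamma$ on $\cP_o^{n-1}$.

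Induction on the dimension (with base case $n=2$ handled by the same Cauchy argument for segments $[-ae_1,be_1]\in\cP_o^1$) then identifies $\alpha,\beta,\gamma$ as linear combinations of $\chi$, $V'$ and $V'(\cdot^{\,*})$. Matching coefficients via the explicit double-pyramid volume formula and its polar analog pins down constants $k_0,k_1,k_2$ for which $\mu-k_0\,\chi-k_1\,V-k_2\,V(\cdot^{\,*})$ vanishes on all straight double pyramids, hence on all of $\cP_o^n$ by the first-step reduction. The main obstacle is the bookkeeping needed to extract a Cauchy equation in a single real variable from the two-parameter function $f_B(c,d)$; this is the same geometric juggling of $\SLn$-shears and diagonals carried out in \cite{habpar13}, with no new difficulty in the measurable category beyond checking Borel measurability of the relevant restrictions of $\mu$.
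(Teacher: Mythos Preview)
Your high-level strategy matches the paper's: reuse the argument of \cite{habpar13} and replace the single appeal to upper semicontinuity (localized there in Lemma~3.6) by an appeal to the measurable Cauchy theorem. Where your outline falls short is in the claim that ``these identities reduce to Cauchy equations in a single real variable'' and that there is ``no new difficulty in the measurable category beyond checking Borel measurability''. The paper's proof shows that this reduction is precisely the nontrivial content.

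Concretely, after running the \cite{habpar13} machinery one is left not with a Cauchy equation but with a measurable antisymmetric function $G\colon\R^2\to\R$ satisfying $G(s,0)=G(0,s)=G(s,-s)=0$ together with a diagonal translation invariance of the four-term alternating sum
\[
G(t+r,v-r)-G(s+r,v-r)+G(s+r,u-r)-G(t+r,u-r).
\]
This is not a Cauchy equation, and in the upper semicontinuous setting of \cite{habpar13} it was dispatched by a different argument. The paper's proof introduces the auxiliary ``rectangle function'' $H([s,t],[u,v])=G(t,v)-G(s,v)+G(s,u)-G(t,u)$, and then combines the diagonal invariance with the boundary conditions to manufacture an odd symmetry $G(s,-v)=-G(s,v)$; only after this does one extract a genuine one-variable Cauchy equation $t\mapsto H([0,t],[0,v])$, whence measurability forces linearity and $G(v,v)=0$ forces $G\equiv 0$. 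Your sketch treats this step as bookkeeping inherited from \cite{habpar13}, but it is new work specific to the measurable case and is the heart of the proof.
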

\begin{proof}
	The proof of Theorem \ref{th: main0} from \cite{habpar13}
	uses the upper semicontinuity only at one point in \cite[Lemma 3.6]{habpar13}.
	To weaken the assumption of upper semicontinuity to measurability
	at this particular point, it suffices to show the following.

	Let $G \colon \R^2 \to \R$ be a measurable and antisymmetric function such that
	\begin{equation}\label{eq: G 0}
		G(s, 0) = G(0, s) = G(s, -s) = 0
	\end{equation}
	for all $s \in \R$.
	Moreover, suppose that for $s, t, u, v \in \R$ the quantity
	\begin{equation}\label{eq: G r independent}
	  G(t + r, v - r) - G(s + r, v - r) + G(s + r, u - r) - G(t + r, u - r)
	\end{equation}
	is independent of $r \in\R$.
	We want to show that each $G$ with these properties must vanish on $\R^2$.
	
	Associated with such a function $G$, define another function $H \colon \cP^1 \times \cP^1 \to \R$ by
	\[
		H([s, t], [u, v]) = G(t, v) - G(s, v) + G(s, u) - G(t, u)
	\]
	for all $s, t, u, v \in \R$.
	One can think of $H$ as a function on rectangles parallel to the coordinate axes.
	$G$ can be viewed as a function on the corners of those rectangles.
	Note that on the right hand side of the last equation the corners of this rectangle
	are traversed counter-clockwise.
	From \eqref{eq: G r independent} we obtain
	\begin{equation}\label{eq: P^n_o SL(n)-invariant - H diagonal}
		H([s, t] + r, [u, v] - r) = H([s, t], [u, v])
	\end{equation}
	for all $r, s, t, u, v \in \R$.
	Plugging $s = 0$, $u = 0$ and $r = v$ into \eqref{eq: P^n_o SL(n)-invariant - H diagonal}, we obtain
	\[
		H([v, t + v], [-v, 0]) = H([0, t], [0, v]) ,
	\]
	which, by \eqref{eq: G 0}, simplifies to
	\begin{equation}\label{eq: P^n_o SL(n)-invariant - -G(t + v, -v) = G(t, v)}
		-G(t + v, -v) = G(t, v) .
	\end{equation}
	Plugging $t = -v$, $u = 0$ and $r = v - s$ into \eqref{eq: P^n_o SL(n)-invariant - H diagonal} and using
	\eqref{eq: G 0} again, we obtain
	\[
		H([v, -s], [s - v, s]) = H([s, -v], [0, v]) .
	\]
	In terms of $G$ the last equality reads as
	\[
		-G(v, s) + G(v, s - v) - G(-s, s - v) = -G(s, v) .
	\]
	By the antisymmetry of $G$ and \eqref{eq: P^n_o SL(n)-invariant - -G(t + v, -v) = G(t, v)}
	this is equivalent to
	\[
		G(s, -v) = -G(s, v) .
	\]
	A glance at the definition of $H$ reveals that therefore
	\[
		H([s, t], -[u, v]) = -H([s, t], [u, v]) .
	\]
	Combining this with \eqref{eq: P^n_o SL(n)-invariant - H diagonal} we obtain
	\[
		H([s, t] + r, [u, v] + r) = H([s, t], [u, v]) ,
	\]
	which, in combination with \eqref{eq: P^n_o SL(n)-invariant - H diagonal}, yields
	\[
		H([s, t] + 2 r, [u, v]) = H([s, t], [u, v]) .
	\]
	In particular, this equality implies that
	\begin{equation}\label{eq: P^n_o SL(n)-invariant - H horizontal}
		H([s, s + t], [u, v]) = H([0, t], [u, v]) .
	\end{equation}
	Using the definition of $H$ in terms of $G$ it is easy to check that
	\[
		H([0, s + t], [u, v]) = H([0, s], [u, v]) + H([s, s + t], [u, v]) .
	\]
	Now \eqref{eq: P^n_o SL(n)-invariant - H horizontal} implies that
	$t \mapsto G(t, v) = H([0, t], [0, v])$ satisfies Cauchy's functional equation for every $v > 0$.
	Since $G$ is measurable, this function is linear.
	But since $G(v, v) = 0$, $G$ vanishes on $\R^2$.
\end{proof}

	\section{Proof of the Main Results}
\subsection{The $1$-dimensional case}

In dimension one, a map $\mu \colon \cP^1_o \to \R$ is even if and only if it
is $\VLn[1]$-signum-covariant.
Similarly, $\mu$ is odd if and only if it is $\VLn[1]$-covariant.
Consequently, representation \eqref{eq: mu = mu^+ + mu^-} corresponds to
the standard decomposition of $\mu$ into its even and odd part.
The following two theorems classify all even and odd valuations $\mu \colon \cP^1_o \to \R$, respectively.
By decomposition \eqref{eq: mu = mu^+ + mu^-} these results give a complete picture of such valuations.
Recall that for $a,b,c,d > 0$, we denote by $I$ and $J$ the line segments
$I = [-a e_1, b e_1]$ and $J = [-c e_n, d e_n]$.

\begin{theorem}\label{th: dim 1 even}
	Assume that $\mu \colon \cP^1_o \to \R$ is an even valuation.
	Then, for all $a,b > 0$,
	\[
		\mu (I) = F(a) + F(b) ,
	\]
	where $F(r) := \frac 1 2 \mu [-r, r]$, $r \in (0, \infty)$.
	Moreover, if $\mu$ is $q$-homogeneous for some $q \in \R$, then
	\[
		\mu (I) = \mathrm{const} \cdot \left( a^q + b^q \right)
	\]
	for all $a,b > 0$.
\end{theorem}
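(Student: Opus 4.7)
The plan is to parametrize segments in $\cP^1_o$ by the pair $a,b>0$ and to set $f(a,b) := \mu[-a e_1, b e_1]$. Since $-[-a e_1, b e_1] = [-b e_1, a e_1]$ and $\mu$ is even, $f$ is symmetric: $f(a,b) = f(b,a)$. Writing $F(r) = \frac{1}{2} f(r,r)$, my goal reduces to showing $f(a,b) = F(a) + F(b)$, from which the $q$-homogeneous refinement will follow at once.

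The core step is to apply the valuation identity to two overlapping segments whose union and intersection both remain in $\cP^1_o$. For any $a \geq s > 0$ and $b \geq r > 0$, set
\[
K := [-a e_1, r e_1] \quad \text{and} \quad L := [-s e_1, b e_1].
\]
Then $K \cup L = [-a e_1, b e_1]$ and $K \cap L = [-s e_1, r e_1]$, and all four segments lie in $\cP^1_o$ since their boundary coordinates are strictly positive. The valuation property yields
\[
f(a,b) + f(s,r) = f(a,r) + f(s,b).
\]
Specializing to $a = b$ and $s = r$ (with $a \geq s$) collapses this to $f(a,a) + f(s,s) = f(a,s) + f(s,a)$, and the evenness identity $f(a,s) = f(s,a)$ then gives $2 f(a,s) = 2F(a) + 2F(s)$. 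Hence $f(a,s) = F(a) + F(s)$ for $a \geq s$, and a second application of evenness extends this to all $a,s > 0$.

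For the homogeneity claim, suppose $\mu$ is $q$-homogeneous. Then
\[
F(\lambda r) = \tfrac{1}{2}\mu\bigl(\lambda [-r e_1, r e_1]\bigr) = \lambda^q F(r)
\]
for all $\lambda, r > 0$, so taking $r = 1$ gives $F(r) = F(1)\, r^q$ and the stated formula follows with constant $F(1) = \frac{1}{2}\mu[-e_1, e_1]$. I do not expect a serious obstacle here: the argument is elementary and essentially combinatorial. The only real choice is picking the pair $(K,L)$ above so that both $K\cap L$ and $K\cup L$ stay in $\cP^1_o$, which is what forces the ordering hypotheses $a \geq s$ and $b \geq r$; symmetry of $f$ then recovers the general case for free.
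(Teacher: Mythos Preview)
Your proof is correct. The paper defers the first identity to \cite{habpar13} and only spells out the homogeneity step, which you reproduce verbatim; your argument for the first part---applying the valuation relation to two overlapping intervals and then specializing to symmetric ones---is the standard one and is essentially what the cited reference contains.
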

\begin{proof}
	We refer to \cite{habpar13} for a proof of the first part of the statement.
	If $\mu$ is $q$-homogeneous, then
	\[
	  F(r) = \frac 1 2 \, \mu [-r, r] = r^q \, \frac 1 2 \, \mu [-1, 1] = r^q F(1).
	\]
	This immediately proves the second claim.
\end{proof}

\begin{theorem}\label{th: dim 1 odd}
	Assume that $\mu \colon \cP^1_o \to \R$ is an odd valuation.
	Then, for all $a,b > 0$,
	\[
		\mu (I) = F(b) - F(a) ,
	\]
	where $F(r) := \mu [-1, r]$, $r \in (0, \infty)$.
	Moreover, if $\mu$ is $q$-homogeneous for some $q \in \R \setminus \{ 0 \}$, then
	\[
		\mu (I) = \mathrm{const} \cdot \left( b^q - a^q \right)
	\]
	for all $a,b > 0$.
	If $\mu$ is $0$-homogeneous and measurable, then
	\[
		\mu (I) = \mathrm{const} \cdot \ln \left( \frac b a \right)
	\]
	for all $a,b > 0$.
\end{theorem}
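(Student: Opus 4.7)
The plan is to exploit the valuation property together with oddness to get the additive decomposition $\mu(I) = F(b) - F(a)$, then upgrade this to the explicit homogeneous forms via a functional equation argument.

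First I would observe that oddness forces $\mu[-1,1] = 0$, since $-[-1,1] = [-1,1]$ implies $\mu[-1,1] = -\mu[-1,1]$. Next I would apply the valuation property to the pair $K = [-a,1]$, $L = [-1,b]$ (in the case $a,b \geq 1$): then $K \cup L = [-a,b]$ and $K \cap L = [-1,1]$, so
\[
    \mu[-a,b] + \mu[-1,1] = \mu[-a,1] + \mu[-1,b] .
\]
Combining this with $\mu[-1,1] = 0$ and the oddness identity $\mu[-a,1] = -\mu[-1,a]$, which follows since $-[-a,1] = [-1,a]$, yields $\mu[-a,b] = F(b) - F(a)$. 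The remaining cases $a<1\leq b$, $b<1\leq a$ and $a,b < 1$ are handled by the same trick, after swapping which of the pair $(K,L)$ is larger so that $K \cup L$ and $K \cap L$ remain in $\cP^1_o$; I would simply run through the three sub-cases to verify that the identity persists.

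For the second part, assume $\mu$ is $q$-homogeneous. Applying the derived formula to $\mu[-\lambda, \lambda b]$ and simultaneously using $\mu[-\lambda, \lambda b] = \lambda^q \mu[-1,b] = \lambda^q F(b)$, I obtain the functional equation
\[
    F(\lambda b) = F(\lambda) + \lambda^q F(b)
\]
for all $\lambda, b > 0$. Swapping $\lambda$ and $b$ and subtracting gives $F(\lambda)(1 - b^q) = F(b)(1 - \lambda^q)$. For $q \neq 0$ this immediately forces $F(r) = c(1 - r^q)$ for some constant $c$, hence $\mu(I) = F(b) - F(a) = \mathrm{const}\cdot(b^q - a^q)$.

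For the logarithmic case $q = 0$, the same functional equation collapses to $F(\lambda b) = F(\lambda) + F(b)$, i.e.\ a multiplicative Cauchy equation on $(0,\infty)$. Setting $g(t) := F(e^t)$ gives the additive Cauchy equation $g(s+t) = g(s) + g(t)$. Since $\mu$ is measurable and $r \mapsto [-1,r]$ is continuous into $\cP^1_o$, the function $F$, and therefore $g$, is measurable. By the solution of Cauchy's equation recalled in Section~\ref{prelim}, $g$ is linear, so $F(r) = k \ln r$ and $\mu(I) = k\,\ln(b/a)$. The only real obstacle is the case-splitting in the first step; the functional-equation analysis is then routine, with the measurability hypothesis serving its standard role of ruling out the pathological solutions of Cauchy's equation in the $0$-homogeneous case.
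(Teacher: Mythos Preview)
Your proof is correct and follows essentially the same route as the paper's: the paper cites \cite{habpar13} for the decomposition $\mu(I)=F(b)-F(a)$, while you supply the direct valuation--oddness argument, and your derivation of the functional equation $F(st)=F(s)+s^qF(t)$ and its resolution for $q\neq 0$ and $q=0$ match the paper's proof verbatim. One small simplification: the case-splitting in your first step is unnecessary, since with $K=[-a,1]$ and $L=[-1,b]$ both intervals contain the origin, hence overlap, and so $K\cup L$ and $K\cap L$ are automatically intervals in $\cP^1_o$ regardless of whether $a,b$ lie above or below $1$.
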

\begin{proof}
	The proof of the first assertion can be found in \cite{habpar13}.
	In order to establish the part on the homogeneity we follow \cite{Lud02advval}.
	Let $\mu$ be $q$-homogeneous, $q \in \R$.
	By the first part of the theorem we obtain, for all $s,t > 0$,
	\[
		F(st) - F(s) = \mu [-s, st] = s^q \mu [-1, t] = s^q F(t) ,
	\]
	or, equivalently,
	\begin{equation} \label{eq: F(st) one dimension}
		F(st) = F(s) + s^q F(t) .
	\end{equation}
	For $q = 0$, the map $r \mapsto F(\exp(r))$ satisfies Cauchy's functional equation.
	If $\mu$ is measurable, we conclude $F(r) = \mathrm{const} \cdot \ln(r)$.
	For $q \neq 0$, we switch $s$ and $t$ in \eqref{eq: F(st) one dimension} and obtain
	\[
		F(s) + s^q F(t) = F(t) + t^q F(s) .
	\]
	Setting $t = 2$ and rearranging terms yields
	\[
		F(s) = \left( 1 - 2^q \right)^{-1} F(2) \left( 1 - s^q \right) ,
	\]
	which completes the proof.
\end{proof}

\subsection{The $2$-dimensional case}

Let $\mu \colon \cQ^2 \to \R^2$ be a valuation.
We say that $\mu$ splits over pyramids if there is a map $\muTr$ such that
\[
	\mu [I, J] = \muTr [I, -c e_2] + \muTr [I, d e_2] .
\]
In other words, the value of $\mu$ on the straight double pyramid $[I, J]$
is the sum of the values of $\muTr$ on the lower straight pyramid $[I, -c e_2]$ and the upper straight pyramid $[I, d e_2]$.
Our first result classifies $\VLn[2]$-covariant valuations on $\cQ^2$ and reveals that such valuations split over pyramids.

\begin{lemma}\label{le: Q^2 VL(2)-covariant}
	Assume that $\mu \colon \cQ^2 \to \R^2$ is an $\VLn[2]$-covariant valuation.
	Then
	\[
		\renewcommand{\arraystretch}{1.5}
		\mu [I, J] =
		\begin{pmatrix}
				- \frac 1 c & \frac 1 c & - \frac 1 d & \frac 1 d \\
				- \frac 1 a & - \frac 1 b & \frac 1 a & \frac 1 b \\
		\end{pmatrix}
		\cdot
		\begin{pmatrix}
				F(ac) \\
				F(bc) \\
				F(ad) \\
				F(bd) \\
		\end{pmatrix}
	\]
	for all $a,b,c,d > 0$, where $F(r) := \frac 1 2 \mu_1 [-e_1, r e_1, E_2]$, $r \in (0, \infty)$.
	In particular, $\mu$ splits over pyramids with
	\begin{equation}\label{spec. splitting over pyramid}
		\renewcommand{\arraystretch}{1.5}
		\muTr [I, -c e_2] :=
		\begin{pmatrix}
				\frac 1 c ( F(bc) - F(ac) ) \\
				- \frac 1 a F(ac) - \frac 1 b F(bc) \\
		\end{pmatrix}
		\quad \textrm{ and } \quad
		\muTr [I, d e_2] :=
		\begin{pmatrix}
				\frac 1 d ( F(bd) - F(ad) ) \\
				\frac 1 a F(ad) + \frac 1 b F(bd) \\
		\end{pmatrix} .
	\end{equation}
\end{lemma}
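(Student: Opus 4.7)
Throughout I write $f(a, b, c, d) := \mu[I, J]$ with components $f_1, f_2$. The plan has three stages: first, the valuation property yields a four-term additive decomposition of $f$; second, coordinate-reflection covariance reduces this to an antisymmetric/symmetric pattern governed by two two-variable functions; third, a quarter-turn rotation together with the diagonal scalings in $\SLn[2]$ collapse everything to a single univariate $F$.

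For Stage 1, I would pair $[I, J]$ with the two straight double pyramids $K_1 := [I, [-\epsilon e_2, de_2]]$ and $K_2 := [I, [-ce_2, \epsilon e_2]]$ for $0 < \epsilon < \min(c, d)$. One checks directly that $K_1 \cup K_2 = [I, J]$ and $K_1 \cap K_2 = [I, [-\epsilon e_2, \epsilon e_2]]$, with all four sets in $\cQ^2$, so the valuation relation gives
\[
f(a,b,c,d) - f(a,b,\epsilon,d) - f(a,b,c,\epsilon) + f(a,b,\epsilon,\epsilon) = 0.
\]
Vanishing of this mixed difference in $(c, d)$ forces $f(a, b, c, d) = u(a, b, c) + v(a, b, d)$; the analogous vertical split along $e_2^\perp$ yields $f = p(a, c, d) + q(b, c, d)$, and direct comparison of the two representations produces
\[
f(a,b,c,d) = A(a, c) + B(b, c) + C(a, d) + D(b, d).
\]

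For Stage 2, the two coordinate flips $\ptwotwo{-1}{0}{0}{1}$ and $\ptwotwo{1}{0}{0}{-1}$ lie in $\VLn[2]$ and send $[I, J]$ to straight double pyramids with parameters $(b, a, c, d)$ and $(a, b, d, c)$. Covariance imposes $f_1(b, a, c, d) = -f_1$, $f_2(b, a, c, d) = f_2$, together with the mirror relations in $(c, d)$. Feeding these into the four-term decomposition and applying the same mixed-difference principle componentwise — absorbing at each step a one-variable shift that lies in the kernel of the formula — produces functions $\Phi, \Psi$ with
\[
f_1(a, b, c, d) = \Phi(a, c) + \Phi(a, d) - \Phi(b, c) - \Phi(b, d)
\]
and
\[
f_2(a, b, c, d) = \Psi(a, c) + \Psi(b, c) - \Psi(a, d) - \Psi(b, d).
\]

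For Stage 3, the quarter-turn $\rho := \ptwotwo{0}{-1}{1}{0} \in \SLn[2]$ sends $[I, J]$ to the straight double pyramid with parameters $(c, d, b, a)$ and yields $f_1(c, d, b, a) = f_2(a, b, c, d)$; substituting Stage 2's formulas and again invoking the mixed-difference trick collapses this to $\Psi(x, y) = \Phi(y, x)$. Finally, the scaling $\ptwotwo{\lambda}{0}{0}{\lambda^{-1}} \in \SLn[2]$ gives $f_1(a\lambda, b\lambda, c/\lambda, d/\lambda) = \lambda f_1$, which translates to $\Phi(x\lambda, y/\lambda) = \lambda \Phi(x, y)$ up to an additive term depending only on $(y, \lambda)$; such a term can be absorbed by replacing $\Phi(x, y)$ by $\Phi(x, y) - \sigma(y)$ without changing $f_1$ or $f_2$. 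Setting $\lambda = y$ in the resulting homogeneous relation yields $\Phi(x, y) = \Phi(xy, 1)/y$, so $\Phi(x, y) = -F(xy)/y$ where $F(r) := -\Phi(r, 1)$. Plugging back produces the matrix formula of the lemma; the specialization $(a, b, c, d) = (1, r, 1, 1)$ identifies $F(r) = \frac{1}{2} \mu_1[-e_1, re_1, E_2]$, and grouping the four summands by whether they involve $c$ or $d$ delivers the splitting over pyramids with $\muTr$ as claimed.

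The hardest part is the bookkeeping of the one-variable shifts that the covariance arguments leave free at each step of Stages 2 and 3: these shifts all lie in the kernel of the bilinear patterns expressing $f_1$ and $f_2$, but I would need to verify that they can be reabsorbed consistently, so that $\Phi$ can simultaneously be put in homogeneous form and satisfy $\Psi(x, y) = \Phi(y, x)$ with no residual obstruction.
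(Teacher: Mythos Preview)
Your approach is different from the paper's and, while the outline is plausible, it leaves exactly the gap you identify at the end. The paper's argument is much shorter: it treats $\mu_1[I,J]$ as a one-dimensional valuation in each argument separately, observes that $\VLn[2]$-covariance makes it odd in $I$ and even in $J$, and then applies the one-dimensional classifications (Theorems~\ref{th: dim 1 even} and~\ref{th: dim 1 odd}) directly. The even result on $J$ gives $\mu_1[I,J] = \tfrac12\mu_1[I,cE_2] + \tfrac12\mu_1[I,dE_2]$; the diagonal map $\ptwotwo{1/c}{0}{0}{c} \in \SLn[2]$ converts the first summand to $\tfrac{1}{2c}\mu_1[cI, E_2]$ (and similarly with $d$); and the odd result on the remaining interval variable yields $F(bc)-F(ac)$. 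The second component is handled symmetrically with a function $G$, and a single application of the coordinate swap identifies $G=F$. No four-term decomposition and no shift bookkeeping are needed.

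Your Stages~1--2 essentially re-derive the content of Theorems~\ref{th: dim 1 even} and~\ref{th: dim 1 odd} inside this proof. Stage~3 is where the genuine difficulty sits, and it is not merely bookkeeping: the scaling relation you obtain has the form $\Phi(x\lambda, y/\lambda) = \lambda\Phi(x,y) + \alpha_\lambda(x) + \beta_\lambda(y)$ with error terms that depend on $\lambda$, and you need a single $\lambda$-independent modification of $\Phi$ that kills them for \emph{all} $\lambda$ simultaneously while remaining compatible with the rotation constraint $\Psi(x,y)=\Phi(y,x)$. That is a cocycle-type condition requiring its own argument, not an obvious absorption. The paper sidesteps the issue entirely by applying the specific $\SLn[2]$-scaling \emph{before} invoking the odd one-dimensional result, so that the dependence on the product $ac$ is automatic and nothing has to be absorbed.
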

\begin{proof}
	Clearly, the first component $\mu_1 [I, J]$ is a valuation in each of its arguments.
	By the $\VLn[2]$-covariance it is odd in $I$ and even in $J$.
	Using Theorem \ref{th: dim 1 even}, the $\SLn[2]$-covariance and Theorem \ref{th: dim 1 odd},
	as well as the definition of $F$, we obtain
	\begin{align*}
		\mu_1 [I, J]
		&= \frac 1 2 \mu_1 [I, c E_2] + \frac 1 2 \mu_1 [I, d E_2] \\
		&= \frac 1 2 \mu_1 \left( \ptwotwo {\frac 1 c} {0} {0} {c} [c I, E_2] \right) +
		\frac 1 2 \mu_1 \left( \ptwotwo {\frac 1 d} {0} {0} {d} [d I, E_2] \right) \\
		&= \frac 1 {2c} \mu_1 [c I, E_2] + \frac 1 {2d} \mu_1 [d I, E_2] \\
		&= \frac 1 c \left( F(bc) - F(ac) \right) + \frac 1 d \left( F(bd) - F(ad) \right) .
	\end{align*}
	Similarly, the second component $\mu_2 [I, J]$ is a valuation in each of its arguments,
	but it is even in $I$ and odd in $J$.
	By the same arguments as before, we arrive at
	\[
		\mu_2 [I,J] = \frac 1 a \left( G(ad) - G(ac) \right) + \frac 1 b \left( G(bd) - G(bc) \right) ,
	\]
	where $G(r) := \frac 1 2 \mu_2 [E_1, -e_2, r e_2]$.
	Finally, by the $\VLn[2]$-covariance, $G = F$.
\end{proof}

We will now consider maps $\mu \colon \cR^2 \to \R^2$.
In order to obtain a complete classification of $\VLn[2]$-covariant measurable valuations on $\cR^2$,
we first need to establish some preliminary results.

\begin{lemma}\label{le: f^I independent}
	Assume that $\mu \colon \cR^2 \to \R^2$ is an $\SLn[2]$-covariant valuation which splits over pyramids.
	Then, for $a,b > 0$ and $x,y \in \R$, the function
	\[
		f^I(x, y) := \mu \left[ I, -c \ptwo x 1, d \ptwo y 1 \right]
		- \ptwotwo 1 0 x 1 \muTr [I, -c e_2]
		- \ptwotwo 1 0 y 1 \muTr [I, d e_2] ,
	\]
	is independent of $c,d > 0$ as long as $a, b, c, d, x, y$ form a double pyramid.
	Moreover,
	\begin{equation} \label{eq: f^I(x, y) one}
		f^I(x, y) = f^I(x, 0) + f^I(0, y)
	\end{equation}
	and
	\begin{equation} \label{eq: f^I(x, y) two}
		f^I(x, y) = \ptwotwo 1 0 y 1 f^I(x-y, 0) .
	\end{equation}
\end{lemma}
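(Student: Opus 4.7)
The plan is to establish the three claims in the order (\eqref{eq: f^I(x, y) two}), the independence of $c,d$, and (\eqref{eq: f^I(x, y) one}).

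For the shear identity (\eqref{eq: f^I(x, y) two}), set $\phi_t := \ptwotwo{1}{0}{t}{1} \in \SLn[2]$. Since $\phi_t$ fixes $e_1$, it pointwise fixes $I$, and a direct matrix check gives $\phi_y^{-1}(-c\ptwo{x}{1}) = -c\ptwo{x-y}{1}$ together with $\phi_y^{-1}(d\ptwo{y}{1}) = d e_2$. The $\SLn[2]$-covariance of $\mu$ therefore yields
\[
\mu\!\left[I,\, -c\ptwo{x}{1},\, d\ptwo{y}{1}\right]
\;=\; \phi_y\,\mu\!\left[I,\, -c\ptwo{x-y}{1},\, d e_2\right].
\]
Substituting this into the definition of $f^I(x,y)$ and using the addition law $\phi_y \phi_{x-y} = \phi_x$ for such upper triangular shears immediately produces $f^I(x,y) = \phi_y\,f^I(x-y,0)$, which is (\eqref{eq: f^I(x, y) two}). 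Evaluating at $x=y=0$ and invoking the splitting over pyramids furnishes $f^I(0,0)=0$.

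For the independence of $c,d$, the shear identity reduces matters to the case $y=0$. Using the splitting to trade $\muTr[I,-c e_2]$ for $\mu[I,-c e_2,d e_2]-\muTr[I,d e_2]$, and then applying $\SLn[2]$-covariance via $\phi_x$ (which carries $[I,-c e_2, d e_2]$ to $[I,-c\ptwo{x}{1}, d\ptwo{x}{1}]$), the desired independence is reformulated as the statement that the difference $\mu[I,-c_1\ptwo{x}{1},U]-\mu[I,-c_2\ptwo{x}{1},U]$ takes the same value for $U=d e_2$ and $U=d\ptwo{x}{1}$, together with the analogous statement when $d$ is varied. Such insensitivity to the upper apex has to be extracted from the only genuine family of valuation identities available inside $\cR^2$, namely variation of the base segment: for $a_1 \le a_2$ and $b_1 \ge b_2$ one has $[I_1,L,U]\cup[I_2,L,U]=[I_1\cup I_2,L,U]$ and $[I_1,L,U]\cap[I_2,L,U]=[I_1\cap I_2,L,U]$, with all four polytopes still in $\cR^2$. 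Interlacing these ``horizontal'' identities with further shears then delivers the claim.

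For the additivity (\eqref{eq: f^I(x, y) one}), combining the shear identity with $f^I(0,y)=\phi_y f^I(-y,0)$ recasts it as the cocycle relation
\[
f^I(x+z,0) \;=\; f^I(z,0) \;+\; \phi_z\,f^I(x,0),
\]
which is then obtained by the same interplay of $\SLn[2]$-covariance and base-segment valuation identities applied to $[I,-c\ptwo{x+z}{1},d e_2]$, using $\phi_z\phi_x=\phi_{x+z}$ once more and the $(c,d)$-independence already established.

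The main obstacle lies squarely in the middle step. The set $\cR^2$ admits essentially no useful valuation identities beyond the trivial inclusion ones and those produced by varying the base segment $I$; most other pairs of double pyramids have a union or intersection that fails to remain a double pyramid. Consequently, the bulk of the technical work consists in transferring the available ``$I$-valuation'' information into the ``vertical'' statement about $c,d$-independence through a careful use of the shears $\phi_t$ supplied by $\SLn[2]$-covariance.
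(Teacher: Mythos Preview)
Your derivation of \eqref{eq: f^I(x, y) two} via the shear $\phi_y=\ptwotwo{1}{0}{y}{1}$ is fine and matches the paper. The problem is everything after that, and it stems from a false premise.

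You assert that ``$\cR^2$ admits essentially no useful valuation identities beyond the trivial inclusion ones and those produced by varying the base segment $I$.'' This is wrong, and it is exactly the missing idea. Apex-varying (``vertical'') valuation identities are available and are what the paper actually uses. Concretely: fix $a,b,c,d,x,y$ forming a double pyramid and take $s,t>0$ small. Then
\[
A:=\left[I,\,-c\ptwo{x}{1},\,t\ptwo{y}{1}\right],\qquad
B:=\left[I,\,-s\ptwo{y}{1},\,d\ptwo{y}{1}\right]
\]
are both in $\cR^2$, and one checks directly (by looking at the halves above and below $e_2^\perp$) that
\[
A\cup B=\left[I,\,-c\ptwo{x}{1},\,d\ptwo{y}{1}\right],\qquad
A\cap B=\left[I,\,-s\ptwo{y}{1},\,t\ptwo{y}{1}\right],
\]
both again in $\cR^2$. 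Applying the valuation identity to this pair, then using $\SLn[2]$-covariance via $\phi_y$ on the two ``straight'' terms and the splitting over pyramids, shows immediately that $\mu[I,-c\ptwo{x}{1},d\ptwo{y}{1}]-\phi_y\,\muTr[I,d e_2]$ is independent of $d$ (and symmetrically for $c$). That is the whole argument for the first claim; no base-segment identities are needed. The additivity \eqref{eq: f^I(x, y) one} is obtained the same way from the pair $A=[I,-c\ptwo{x}{1},r e_2]$, $B=[I,-r e_2,d\ptwo{y}{1}]$ with small $r$, whose union is the full double pyramid and whose intersection is $[I,-r e_2,r e_2]$.

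Because you ruled these identities out, your steps for $c,d$-independence and for \eqref{eq: f^I(x, y) one} never become actual arguments: ``interlacing horizontal identities with shears'' and ``the same interplay'' are placeholders, and I do not see any way to extract the vertical statement from base-segment variation alone. The gap is not a detail to be filled in; it is the core mechanism of the proof.
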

\begin{proof}
	For given $a, b, x, y$ choose $c$ and $d$ so small that
	$a, b, c, d, x, y$ form a double pyramid.
	By the valuation property we have
	\[
		\mu \left[ I, -c \ptwo x 1, d \ptwo y 1 \right] + \mu \left[ I, -s \ptwo y 1, t \ptwo y 1 \right] =
		\mu \left[ I, -c \ptwo x 1, t \ptwo y 1 \right] + \mu \left[ I, -s \ptwo y 1, d \ptwo y 1 \right]
	\]
	for sufficiently small $s,t > 0$.
	The $\SLn[2]$-covariance implies
	\begin{multline*}
		\mu \left[ I, -c \ptwo x 1, d \ptwo y 1 \right] + \ptwotwo 1 0 y 1 \mu [ I, -s e_2, t e_2] = \\
		\mu \left[ I, -c \ptwo x 1, t \ptwo y 1 \right] + \ptwotwo 1 0 y 1 \mu [ I, -s e_2, d e_2] .
	\end{multline*}
	Since $\mu$ splits over pyramids, this simplifies to
	\[
		\mu \left[ I, -c \ptwo x 1, d \ptwo y 1 \right] + \ptwotwo 1 0 y 1 \muTr [I, t e_2] =
		\mu \left[ I, -c \ptwo x 1, t \ptwo y 1 \right] + \ptwotwo 1 0 y 1 \muTr [I, d e_2].
	\]
	We conclude that the expression
	\[
		\mu \left[ I, -c \ptwo x 1, d \ptwo y 1 \right] - \ptwotwo 1 0 y 1 \muTr [I, d e_2]
	\]
	is independent of $d$.
	Similarly we see that the expression
	\[
		\mu \left[ I, -c \ptwo x 1, d \ptwo y 1 \right] - \ptwotwo 1 0 x 1 \muTr [I, -c e_2]
	\]
	is independent of $c$.
	This proves that $f^I$ is indeed well defined.
	For sufficiently small $r > 0$, the valuation property implies that
	\[
		\mu \left[ I, -c \ptwo x 1, d \ptwo y 1 \right] + \mu [ I, -r e_2, r e_2] =
		\mu \left[ I, -c \ptwo x 1, r e_2 \right] + \mu \left[ I, -r e_2, d \ptwo y 1 \right] .
	\]
	Since $f^I(0, 0) = 0$, this proves \eqref{eq: f^I(x, y) one}.
	The $\SLn[2]$-covariance and the definition of $f^I$ yield
	\begin{align*}
		\mu \left[ I, -c \ptwo x 1, d \ptwo y 1 \right]
		&= \ptwotwo 1 0 y 1 \mu \left[ I, -c \ptwo {x-y} 1, d e_2 \right] \\
		&= \ptwotwo 1 0 y 1
		\left( f^I(x-y, 0) + \ptwotwo 1 0 {x-y} 1 \muTr [I, -c e_2] + \muTr [I, d e_2] \right) \\
		&= \ptwotwo 1 0 y 1 f^I(x-y, 0) + \ptwotwo 1 0 x 1 \muTr [I, -c e_2] + \ptwotwo 1 0 y 1 \muTr [I, d e_2] ,
	\end{align*}
	which implies \eqref{eq: f^I(x, y) two}.
\end{proof}

\begin{lemma}\label{le: f^I VL(2)-covariant}
	Assume that $\mu \colon \cR^2 \to \R^2$ is a measurable valuation which is $\VLn[2]$-covariant.
	Let $f^I$ be defined as in Lemma \ref{le: f^I independent}.
	Then there exists a $\tilde k \in \R$ such that
	\[
		f^I(x, y) = \tilde k \left( \frac 1 a + \frac 1 b \right) (x-y) e_1
	\]
	for all $a, b > 0$ and $x, y \in \R$.
\end{lemma}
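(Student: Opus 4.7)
The plan is to extract polynomial structure for $f^I$ out of the Cauchy identities \eqref{eq: f^I(x, y) one}-\eqref{eq: f^I(x, y) two} via measurability, use $\VLn[2]$-covariance to simplify the resulting formula, and finally apply a single valuation decomposition to pin down the dependence on the base interval. Setting $g(x) := f^I(x, 0)$ and substituting \eqref{eq: f^I(x, y) two} at $x = 0$ into \eqref{eq: f^I(x, y) one} yields
\[
g(x) + \ptwotwo{1}{0}{y}{1} g(-y) = \ptwotwo{1}{0}{y}{1} g(x - y) .
\]
The second coordinate reads $g_2(x - y) = g_2(x) + g_2(-y)$, so by measurability $g_2(x) = \gamma x$ for some $\gamma \in \R$ depending on $a, b$. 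The first coordinate then becomes $g_1(x + u) = g_1(x) + g_1(u) + \gamma x u$, so $g_1(x) - \gamma x^2 / 2$ solves Cauchy's equation and equals $\delta x$ for some $\delta \in \R$. Substituting $g(x) = (\delta x + \tfrac \gamma 2 x^2) e_1 + \gamma x\, e_2$ into $f^I(x, y) = \ptwotwo{1}{0}{y}{1} g(x - y)$ gives
\[
f^I(x, y) = \bigl( \delta (x - y) + \tfrac \gamma 2 (x - y)(x + y) \bigr) e_1 + \gamma (x - y)\, e_2 ,
\]
with $\delta = \delta(a, b)$ and $\gamma = \gamma(a, b)$.

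Next I feed in the three $\VLn[2]$-transformations that preserve the family of double pyramids over $I$-type bases. For $\phi := \ptwotwo{1}{0}{0}{-1}$ one checks $\phi [I, -c\ptwo{x}{1}, d\ptwo{y}{1}] = [I, -d\ptwo{-y}{1}, c\ptwo{-x}{1}]$; using the explicit formulas of Lemma~\ref{le: Q^2 VL(2)-covariant} (which give $\phi \muTr[I, \mp c e_2] = \muTr[I, \pm c e_2]$) and the matrix identity $\phi \ptwotwo{1}{0}{x}{1} = \ptwotwo{1}{0}{-x}{1} \phi$, the pyramid contributions cancel and one obtains $f^I(-y, -x) = \phi f^I(x, y)$. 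Comparing coordinates forces $\gamma(a, b) = 0$, so $f^I(x, y) = \delta(a, b) (x - y)\, e_1$. Analogous bookkeeping for $\psi_t := \ptwotwo{t}{0}{0}{1/t}$ (using $\psi_t \ptwotwo{1}{0}{x}{1} = \ptwotwo{1}{0}{t^2 x}{1} \psi_t$ and $\muTr[\psi_t I, -(c/t) e_2] = \psi_t \muTr[I, -c e_2]$) yields $\delta(ta, tb) = \delta(a, b)/t$, and applying $-\Id \in \SLn[2]$ yields $\delta(b, a) = \delta(a, b)$.

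The final step uses a valuation decomposition across different base intervals. For $a_1 > a_2 > 0$ and $b_2 > b_1 > 0$, set $I_i := [-a_i e_1, b_i e_1]$ and pick apices $A := -c\ptwo{x}{1}$, $B := d\ptwo{y}{1}$ with $c, d$ small enough that all of $P_1 := [I_1, A, B]$, $P_2 := [I_2, A, B]$, $P_{\cup} := [I_1 \cup I_2, A, B]$ and $P_{\cap} := [I_1 \cap I_2, A, B]$ lie in $\cR^2$. A direct geometric argument (slicing each double pyramid into its upper and lower triangle over the base and tracing each cross-section from an apex down to the $x$-axis) shows $P_1 \cup P_2 = P_{\cup}$ and $P_1 \cap P_2 = P_{\cap}$. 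The valuation identity applied to $\mu$, combined with the direct check that the pyramid contributions from Lemma~\ref{le: Q^2 VL(2)-covariant} telescope in the base parameters, reduces to
\[
\delta(a_1, b_1) + \delta(a_2, b_2) = \delta(a_1, b_2) + \delta(a_2, b_1) .
\]
Hence $\delta(a, b) = \tilde\alpha(a) + \tilde\beta(b)$ for some functions $\tilde\alpha, \tilde\beta$. Symmetry forces $\tilde\alpha - \tilde\beta$ to be a constant $c$, so setting $\eta(r) := \tilde\beta(r) + c/2$ one has $\delta(a, b) = \eta(a) + \eta(b)$. Evaluating the homogeneity $\delta(ta, tb) = \delta(a, b)/t$ at $a = b$ gives $\eta(ta) = \eta(a)/t$, whence $\eta(r) = \tilde k / r$ and $\delta(a, b) = \tilde k(1/a + 1/b)$ for $\tilde k := \eta(1)$.

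I expect the delicate step to be the geometric identification $P_1 \cup P_2 = P_{\cup}$, since the union of two convex polytopes is generically non-convex: the key here is that $P_1$ and $P_2$ share both apices, so every horizontal cross-section of $P_1 \cup P_2$ is an interval and one can match it with the corresponding cross-section of the large convex hull. Once that is granted, the matrix bookkeeping in the second paragraph and the $F$-telescoping in the third are both mechanical, and the Cauchy analysis of the first paragraph is routine.
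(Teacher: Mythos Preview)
Your argument is correct in substance and follows the same overall strategy as the paper (Cauchy analysis of $f^I$, then covariance relations, then the valuation/homogeneity structure of the coefficient as a function of the base interval), but the tactical choices differ.

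One slip: the identity $\psi_t \ptwotwo{1}{0}{x}{1} = \ptwotwo{1}{0}{t^2 x}{1}\psi_t$ is false; the correct version is $\psi_t \ptwotwo{1}{0}{x}{1} = \ptwotwo{1}{0}{x/t^2}{1}\psi_t$. This does no damage, since by that stage you have $\gamma = 0$, the first row of every $\ptwotwo{1}{0}{z}{1}$ is $(1,0)$, and only $\muTr_1$ enters the first component; the relation $f^{tI}_1(t^2 x, t^2 y) = t\, f^I_1(x, y)$ and hence $\delta(ta, tb) = \delta(a, b)/t$ follow regardless.

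On the substantive side, you kill $\gamma$ with the single reflection $\ptwotwo{1}{0}{0}{-1}$, whereas the paper kills the same quantity (their $\nu$) by combining $\ptwotwo{-1}{0}{0}{1}$ and $-\Id$ to show it is simultaneously odd and even in $I$. For the coefficient $\delta$ (their $\xi$), the paper simply notes that it is an even, $(-1)$-homogeneous valuation on $\cP^1_o$ and invokes Theorem~\ref{th: dim 1 even}; you instead rederive this by hand, supplying the geometric union/intersection identity for double pyramids over nested bases and then solving the resulting functional relations directly. Your route is a bit longer but makes explicit the geometric fact that the paper tacitly uses when it asserts ``clearly, $\xi$ is a valuation.''
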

\begin{proof}
	We use $\muTr$ from Lemma \ref{le: Q^2 VL(2)-covariant} and apply Lemma \ref{le: f^I independent} to $\mu$.
	Combining \eqref{eq: f^I(x, y) one} and \eqref{eq: f^I(x, y) two}, we see that
	\[
		g(x) := f^I_2(x, 0) = \mu_2 \left[ I, -c \ptwo x 1, d e_2 \right] - \mu_2 [I, J]
	\]
	satisfies Cauchy's functional equation.
	Since $\mu$ is measurable, so is $g$.
	Therefore $g$ is linear.
	Thus there exists a $\nu \colon \cP^1_o \to \R$ with
	\[
		\mu_2 \left[ I, -c \ptwo x 1, d e_2 \right] - \mu_2 [I, J]  = \nu (I) x .
	\]
	Using the $\VLn[2]$-covariance for $\ptwotwo {-1} 0 0 1$, we obtain
	\[
		\nu (-I) x
		= \mu_2 \left[ -I, -c \ptwo x 1, d e_2 \right] - \mu_2 [-I, J]
		= \mu_2 \left[ I, -c \ptwo {-x} 1, d e_2 \right] - \mu_2 [I, J]
		= \nu (I) (-x) .
	\]
	Consequently, $\nu$ is odd.
	From the definition of $\nu$, the $\SLn[2]$-covariance of $\mu$ for $\ptwotwo {-1} 0 0 {-1}$,
	the definition of $f^I$, relation \eqref{eq: f^I(x, y) two} and again the definition of $\nu$ we infer
	\begin{align*}
		\nu (-I) x
		&= \mu_2 \left[ -I, -c \ptwo x 1, d e_2 \right] - \mu_2 [-I, J] \\
		&= - \mu_2 \left[ I, -d e_2, c \ptwo x 1 \right] + \mu_2 [I, -J] \\
		&= - f^I_2(0,x) \\
		&= - f^I_2(-x,0) \\
		&= - \nu (I) (-x) ,
	\end{align*}
	i.e.\ $\nu$ is even.
	Since $\nu$ is odd and even, it has to vanish.
	From \eqref{eq: f^I(x, y) two} we deduce that $f^I_2(x, y) = 0$.
	
	Using \eqref{eq: f^I(x, y) one}, \eqref{eq: f^I(x, y) two} and what we have just shown,
	we see that
	\[
		h(x) := f^I_1(x, 0)
		= \mu_1 \left[ I, -c \ptwo x 1, d e_2 \right] - \mu_1 [I, J] - x \muTr_2 [I, -c e_2]
	\]
	satisfies Cauchy's functional equation.
	Since $\mu$ is measurable, so is $h$.
	Therefore $h$ is linear.
	Thus there exists a $\xi \colon \cP^1_o \to \R$ with
	\[
		\mu_1 \left[ I, -c \ptwo x 1, d e_2 \right] - \mu_1 [I, J] - x \muTr_2 [I, -c e_2] = \xi (I) x .
	\]
	Using the definition of $\xi$, the $\VLn[2]$-covariance for $\ptwotwo {-1} 0 0 1$ and the representation of
	$\muTr_2 [I, -r c e_2]$ from Lemma \ref{le: Q^2 VL(2)-covariant}, we obtain
	\begin{align*}
		\xi (-I) x
		&= \mu_1 \left[ -I, -c \ptwo x 1, d e_2 \right] - \mu_1 [-I, J] - x \muTr_2 [-I, -c e_2] \\
		&= -\mu_1 \left[ I, -c \ptwo {-x} 1, d e_2 \right] + \mu_1 [I, J] - x \muTr_2 [I, -c e_2] \\
		&= -\xi (I) (-x) ,
	\end{align*}
	i.e.\ $\xi$ is even.
	Using the $\SLn[2]$-covariance of $\mu$ for $\ptwotwo r 0 0 {\frac 1 r}$ and again Lemma \ref{le: Q^2 VL(2)-covariant}, yields
	\begin{align*}
		\xi (r I) x
		&= \mu_1 \left[ r I, -c \ptwo x 1, d e_2 \right] - \mu_1 [r I, J] - x \muTr_2 [r I, -c e_2] \\
		&= r \mu_1 \left[ I, -c \ptwo {\frac x r} r, r d e_2 \right]
		- r \mu_1 [I, r J] - \frac x r \muTr_2 [I, -r c e_2] \\
		&= r \mu_1 \left[ I, -r c \ptwo {\frac x {r^2}} 1, r d e_2 \right]
		- r \mu_1 [I, r J] - r \frac x {r^2} \muTr_2 [I, -r c e_2] \\
		&= r \xi (I) \frac x {r^2} ,
	\end{align*}
	i.e.\ $\xi$ is $(-1)$-homogeneous.
	Clearly, $\xi$ is a valuation.
	By Theorem \ref{th: dim 1 even} there exists a $\tilde k \in \R$ such that $\xi(I) = \tilde k (\frac 1 a + \frac 1 b)$.
	An application of \eqref{eq: f^I(x, y) two} completes the proof.
\end{proof}

A combination of Lemma \ref{le: Q^2 VL(2)-covariant}, Lemma \ref{le: f^I independent},
Lemma \ref{le: f^I VL(2)-covariant} and relation \eqref{spec. splitting over pyramid} proves the following.

\begin{corollary}\label{cor. representation mu}
	Assume that $\mu \colon \cR^2 \to \R^2$ is a measurable valuation which is $\VLn[2]$-covariant.
	Then there exists a $\tilde k \in \R$ such that
	\begin{align*}
		\mu \left[ I, -c \ptwo x 1, d \ptwo y 1 \right]
		= & \ptwotwo 1 0 x 1
		\renewcommand{\arraystretch}{1.5}
		\begin{pmatrix}
			\frac 1 c ( F(bc) - F(ac) ) \\
			- \frac 1 a F(ac) - \frac 1 b F(bc) \\
		\end{pmatrix}
		\renewcommand{\arraystretch}{1.0}
		+ \ptwotwo 1 0 y 1
		\renewcommand{\arraystretch}{1.5}
		\begin{pmatrix}
			\frac 1 d ( F(bd) - F(ad) ) \\
			\frac 1 a F(ad) + \frac 1 b F(bd) \\
		\end{pmatrix} \\
		& + \tilde k \left( \frac 1 a + \frac 1 b \right) (x-y) e_1
	\end{align*}
	for all $a, b, c, d > 0$ and $x, y \in \R$, whenever $a,b,c,d,x,y$ form a double pyramid,
	where $F(r) := \frac 1 2 \mu_1 [-e_1, r e_1, E_2]$, $r \in (0, \infty)$.
\end{corollary}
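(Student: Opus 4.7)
The plan is to simply combine the three preceding lemmas; no new idea is required.

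First I would note that since $\VLn[2]$-covariance is strictly stronger than $\SLn[2]$-covariance, the restriction of $\mu$ to the straight double pyramids $\cQ^2$ remains an $\VLn[2]$-covariant valuation. Lemma \ref{le: Q^2 VL(2)-covariant} therefore applies and produces $\muTr$ with the explicit formulas \eqref{spec. splitting over pyramid}. In particular, $\mu$ splits over pyramids in the sense required by Lemma \ref{le: f^I independent}.

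Next, with the splitting in hand, Lemma \ref{le: f^I independent} defines a function $f^I$ and, by unpacking its definition, supplies the identity
$$\mu \left[ I, -c \ptwo x 1, d \ptwo y 1 \right] = \ptwotwo 1 0 x 1 \muTr [I, -c e_2] + \ptwotwo 1 0 y 1 \muTr [I, d e_2] + f^I(x, y).$$
Lemma \ref{le: f^I VL(2)-covariant}, which is the point at which measurability enters via Cauchy's equation, then identifies $f^I(x,y) = \tilde k (\frac 1 a + \frac 1 b)(x-y) e_1$ for some $\tilde k \in \R$. Substituting this together with the explicit formulas \eqref{spec. splitting over pyramid} for $\muTr$ into the displayed equation produces exactly the expression claimed by the corollary.

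I do not anticipate any genuine obstacle here: the content already lives in the three lemmas, and the only task is to verify that their hypotheses chain properly. The chaining works because $\VLn[2]$-covariance implies $\SLn[2]$-covariance and because the splitting-over-pyramids requirement of Lemma \ref{le: f^I independent} is precisely what Lemma \ref{le: Q^2 VL(2)-covariant} provides.
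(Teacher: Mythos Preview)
Your proposal is correct and follows exactly the paper's approach: the paper itself states the corollary as a direct combination of Lemma~\ref{le: Q^2 VL(2)-covariant}, Lemma~\ref{le: f^I independent}, Lemma~\ref{le: f^I VL(2)-covariant}, and relation~\eqref{spec. splitting over pyramid}, with no additional argument. Your explanation of how the hypotheses chain (in particular that $\VLn[2]$-covariance yields the $\SLn[2]$-covariance needed in Lemma~\ref{le: f^I independent}, and that Lemma~\ref{le: Q^2 VL(2)-covariant} supplies the splitting) is accurate and matches the intended logic.
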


Now, we have all prerequisites to classify valuations on $\cR^2$ which are $\VLn[2]$-covariant and measurable.

\begin{lemma}\label{le: R^2 VL(2)-covariant}
	Assume that $\mu \colon \cR^2 \to \R^2$ is a measurable valuation which is $\VLn[2]$-covariant.
	Then there exists a $k \in \R$ such that
	\begin{equation}\label{eq: R^2 VL(2)-covariant}
		\mu \left[ I, -c \ptwo x 1, d \ptwo y 1 \right]
		= k (a + b) \left( (c + d) \ptwo {b - a} {d - c} + (y d^2 - x c^2) e_1 \right)
	\end{equation}
	for all $a, b, c, d > 0$ and $x, y \in \R$, whenever $a,b,c,d,x,y$ form a double pyramid.
\end{lemma}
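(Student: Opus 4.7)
My strategy is to start from the representation in Corollary~\ref{cor. representation mu}, which determines $\mu$ in terms of a measurable function $F:(0,\infty)\to\R$ (with $F(1)=0$ forced by applying $\SLn[2]$-covariance to the central square $[-e_1,e_1,-e_2,e_2]$) and a single constant $\tilde k\in\R$. A direct algebraic verification, which I would carry out first, shows that the right-hand side of \eqref{eq: R^2 VL(2)-covariant} is precisely what the corollary's representation produces when one specializes $F(r)=kr^2$ and $\tilde k=0$. Here the additive constant in $F$ is invisible because only differences $F(bc)-F(ac)$ etc.\ enter the formulas. The lemma therefore reduces to establishing these two identifications.

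To pin down $F$, I plan to exploit $\SLn[2]$-covariance in a direction beyond what was used in Lemmas~\ref{le: Q^2 VL(2)-covariant}--\ref{le: f^I VL(2)-covariant}. While only upper-triangular matrices preserve $\cR^2$ on a generic double pyramid, a double pyramid with $x=y$ has both apexes collinear with the origin, and hence admits additional $\SLn[2]$-matrices $\phi$ that swap the role of the base $I$ with the apex line while leaving the image in $\cR^2$. Expanding both sides of $\mu(\phi P)=\phi\,\mu(P)$ via the corollary representation gives an identity relating values of $F$ at different products of the parameters. Specializing one of the parameters and collecting, the identity collapses to a Cauchy-type functional equation for $F$; combined with measurability and the normalization $F(1)=0$, this forces $F(r)=kr^2$ for some constant $k\in\R$.

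Once $F$ is identified, the only remaining freedom in the corollary representation is $\tilde k$, and it appears exclusively in the term $\tilde k\frac{a+b}{ab}(x-y)e_1$. Matching the resulting expression against the target formula for an arbitrary double pyramid with $x\ne y$: all contributions coming from $F(r)=kr^2$ line up (as checked in the preliminary algebraic verification), so the only possible discrepancy is in the $\tilde k$-term, and its absence from \eqref{eq: R^2 VL(2)-covariant} forces $\tilde k=0$. This yields the target formula.

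The main obstacle will be the first step: singling out the correct non-upper-triangular $\phi\in\SLn[2]$, verifying that $\phi P\in\cR^2$ for double pyramids with $x=y$, and massaging the resulting covariance identity into a functional equation strong enough that measurability alone reduces $F$ to a quadratic. The algebra is bookkeeping-heavy but mechanical, and the concluding argument for $\tilde k=0$ is then a routine comparison of coefficients.
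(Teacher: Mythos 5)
Your high-level plan — derive a functional equation for $F$ from an extra covariance relation, identify $F$, and plug back into Corollary \ref{cor. representation mu} — is the same as the paper's, but two linked errors in how you handle the constants $C$ (the additive constant in $F$) and $\tilde k$ make the argument, as written, incomplete.

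First, the assertion that ``the additive constant in $F$ is invisible because only differences $F(bc)-F(ac)$ etc.\ enter the formulas'' is false. The representation \eqref{spec. splitting over pyramid} contains the \emph{sums} $\tfrac1a F(ac)+\tfrac1b F(bc)$ and $\tfrac1a F(ad)+\tfrac1b F(bd)$, where an additive constant $C$ in $F$ does \emph{not} cancel. Carrying $F(r)=kr^2+C$ through Corollary~\ref{cor. representation mu}, the first component acquires the extra term $(x-y)(\tilde k - C)\bigl(\tfrac1a+\tfrac1b\bigr)$, so what actually needs to be shown is $\tilde k = C$, not $\tilde k = 0$. Since you (correctly) observe that $\VLn[2]$-covariance forces $F(1)=0$, i.e.\ $C=-k$, your proposal would have to prove $\tilde k=-k$; taking $\tilde k = 0$ instead would force $k = 0$ and hence $\mu = 0$, which is not the content of the lemma.

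Second, your closing argument — that ``the absence [of the $\tilde k$-term] from \eqref{eq: R^2 VL(2)-covariant} forces $\tilde k = 0$'' — is circular: the displayed formula is precisely what you are trying to prove, so it cannot be used to constrain the unknowns. You need an independent identity that ties $\tilde k$ to $C$. The paper produces exactly such an identity: it computes $\mu_1$ of a single triangle $S$ in two ways, once as a double pyramid $[-se_1,\tfrac{s}{1+t}e_1,-e_2,t\binom{s/t}{1}]$ with $x\neq y$ (so $\tilde k$ enters), and once as $\bigl(\begin{smallmatrix}0&-1\\1&0\end{smallmatrix}\bigr)$ applied to a double pyramid, which via $\SLn[2]$-covariance gives $\mu_1(S) = -\mu_2[\cdots]=0$. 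Equating the two yields an inhomogeneous functional equation for $F$ in which $\tilde k$ appears as the inhomogeneity; its general solution is $F(r)=kr^2+\tilde k$, automatically enforcing $C=\tilde k$ and making all $\tilde k$-terms cancel when substituted back. Note also that this equation is \emph{not} of Cauchy type and is solved exactly by the substitutions $s=u+v$, $t=u/v$ and $t=v/u$ — no measurability is needed at this stage (measurability already went into establishing Corollary~\ref{cor. representation mu}). Your plan of restricting to double pyramids with $x=y$ would miss $\tilde k$ entirely, since in that regime the $\tilde k$-term vanishes identically; any functional equation obtained from such configurations cannot determine $\tilde k$ (or the combination $\tilde k - C$), which is exactly the quantity you need to control.
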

\begin{proof}
	Let $s, t > 0$ and define a triangle $S$ by
	\[
		S = \left[ -s e_1, - e_2, \ptwo s t \right] .
	\]
	Based on different representations of $S$, we will calculate the first component $\mu_1 (S)$ of $\mu (S)$ in two different ways.
	First, note that
	\[
		S = \left[ -s e_1, \frac s {1 + t} e_1, - e_2, t \ptwo {\frac s t} 1 \right].
	\]
	From Corollary \ref{cor. representation mu} we deduce
	\begin{align*}
		\mu_1 (S)
		&= F \left( \frac s {1 + t} \right) - F(s) + \frac 1 t F \left( \frac {st} {1 + t} \right)  - \frac 1 t F(st) \\
		&\phantom{=} {} + \frac s t \left( \frac 1 s F(st) + \frac {1 + t} s F \left( \frac {st} {1 + t} \right) \right)
		- \tilde k \left( \frac 1 s + \frac {1 + t} s \right) \frac s t \\
		&= F \left( \frac s {1 + t} \right) - F(s) + \frac {2 + t} t F \left( \frac {st} {1 + t} \right) - \frac {2 + t} t \tilde k
	\end{align*}
	for some unknown function $F \colon (0, \infty) \to \R$ and some unknown constant $\tilde k \in \R$.
	Second, we have the representation
	\[
	  S = \ptwotwo 0 1 {-1} 0 \left[ - e_1, \frac t 2 e_1, -s \ptwo {- \frac t s} 1, s e_2 \right] .
	\]
	From the $\SLn[2]$-covariance of $\mu$ and Corollary \ref{cor. representation mu} we obtain
	\[
		\mu_1 (S)
		= - \mu_2 \left[ - e_1, \frac t 2 e_1, -s \ptwo {- \frac t s} 1, s e_2 \right]
		= F(s) + \frac 2 t F \left( \frac {st} 2 \right)
		- F(s) - \frac 2 t F \left( \frac {st} 2 \right)
		= 0 .
	\]
	Combining the above representations of $\mu_1(S)$, yields
	\begin{equation}\label{eq: functional equation}
		F(s) = F \left( \frac s {1 + t} \right) + \frac {2 + t} t F \left( \frac {st} {1 + t} \right) - \frac {2 + t} t \tilde k .
	\end{equation}
	For fixed $\tilde k$ this is an inhomogeneous functional equation in $F$.
	Clearly, $F(r) = \tilde k$, $r \in (0, \infty)$, is a solution.
	So it remains to solve the homogeneous functional equation
	\[
		G(s) = G \left( \frac s {1 + t} \right) + \frac {2 + t} t G \left( \frac {st} {1 + t} \right)
	\]
	for an unknown function $G \colon (0, \infty) \to \R$.
	Setting $s = u + v$ and $t = \frac u v$, $u, v > 0$, we obtain
	\[
		G(u + v) = G(v) + \frac {2v + u} u G(u) .
	\]
	On the other hand, setting $s = u + v$ and $t = \frac v u$, we arrive at
	\[
		G(u + v) = G(u) + \frac {2u + v} v G(v) .
	\]
	Combining the last two equations we obtain
	\[
		\frac v u G(u) = \frac u v G(v) .
	\]
	Setting $v = 1$ finally gives
	\[
		G(u) = u^2 G(1) .
	\]
	We see that $F(r) = k r^2 + \tilde k$ for some $k \in \R$ is the general solution for \eqref{eq: functional equation}.
	For this particular $F$, Corollary \ref{cor. representation mu} immediately proves \eqref{eq: R^2 VL(2)-covariant}.
	Note that when calculating \eqref{eq: R^2 VL(2)-covariant} in this way, all terms containing $\tilde k$ cancel out.
\end{proof}

Finally, we are in a position to prove our main characterization theorems in the plane.
Let us start with the $\VLn[2]$-covariant case.

\begin{theorem}\label{th: P^2_o VL(2)-covariant}
	Assume that $\mu \colon \cP^2_o \to \R^2$ is a measurable valuation which is $\VLn[2]$-covariant.
	Then there exists a $k \in \R$ such that
	\[
		\mu (P) = k \, m (P)
	\]
	for all $P \in \cP^2_o$.
\end{theorem}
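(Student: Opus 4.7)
The plan is to use Lemma \ref{le: R^2 VL(2)-covariant} to pin down $\mu$ on the class $\cR^2$ of double pyramids, identify the resulting formula with a scalar multiple of the moment vector $m$, and then extend the identity to all of $\cP^2_o$ via Theorem \ref{th: determined by values on SL(n)(R^n)}.

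First, I would apply Lemma \ref{le: R^2 VL(2)-covariant} directly to $\mu$ to obtain a constant $k_\mu \in \R$ such that
\[
	\mu \left[ I, -c \ptwo x 1, d \ptwo y 1 \right]
	= k_\mu (a + b) \left( (c + d) \ptwo {b - a} {d - c} + (y d^2 - x c^2) e_1 \right)
\]
whenever $a,b,c,d,x,y$ form a double pyramid. The same lemma also applies to the moment vector $m$ restricted to $\cR^2$: indeed, $m$ is a measurable valuation and is $\VLn[2]$-covariant, as remarked in Section \ref{prelim}. This yields a constant $k_m$ with the analogous representation for $m$.

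Next, I need to verify $k_m \neq 0$ by a single direct computation. Splitting the double pyramid into the two triangles $[-ae_1, be_1, d \ptwo y 1]$ and $[-ae_1, be_1, -c \ptwo x 1]$ and using the elementary formula $m(T) = \frac{\operatorname{area}(T)}{3}(p_1 + p_2 + p_3)$ for a triangle with vertices $p_1, p_2, p_3$, a short calculation yields exactly the expression on the right-hand side above with $k_m = 1/6$. Consequently, setting $k := 6 k_\mu$, I obtain $\mu(P) = k \, m(P)$ for every $P \in \cR^2$.

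Now define $\nu \colon \cP^2_o \to \R^2$ by $\nu(P) := \mu(P) - k \, m(P)$. Then $\nu$ is a measurable valuation, and since both $\mu$ and $m$ are $\VLn[2]$-covariant, so is $\nu$; in particular $\nu$ is $\SLn[2]$-covariant. By construction $\nu$ vanishes on $\cR^2$, and $\SLn[2]$-covariance gives $\nu(\phi Q) = \phi \, \nu(Q) = 0$ for every $\phi \in \SLn[2]$ and $Q \in \cR^2$. Theorem \ref{th: determined by values on SL(n)(R^n)} then forces $\nu \equiv 0$ on $\cP^2_o$, proving the theorem.

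The only delicate step is the explicit computation matching $m$ with the formula from Lemma \ref{le: R^2 VL(2)-covariant}; the rest is a clean reduction. I expect no real obstacle here, since the moment vector of a triangle has a closed form and the two required triangle areas are simply $(a+b)d/2$ and $(a+b)c/2$.
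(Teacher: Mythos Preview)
Your proof is correct and follows essentially the same route as the paper: both use Lemma \ref{le: R^2 VL(2)-covariant} to see that the space of such valuations on $\cR^2$ is at most one-dimensional, identify $\mu$ with a multiple of $m$ there, and then invoke $\SLn[2]$-covariance together with Theorem \ref{th: determined by values on SL(n)(R^n)} to extend to all of $\cP^2_o$. The only cosmetic difference is that you explicitly compute $k_m = 1/6$, whereas the paper argues more abstractly that $m$ (being nonzero) spans the at most one-dimensional space.
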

\begin{proof}
	From Lemma \ref{le: R^2 VL(2)-covariant} we deduce that the vector space of measurable $\VLn[2]$-covariant
	valuations $\mu \colon \cR^2 \to \R^2$ is at most $1$-dimensional.
	Since the moment vector $m$ is a measurable $\VLn[2]$-covariant valuation on $\cR^2$,
	there exists a constant $k \in \R$ with $\mu ( P ) = k \, m( P )$ for all double pyramids $P \in \cR^2$.
	Since $\mu$ and $m$ are both $\SLn[2]$-covariant, the last equality actually holds for all
	$\SLn[2]$ images of elements in $\cR^2$.
	Theorem \ref{th: determined by values on SL(n)(R^n)} therefore concludes the proof.
\end{proof}

Next, the $\VLn[2]$-signum-covariant case will be settled.

\begin{theorem}\label{th: P^2_o VL(2)-signum-covariant}
	Assume that $\mu \colon \cP^2_o \to \R^2$ is a measurable valuation which is $\VLn[2]$-signum-covariant.
	Then there exists a $k \in \R$ such that
	\[
		\mu (P) = k \, \rho_{\frac \pi 2} \, m (P^*)
	\]
	for all $P \in \cP^2_o$.
\end{theorem}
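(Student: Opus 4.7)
The plan is to reduce the statement to the $\VLn[2]$-covariant case already settled in Theorem \ref{th: P^2_o VL(2)-covariant}, by passing through the polarity map. Specifically, I associate to $\mu$ the map
\[
	\nu \colon \cP^2_o \to \R^2, \qquad \nu(P) := \rho_{\frac \pi 2}^{-1} \, \mu(P^*),
\]
show that $\nu$ is a measurable and $\VLn[2]$-covariant valuation, conclude from Theorem \ref{th: P^2_o VL(2)-covariant} that $\nu(P) = k \, m(P)$ for some $k \in \R$, and then undo the definition of $\nu$ by replacing $P$ with $P^*$ and using that polarity is an involution on $\cP^2_o$.

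That $\nu$ is a valuation follows directly from \eqref{valuation polar bodies}: whenever $K, L, K\cup L, K\cap L \in \cP^2_o$, the identities $(K\cup L)^* = K^*\cap L^*$ and $(K\cap L)^* = K^*\cup L^*$ translate the valuation equation for $\mu$ into the valuation equation for $P \mapsto \mu(P^*)$, and postcomposition with the fixed linear map $\rho_{\frac \pi 2}^{-1}$ preserves this property. Measurability of $\nu$ is immediate from the measurability of $\mu$ together with the fact, recalled in Section \ref{prelim}, that $K \mapsto K^*$ is a continuous involution on $\cK^n_o$.

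The actual work lies in verifying the $\VLn[2]$-covariance of $\nu$, which boils down to the planar identity
\[
	\rho_{\frac \pi 2}^{-1} \, \phi^{-t} = (\det \phi) \, \phi \, \rho_{\frac \pi 2}^{-1}
	\qquad \text{for all } \phi \in \VLn[2].
\]
This can be checked by a direct $2\times 2$ matrix computation using that $\det \phi = \pm 1$ (so $\phi^{-t} = (\det\phi)\operatorname{cof}(\phi)$ in the dimension two case). Granted this identity, \eqref{contravariance polarity} and the $\VLn[2]$-signum-covariance of $\mu$ combine to give, for every $\phi \in \VLn[2]$,
\[
	\nu(\phi P) = \rho_{\frac \pi 2}^{-1} \mu(\phi^{-t} P^*) = (\det \phi) \, \rho_{\frac \pi 2}^{-1} \phi^{-t} \mu(P^*) = (\det \phi)^2 \, \phi \, \nu(P) = \phi \, \nu(P),
\]
using $\det \phi^{-t} = \det \phi$ and $(\det\phi)^2 = 1$ on $\VLn[2]$. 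Theorem \ref{th: P^2_o VL(2)-covariant} then furnishes a constant $k \in \R$ with $\rho_{\frac \pi 2}^{-1} \mu(P^*) = k \, m(P)$; replacing $P$ by $P^*$ and applying $\rho_{\frac \pi 2}$ yields the desired formula $\mu(P) = k \, \rho_{\frac \pi 2} \, m(P^*)$. The only real obstacle is the short planar identity above; once it is in place, everything reduces to the already established covariant case.
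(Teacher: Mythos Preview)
Your proposal is correct and follows essentially the same approach as the paper: define $\nu(P)=\rho_{\frac\pi2}^{-1}\mu(P^*)$, use \eqref{valuation polar bodies} and \eqref{contravariance polarity} to see that $\nu$ is a measurable $\VLn[2]$-covariant valuation, apply Theorem~\ref{th: P^2_o VL(2)-covariant}, and then undo via the involution $P\mapsto P^*$. You even spell out the underlying $2\times2$ identity $\rho_{\frac\pi2}^{-1}\phi^{-t}=(\det\phi)\,\phi\,\rho_{\frac\pi2}^{-1}$ that the paper leaves implicit.
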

\begin{proof}
	Define $\nu \colon \cP^2_o \to \R^2$ by
	\[
		\nu (P) = \rho_{\frac \pi 2}^{-1} \mu (P^*)
	\]
	for all $P \in \cP^2_o$.
	Relations \eqref{valuation polar bodies} and \eqref{contravariance polarity} show that $\nu$ is
	a measurable $\VLn[2]$-covariant valuation.
	If Theorem \ref{th: P^2_o VL(2)-covariant} is applied to $\nu$, then the assertion follows easily
	from the fact that polarity is an involution.
\end{proof}

Combining the last two results, we can now prove the non-trivial part of Theorem \ref{th: main1 2d}.

\begin{theorem}
	Assume that $\mu \colon \cP^2_o \to \R^2$ is a measurable valuation which is $\SLn[2]$-covariant.
	Then there exist $k_1, k_2 \in \R$ such that
	\[
		\mu (P) = k_1 \, m(P) + k_2 \, \rho_{\frac \pi 2} \, m(P^*)
	\]
	for all $P \in \cP^2_o$.
\end{theorem}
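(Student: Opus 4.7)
The plan is to reduce the $\SLn[2]$-covariant case to the two cases already handled by Theorem \ref{th: P^2_o VL(2)-covariant} and Theorem \ref{th: P^2_o VL(2)-signum-covariant} via the even/odd decomposition \eqref{eq: mu = mu^+ + mu^-} established in Section \ref{prelim}.

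First, pick any $\theta \in \VLn[2] \setminus \SLn[2]$, for example $\theta = \operatorname{diag}(-1,1)$, and define $\mu^+$ and $\mu^-$ by \eqref{eq: definition mu^+} and \eqref{eq: definition mu^-}. As noted in the preliminaries, $\mu^+$ and $\mu^-$ are measurable valuations (since $\mu$ and $P \mapsto \theta^{-1} P$ are), and $\mu^+$ is $\VLn[2]$-covariant while $\mu^-$ is $\VLn[2]$-signum-covariant. Moreover $\mu = \mu^+ + \mu^-$ by \eqref{eq: mu = mu^+ + mu^-}.

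Applying Theorem \ref{th: P^2_o VL(2)-covariant} to $\mu^+$ gives a constant $k_1 \in \R$ with $\mu^+(P) = k_1 \, m(P)$ for all $P \in \cP^2_o$, and applying Theorem \ref{th: P^2_o VL(2)-signum-covariant} to $\mu^-$ yields $k_2 \in \R$ with $\mu^-(P) = k_2 \, \rho_{\frac \pi 2} \, m(P^*)$. Adding these two identities produces the claimed representation
\[
    \mu(P) = k_1 \, m(P) + k_2 \, \rho_{\frac \pi 2} \, m(P^*).
\]

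There is essentially no obstacle here: all the real work has been done in the preceding lemmas and theorems. The only points worth verifying are the routine ones, namely that $\mu^+$ and $\mu^-$ are indeed measurable (which follows because the map $P \mapsto \theta^{-1}P$ is continuous on $\cP^2_o$ and hence Borel measurable, so $P \mapsto \mu(\theta^{-1} P)$ is measurable), and that the definitions are independent of the choice of $\theta$, which was already observed in the preliminaries using $\SLn[2]$-covariance of $\mu$.
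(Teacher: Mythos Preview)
Your proof is correct and follows exactly the same approach as the paper: decompose $\mu$ into $\mu^+$ and $\mu^-$ via \eqref{eq: definition mu^+} and \eqref{eq: definition mu^-}, then apply Theorems \ref{th: P^2_o VL(2)-covariant} and \ref{th: P^2_o VL(2)-signum-covariant} to the pieces. The paper's version is just a more compressed statement of the same argument.
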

\begin{proof}
	Define $\mu^+$ and $\mu^-$ as in \eqref{eq: definition mu^+} and \eqref{eq: definition mu^-}, respectively.
	Theorems \ref{th: P^2_o VL(2)-covariant} and \ref{th: P^2_o VL(2)-signum-covariant} and \eqref{eq: mu = mu^+ + mu^-}
	now directly imply the desired result.
\end{proof}

By the correspondance between co- and contravariant valuations via polarity from Section \ref{prelim},
the following theorem is equivalent to the previous one.

\begin{theorem}
	Assume that $\mu \colon \cP^2_o \to \R^2$ is a measurable valuation which is $\SLn[2]$-contravariant.
	Then there exist $k_1, k_2 \in \R$ such that
	\[
		\mu (P) = k_1 \, m(P^*) + k_2 \, \rho_{\frac \pi 2} \, m(P)
	\]
	for all $P \in \cP^2_o$.
\end{theorem}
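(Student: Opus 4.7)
The plan is to invoke the polarity correspondence noted in Section \ref{prelim} and reduce to the covariant case just settled in the previous theorem. Given a measurable $\SLn[2]$-contravariant valuation $\mu$, I would define $\tilde\mu \colon \cP^2_o \to \R^2$ by
\[
    \tilde\mu(P) := \mu(P^*) .
\]
The goal is to check that $\tilde\mu$ satisfies the hypotheses of the preceding $\SLn[2]$-covariant theorem, apply that theorem, and then translate the resulting formula back to $\mu$ via the involutive nature of polarity.

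First I would verify the three properties of $\tilde\mu$. Measurability is immediate because polarity is a continuous involution on $\cK^n_o$ (restricting to $\cP^n_o$), so $P \mapsto P^*$ is Borel measurable and $\tilde\mu$ is a composition of measurable maps. The valuation property of $\tilde\mu$ is an application of \eqref{valuation polar bodies}: whenever $P,Q$ and $P\cup Q$ lie in $\cP^2_o$, the four polars $P^*,Q^*,P^*\cup Q^* = (P\cap Q)^*, P^*\cap Q^* = (P\cup Q)^*$ also lie in $\cP^2_o$, and adding the valuation identity for $\mu$ on the polar bodies gives exactly the valuation identity for $\tilde\mu$. Finally, $\SLn[2]$-covariance of $\tilde\mu$ is the explicit observation already recorded in Section \ref{prelim}: using \eqref{contravariance polarity}, for $\phi \in \SLn[2]$ we have
\[
    \tilde\mu(\phi P) = \mu((\phi P)^*) = \mu(\phi^{-t} P^*) = \phi \, \mu(P^*) = \phi \, \tilde\mu(P),
\]
where the third equality uses $\SLn[2]$-contravariance of $\mu$ applied to the matrix $\phi^{-t}\in\SLn[2]$.

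Having established these properties, I would apply the previous theorem to $\tilde\mu$ to obtain constants $k_1,k_2\in\R$ with
\[
    \tilde\mu(P) = k_1 \, m(P) + k_2 \, \rho_{\frac\pi 2} \, m(P^*)
\]
for every $P\in\cP^2_o$. Replacing $P$ by $P^*$ and using that polarity is an involution on $\cK^n_o$ (hence on $\cP^n_o$) converts this into
\[
    \mu(P) = \tilde\mu(P^*) = k_1 \, m(P^*) + k_2 \, \rho_{\frac\pi 2} \, m(P),
\]
which is exactly the claimed formula. Since every step is a routine consequence of the results already proved, I do not anticipate any real obstacle; the only point requiring care is the bookkeeping in the covariance computation (to make sure the $\phi^{-t}$ and $\phi$ appearances match up correctly) and the check that all the sets appearing in the valuation identity genuinely lie in $\cP^2_o$.
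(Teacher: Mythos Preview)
Your argument is correct and is exactly the route the paper takes: it states only that ``by the correspondence between co- and contravariant valuations via polarity from Section~\ref{prelim}, the following theorem is equivalent to the previous one,'' and you have simply spelled out that correspondence in full.
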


At the end of this subsection we prove a generalization of Theorem \ref{th: P^n_o GL(n)-covariant matrix-valued symmetric}
in the $2$-dimensional case.
We will show that the symmetry assumption can be omitted.

\begin{theorem}\label{th: P^2_o GL(2)-covariant matrix-valued}
	Assume that $\mu \colon \cP^2_o \to \R^{2 \times 2}$ is a measurable valuation which is $\GLn[2]$-covariant.
	Then there exists a $k \in \R$ such that
	\[
		\mu (P) = k \, M_2(P)
	\]
	for all $P \in \cP^2_o$.
\end{theorem}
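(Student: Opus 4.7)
The plan is to decompose $\mu$ into its symmetric and antisymmetric parts and reduce to Theorem \ref{th: P^n_o GL(n)-covariant matrix-valued symmetric}. Define
\[
  \mu^{\mathrm{sym}}(P) = \tfrac 1 2 \left( \mu (P) + \mu (P)^t \right) ,
  \qquad
  \mu^{\mathrm{anti}}(P) = \tfrac 1 2 \left( \mu (P) - \mu (P)^t \right) .
\]
Since $(\phi A \phi^t)^t = \phi A^t \phi^t$, both $\mu^{\mathrm{sym}}$ and $\mu^{\mathrm{anti}}$ are measurable $\GLn[2]$-covariant valuations. The symmetric part is handled directly by Theorem \ref{th: P^n_o GL(n)-covariant matrix-valued symmetric}, which supplies a constant $k \in \R$ with $\mu^{\mathrm{sym}}(P) = k \, M_2(P)$. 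It thus suffices to show that $\mu^{\mathrm{anti}}$ vanishes identically.

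The space of antisymmetric $2 \times 2$ matrices is one-dimensional, so I can write $\mu^{\mathrm{anti}}(P) = f(P) \, J$ with $J = \ptwotwo{0}{-1}{1}{0}$ and a measurable valuation $f \colon \cP^2_o \to \R$. A direct calculation shows that $\phi \, J \, \phi^t = (\det \phi) \, J$ for every $\phi \in \GLn[2]$, so the $\GLn[2]$-covariance of $\mu^{\mathrm{anti}}$ translates into the scalar transformation rule
\[
  f(\phi P) = |\det \phi| \, (\det \phi) \, f(P)
\]
for all $P \in \cP^2_o$ and $\phi \in \GLn[2]$. Restricting to $\SLn[2]$ shows that $f$ is a measurable $\SLn[2]$-invariant valuation, and Theorem \ref{th: P^n_o SL(n)-invariant} then yields constants $k_0, k_1, k_2 \in \R$ with $f(P) = k_0 \, \chi (P) + k_1 \, V(P) + k_2 \, V(P^*)$.

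The last step is to feed the dilations $\phi = \lambda \, \Id$, $\lambda > 0$, into the transformation rule. Using $|\det \phi| (\det \phi) = \lambda^4$, $V(\lambda P) = \lambda^2 V(P)$ and $V((\lambda P)^*) = \lambda^{-2} V(P^*)$, I obtain the identity
\[
  k_0 + k_1 \lambda^2 V(P) + k_2 \lambda^{-2} V(P^*)
  = \lambda^4 \left( k_0 + k_1 V(P) + k_2 V(P^*) \right) ,
\]
which must hold for every fixed $P \in \cP^2_o$ and every $\lambda > 0$. Choosing some $P$ with $V(P), V(P^*) > 0$ (e.g.\ the square $[-e_1, e_1, -e_2, e_2]$) and comparing the coefficients of the distinct monomials $\lambda^{-2}, \lambda^0, \lambda^2, \lambda^4$ forces $k_0 = k_1 = k_2 = 0$. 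Hence $f \equiv 0$, so $\mu^{\mathrm{anti}} \equiv 0$ and $\mu = \mu^{\mathrm{sym}} = k \, M_2$, as claimed. The only nontrivial step in this plan is the scaling argument in the final paragraph; splitting off the antisymmetric part and reducing it to a scalar $\SLn[2]$-invariant valuation is mechanical once one has made the observation that $\phi J \phi^t = (\det \phi) J$ in dimension two.
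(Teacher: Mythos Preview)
Your proof is correct and takes a genuinely different, more economical route than the paper. Both arguments begin by splitting off the symmetric part and invoking Theorem \ref{th: P^n_o GL(n)-covariant matrix-valued symmetric}, but from there they diverge. The paper attacks the antisymmetric part directly on double pyramids: it first uses Theorem \ref{th: dim 1 odd} and a reflection $\left(\begin{smallmatrix}0&1\\1&0\end{smallmatrix}\right)$ to kill $\mu_{12}$ on $\cQ^2$, then runs an $f^I$-style argument (Cauchy's equation plus competing homogeneities) to extend vanishing to $\cR^2$, and finally appeals to Theorem \ref{th: determined by values on SL(n)(R^n)}. Your approach instead exploits the two-dimensional accident that antisymmetric $2\times 2$ matrices are one-dimensional and that $\phi J\phi^t=(\det\phi)J$, which collapses the antisymmetric part to a scalar $\SLn[2]$-invariant valuation; then Theorem \ref{th: P^n_o SL(n)-invariant} and a single dilation comparison finish the job. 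This is shorter and conceptually cleaner, at the cost of relying on Theorem \ref{th: P^n_o SL(n)-invariant} (which the paper has already established in Section \ref{prelim}, so there is no circularity). The paper's longer argument, by contrast, is self-contained at the level of one-dimensional valuation lemmas and does not need the full scalar classification; it also mirrors the $f^B$ machinery used later in higher dimensions, which may explain the authors' choice.
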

\begin{proof}
	We can write the map $P \mapsto \mu (P)$ as the sum of its symmetric part,
	$P \mapsto \frac 1 2 \left( \mu(P) + \mu(P)^t \right)$,
	and its antisymmetric part, $P \mapsto \frac 1 2 \left( \mu(P) - \mu(P)^t \right)$.
	Note that $P \mapsto \mu (P)^t$ inherits all the assumed properties of $\mu$.
	Therefore, the symmetric and antisymmetric part of $\mu$ are both measurable and
	$\GLn[2]$-covariant valuations.
	Hence, by Theorem \ref{th: P^n_o GL(n)-covariant matrix-valued symmetric},
	we only have to show that the antisymmetric part vanishes.

	Therefore, assume that $\mu(P)$ is antisymmetric for all $P \in \cP^2_o$.
	The component $\mu_{12} [I, J]$ is a $2$-homogeneous odd valuation in both $I$ and $J$.
	On the one hand, by Theorem \ref{th: dim 1 odd}, we have
	\[
		\mu_{12} [I, J] = \mathrm{const} \cdot \left( b^2 - a^2 \right) \left( d^2 - c^2 \right) .
	\]
	In particular, $\mu_{12} [I, J] = \mu_{12} [-c e_1, d e_1, -a e_2, b e_2]$.
	On the other hand, by the $\GLn[2]$-covariance and the antisymmetry of $\mu$,
	\begin{align*}
		\mu_{12} [I, J]
		&= \mu_{12} \left( \ptwotwo 0 1 1 0 [-c e_1, d e_1, -a e_2, b e_2] \right) \\
		&= \mu_{21} [-c e_1, d e_1, -a e_2, b e_2] \\
		&= - \mu_{12} [-c e_1, d e_1, -a e_2, b e_2] .
	\end{align*}
	Thus $\mu_{12}$ vanishes on $\cQ^2$.
	By the antisymmetry of $\mu$ we conclude that $\mu$ vanishes on $\cQ^2$.
	
	As in the proof of Lemma \ref{le: f^I independent} one can show that for fixed
	$a,b > 0$ and $x,y \in \R$,	the quantity
	\[
		\mu \left[ I, -c \ptwo x 1, d \ptwo y 1 \right]
	\]
	does not depend on $c,d >0$, as long as $a, b, c, d, x, y$ form a double pyramid.
	Hence for each $I$ the function
	\[
		f^I(x, y) := \mu \left[ I, -c \ptwo x 1, d \ptwo y 1 \right]
	\]
	is well defined on $\R \times \R$.
	As in the proof of Lemma \ref{le: f^I independent} we see that
	\[
		f^I(x, y) = f^I(x, 0) + f^I(0, y)
	\]
	and
	\[
		f^I(x, y) = \ptwotwo 1 0 y 1 f^I(x-y, 0) {\ptwotwo 1 0 y 1}^t .
	\]
	Combining these two equations and using the antisymmetry of $\mu$,
	we see that $x \mapsto f^I_{12}(x, 0)$ satisfies Cauchy's functional equation.
	Since $\mu$ is measurable, so is $f^I$, and hence $x \mapsto f^I_{12}(x, 0)$ is linear.
	The definition of $f^I$ therefore implies
	\begin{equation}\label{12 linearity}
	  \mu_{12} \left[ I, -c \ptwo {rx} 1, d e_2 \right] = r \mu_{12} \left[ I, -c \ptwo x 1, d e_2 \right]
	\end{equation}
	for $r > 0$.
	On the one hand, by the $\GLn[2]$-covariance and \eqref{12 linearity}, we obtain
	\begin{align*}
		\mu_{12} \left[ r I, -c \ptwo x 1, d e_2 \right]
		&= \mu_{12} \left( \ptwotwo r 0 0 1 \left[ I, -c \ptwo {\frac x r} 1, d e_2 \right] \right) \\
		&= r^2 \mu_{12} \left[ I, -c \ptwo {\frac x r} 1, d e_2 \right] \\
		&= r \mu_{12} \left[ I, -c \ptwo x 1, d e_2 \right] .
	\end{align*}
	On the other hand, the $\GLn[2]$-covariance and the independence with respect to $c$ and $d$ yield
	\[
		\mu_{12} \left[ r I, -c \ptwo x 1, d e_2 \right] =
		\mu_{12} \left( r \left[ I, - \frac c r \ptwo x 1, \frac d r e_2 \right] \right) =
		r^4 \mu_{12} \left[ I, -c \ptwo x 1, d e_2 \right] .
	\]
	We conclude that
	\[
		\mu_{12} \left[ I, -c \ptwo x 1, d e_2 \right] = 0.
	\]
	By the $\GLn[2]$-covariance we have
	\[
		\mu_{12} \left[ I, -c \ptwo x 1, d \ptwo y 1 \right] = \mu_{12} \left[ I, -c \ptwo {x-y} 1, d e_2 \right] = 0.
	\]
	From the antisymmetry we infer that $\mu$ vanishes on $\cR^2$.
	An application of Theorem \ref{th: determined by values on SL(n)(R^n)} completes the proof.
\end{proof}

As before, the following theorem is equivalent to the previous one.

\begin{theorem}\label{th: P^2_o GL(2)-contravariant matrix-valued}
	Assume that $\mu \colon \cP^2_o \to \R^{2 \times 2}$ is a measurable valuation which is $\GLn[2]$-contravariant.
	Then there exists a $k \in \R$ such that
	\[
		\mu (P) = k \, M_2(P^*)
	\]
	for all $P \in \cP^2_o$.
\end{theorem}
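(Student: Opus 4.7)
The plan is to reduce the statement to the already-proved Theorem \ref{th: P^2_o GL(2)-covariant matrix-valued} via the polarity transform, exactly as the remark preceding the statement suggests.

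First I would define the auxiliary map $\nu \colon \cP^2_o \to \R^{2 \times 2}$ by $\nu(P) = \mu(P^*)$. Since $P \mapsto P^*$ is a continuous involution on $\cK^n_o$, the map $\nu$ inherits measurability from $\mu$. The valuation property of $\nu$ follows from \eqref{valuation polar bodies}: whenever $K, L, K \cup L, K \cap L \in \cP^2_o$, applying $\mu$ to $(K \cup L)^* = K^* \cap L^*$ and $(K \cap L)^* = K^* \cup L^*$ and using the valuation property of $\mu$ on the convex union $K^* \cup L^*$ yields the valuation identity for $\nu$.

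Next I would check that $\nu$ is $\GLn[2]$-covariant. Using \eqref{contravariance polarity} and the $\GLn[2]$-contravariance of $\mu$, for any $\phi \in \GLn[2]$,
\[
\nu(\phi P) = \mu((\phi P)^*) = \mu(\phi^{-t} P^*) = |\det \phi^{-t}|^{-1} (\phi^{-t})^{-t} \mu(P^*) (\phi^{-t})^{-1} = |\det \phi| \, \phi \, \nu(P) \, \phi^t,
\]
which is exactly $\GLn[2]$-covariance. Hence Theorem \ref{th: P^2_o GL(2)-covariant matrix-valued} applies to $\nu$, producing a constant $k \in \R$ with $\nu(P) = k \, M_2(P)$ for all $P \in \cP^2_o$.

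Finally, unwinding the definition gives $\mu(P^*) = k \, M_2(P)$ for all $P \in \cP^2_o$. Since polarity is an involution on $\cP^2_o$, I can substitute $P \mapsto P^*$ to conclude $\mu(P) = k \, M_2(P^*)$ for all $P \in \cP^2_o$. There is no real obstacle here; the only things to verify carefully are that $\nu$ is indeed a valuation (which needs that the polarity preserves the combinatorics of unions and intersections as stated in \eqref{valuation polar bodies}) and that the contravariance of $\mu$ translates cleanly into covariance of $\nu$ with the correct determinantal factor, both of which are immediate from the identities recorded in Section \ref{prelim}.
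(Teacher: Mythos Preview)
Your proposal is correct and follows exactly the approach the paper intends: the paper states only that this theorem ``is equivalent to the previous one'' via the polarity correspondence, and your argument spells out precisely that reduction using \eqref{valuation polar bodies} and \eqref{contravariance polarity}. There is nothing to add.
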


\subsection{The $n$-dimensional case}

In this subsection we will prove our main theorems in all dimensions greater or equal than three.
Let us start by formulating four theorems which will be established in the sequel.
First, we state covariant classification results for vector and matrix valued valuations.

\begin{theorem}\label{th: P^n_o SL(n)-covariant}
	Let $n \geq 2$.
	Assume that $\mu \colon \cP^n_o \to \R^n$ is a measurable valuation which is $\SLn$-covariant.
	Then, for $n = 2$, there exist $k_1, k_2 \in \R$ such that
	\[
		\mu (P) = k_1 \, m(P) + k_2 \, \rho_{\frac \pi 2} \, m(P^*)
	\]
	for all $P \in \cP^n_o$ and, for $n \geq 3$, there exists a $k \in \R$ such that
	\[
		\mu (P) = k \, m(P)
	\]
	for all $P \in \cP^n_o$.
\end{theorem}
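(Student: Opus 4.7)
The $n=2$ case has already been established in the previous subsection, so my plan addresses only $n\geq 3$. The strategy is to apply the standard decomposition $\mu = \mu^+ + \mu^-$ from \eqref{eq: definition mu^+}--\eqref{eq: mu = mu^+ + mu^-}, which writes $\mu$ as the sum of a measurable $\VLn$-covariant valuation $\mu^+$ and a measurable $\VLn$-signum-covariant valuation $\mu^-$. Since the moment vector $m$ is $\VLn$-covariant, it suffices to prove two statements: there exists $k\in\R$ with $\mu^+(P)=k\,m(P)$ for all $P\in\cP^n_o$, and $\mu^-$ vanishes identically on $\cP^n_o$. The disappearance of the $\rho_{\frac{\pi}{2}}$-term in dimensions $n\geq 3$ is encoded entirely in the vanishing of $\mu^-$.

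For the $\VLn$-covariant piece $\mu^+$, my plan is to promote it to a matrix-valued $\GLn$-covariant valuation and invoke Theorem \ref{th: P^n_o GL(n)-covariant matrix-valued symmetric} once the symmetry hypothesis has been removed (generalizing the two-dimensional Theorem \ref{th: P^2_o GL(2)-covariant matrix-valued} to higher dimensions, which is presumably the subject of the remainder of Section 3). A natural candidate for the matrix-valued companion is $M(P):=\mu^+(P)\,m(P)^t$ or a suitably symmetrized variant; the valuation property and $\GLn$-covariance of $M$ should follow from those of $\mu^+$ and $m$ together with the homogeneity of $\mu^+$ extracted from $\SLn$-covariance on straight double pyramids. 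An alternative, more hands-on route is to mimic the two-dimensional strategy directly: classify $\mu^+$ on $\cQ^n$ by slicing perpendicular to $e_n$ and using Theorems \ref{th: dim 1 even}--\ref{th: dim 1 odd} on the $e_n$-direction together with an inductive classification of $\SLn[n-1]$-covariant valuations on the base $B\in\cP^{n-1}_o$; extend to $\cR^n$ via an ``$f^B$ independent of $c,d$'' lemma modeled on Lemma \ref{le: f^I independent}; and then pass to all of $\cP^n_o$ by Theorem \ref{th: determined by values on SL(n)(R^n)}.

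For the signum-covariant piece $\mu^-$, the plan is to exploit reflections $\theta\in\VLn\setminus\SLn$ that fix a suitable coordinate hyperplane. For every straight double pyramid $P=[B,-c e_n,d e_n]\in\cQ^n$ whose base $B$ is $\theta$-symmetric, the two identities $\mu^-(\theta P)=(\det\theta)\,\theta\,\mu^-(P)=-\theta\,\mu^-(P)$ and $\mu^-(\theta P)=\mu^-(P)$ force every component of $\mu^-(P)$ lying in the $(+1)$-eigenspace of $\theta$ to vanish. In dimension $n\geq 3$, enough independent such reflections---acting on disjoint coordinate pairs and combined with rotations in $\SLn$---are available to annihilate every component of $\mu^-(P)$ on a sufficiently rich subfamily of $\cQ^n$. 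Extending first to $\cR^n$ and then to $\cP^n_o$ via Theorem \ref{th: determined by values on SL(n)(R^n)} finishes the vanishing of $\mu^-$. The planar obstruction, the summand $\rho_{\frac{\pi}{2}}m(P^*)$, survives in two dimensions precisely because there one has only a single sign-twist available and no compensating extra reflection.

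The principal obstacle is the $\mu^+$ step: either one generalizes Theorem \ref{th: P^2_o GL(2)-covariant matrix-valued} to all $n\geq 3$, removing the symmetry hypothesis in Theorem \ref{th: P^n_o GL(n)-covariant matrix-valued symmetric}, or one executes the double-pyramid peeling of Lemmas \ref{le: Q^2 VL(2)-covariant}--\ref{le: R^2 VL(2)-covariant} in full generality. In either case the decisive analytic input is Cauchy's functional equation on the one-dimensional slices; measurability promotes its solutions to linear ones, and evaluation on a single standard simplex normalizes the free constant to match $m$.
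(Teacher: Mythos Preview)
Your approach diverges substantially from the paper's, and the $\mu^+$ step contains a genuine gap. The map $M(P)=\mu^+(P)\,m(P)^t$ is \emph{not} a valuation: the valuation identity for $\mu^+$ and for $m$ separately does not imply it for their tensor product, since in general $\mu^+(K\cup L)\,m(K\cup L)^t+\mu^+(K\cap L)\,m(K\cap L)^t\neq \mu^+(K)\,m(K)^t+\mu^+(L)\,m(L)^t$. So the promotion to Theorem~\ref{th: P^n_o GL(n)-covariant matrix-valued symmetric} does not go through. Your ``hands-on'' alternative is closer to what is actually needed, but note that the extension step from $\cQ^n$ to $\cR^n$ (the analog of Lemma~\ref{le: f^I independent}) produces, via Cauchy's equation, a map $\nu\colon\cP^{n-1}_o\to\R^{(n-1)\times(n-1)}$ whose classification requires the \emph{matrix}-valued theorem in dimension $n-1$; this is why the paper runs the induction simultaneously on all four Theorems~\ref{th: P^n_o SL(n)-covariant}--\ref{th: P^n_o GL(n)-contravariant matrix-valued}. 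Your reflection argument for $\mu^-$ also only kills $\mu^-$ on bodies with coordinate symmetries; to reach all of $\cQ^n$ you would still need the $\SLn[n-1]$-invariant scalar classification (Theorem~\ref{th: P^n_o SL(n)-invariant}) for $\mu^-_n[B,J]$ and an inductive hypothesis on signum-covariant valuations for $\mu^-_*[B,J]$, which you have not set up.

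The paper sidesteps all of this by never decomposing $\mu=\mu^++\mu^-$ for $n\geq3$. Instead it proves the \emph{contravariant} statement (Theorem~\ref{th: P^n_o SL(n)-contravariant}) directly by induction: on $\cQ^n$ one analyzes $\mu_n[B,J]$ via Theorem~\ref{th: P^n_o SL(n)-invariant} and $\mu_*[B,J]$ via the $(n{-}1)$-dimensional contravariant hypothesis, then compares homogeneities to pin down the single constant (Lemma~\ref{le: induction P^n_o SL(n)-contravariant}); the passage $\cQ^n\to\cR^n$ (Lemma~\ref{le: induction R^n_o SL(n)-contravariant vanishes}) invokes the $(n{-}1)$-dimensional matrix theorem. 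Once the contravariant case is settled, the covariant Theorem~\ref{th: P^n_o SL(n)-covariant} follows in one line from the polarity correspondence $P\mapsto\mu(P^*)$, which exchanges covariance and contravariance. This polarity trick is the organizing idea you are missing.
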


\begin{theorem}\label{th: P^n_o GL(n)-covariant matrix-valued}
	Let $n \geq 2$.
	Assume that $\mu \colon \cP^n_o \to \R^{n \times n}$ is a measurable valuation which is $\GLn$-covariant.
	Then there exists a $k \in \R$ such that
	\[
		\mu (P) = k \, M_2(P)
	\]
	for all $P \in \cP^n_o$.
\end{theorem}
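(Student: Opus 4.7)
The plan is to emulate the planar argument (Theorem \ref{th: P^2_o GL(2)-covariant matrix-valued}) and reduce to it by induction on $n$. First I would split $\mu$ into its symmetric and antisymmetric parts $P \mapsto \frac{1}{2}(\mu(P) + \mu(P)^t)$ and $P \mapsto \frac{1}{2}(\mu(P) - \mu(P)^t)$. Both inherit measurability, the valuation property, and $\GLn$-covariance, since $(\phi X \phi^t)^t = \phi X^t \phi^t$. Theorem \ref{th: P^n_o GL(n)-covariant matrix-valued symmetric} then identifies the symmetric part with $k\, M_2$, and the whole theorem reduces to showing that every measurable, $\GLn$-covariant, antisymmetric valuation $\mu \colon \cP^n_o \to \R^{n \times n}$ vanishes identically.

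For this I would use induction on $n$, with base case $n=2$ supplied by Theorem \ref{th: P^2_o GL(2)-covariant matrix-valued}. The $\GLn$-covariance entails $\SLn$-covariance by conjugation, so Theorem \ref{th: determined by values on SL(n)(R^n)} reduces the problem to showing that $\mu$ vanishes on the set $\cR^n$ of double pyramids. I would first address the straight double pyramids $\cQ^n$, i.e., polytopes of the form $[B, -c e_n, d e_n]$ with $B \in \cP^{n-1}_o$. For the entries $\mu_{ij}$ with $i, j < n$, consider the block-diagonal embedding $\GLn[n-1] \hookrightarrow \SLn$ sending $\phi$ to the matrix with $\phi$ in the upper-left and $(\det \phi)^{-1}$ in the bottom-right. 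Using the homogeneity of $\mu$ in $c, d$ extracted from diagonal $\GLn$-elements, one can absorb the induced rescaling of $c, d$ and verify that $\nu(B) :=$ upper-left $(n-1) \times (n-1)$ block of $\mu[B, -c e_n, d e_n]$ is a measurable, $\GLn[n-1]$-covariant, antisymmetric valuation on $\cP^{n-1}_o$. The inductive hypothesis then gives $\nu \equiv 0$. The mixed entries $\mu_{in}$ with $i < n$ are handled by a one-dimensional argument in the spirit of Theorems \ref{th: dim 1 even} and \ref{th: dim 1 odd}, combined with a swap-matrix element of $\GLn$ exchanging coordinates $i$ and $n$, which forces the antisymmetric entries to vanish.

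Having established $\mu|_{\cQ^n} = 0$, I would extend to $\cR^n$ by adapting Lemma \ref{le: f^I independent}: for fixed $B \in \cP^{n-1}_o$, set $f^B(x, y) := \mu[B, -c(x, 1), d(y, 1)]$, show independence of $c, d$, the splitting $f^B(x, y) = f^B(x, 0) + f^B(0, y)$, and, after an $\SLn$-covariant shear, a Cauchy-type additivity in each coordinate of $x$. Measurability upgrades additivity to linearity, and a homogeneity check derived from $\GLn$-covariance together with antisymmetry forces $f^B \equiv 0$. A final application of Theorem \ref{th: determined by values on SL(n)(R^n)} lifts vanishing on $\cR^n$ to $\cP^n_o$.

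The main obstacle will be the inductive step: the block-diagonal embedding of $\GLn[n-1]$ into $\SLn$ necessarily rescales the apex parameters $c, d$ by $(\det \phi)^{-1}$, so one has to check carefully, using the homogeneity implicit in $\GLn$-covariance, that the induced block valuation $\nu$ is honestly $\GLn[n-1]$-covariant rather than merely covariant up to factors depending on $c, d$. Carrying out this bookkeeping, along with the separate treatment of the $(i, n)$-entries and the Cauchy-type extension from $\cQ^n$ to $\cR^n$, is the computational heart of the argument.
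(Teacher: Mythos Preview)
Your overall architecture---split into symmetric and antisymmetric parts, kill the symmetric part with Theorem~\ref{th: P^n_o GL(n)-covariant matrix-valued symmetric}, induct on $n$ for the antisymmetric part, show vanishing first on $\cQ^n$, then extend to $\cR^n$, then invoke Theorem~\ref{th: determined by values on SL(n)(R^n)}---matches the paper. Two points deserve comment.

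First, your worry about the block-diagonal embedding $\GLn[n-1]\hookrightarrow\SLn$ is self-inflicted. Since $\mu$ is $\GLn$-covariant, not merely $\SLn$-covariant, you can use $\operatorname{diag}(\phi,1)\in\GLn$ for $\phi\in\GLn[n-1]$ directly: this fixes $J$ and gives $\mu_{**}[\phi B,J]=|\det\phi|\,\phi\,\mu_{**}[B,J]\,\phi^t$ on the nose, with no rescaling of $c,d$ to absorb. The paper does exactly this in Lemma~\ref{le: induction P^n_o GL(n)-covariant matrix-valued}, and your ``main obstacle'' evaporates.

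Second, and more seriously, your treatment of the mixed column $\mu_{*n}$ on $\cQ^n$ has a gap. A one-dimensional argument in $J$ (via Theorem~\ref{th: dim 1 odd}) only gives $\mu_{*n}[B,J]=\nu(B)(d^2-c^2)$ for some measurable, $\SLn[n-1]$-covariant, $n$-homogeneous valuation $\nu\colon\cP^{n-1}_o\to\R^{n-1}$. Your swap trick then forces $\nu$ to vanish on \emph{crosspolytopes} $[I_1,\ldots,I_{n-1}]$, but this does not propagate to all $B\in\cP^{n-1}_o$: crosspolytopes do not exhaust $\cP^{n-1}_o$ under $\SLn[n-1]$, and $\nu$ is only a valuation, not determined by a single orbit. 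To conclude $\nu\equiv 0$ you need the vector-valued classification (Theorem~\ref{th: P^n_o SL(n)-covariant}) in dimension $n-1$, which identifies $\nu$ as a multiple of $m'$; the swap then kills the constant. This is precisely why the paper runs a \emph{simultaneous} induction on Theorems~\ref{th: P^n_o SL(n)-covariant}--\ref{th: P^n_o GL(n)-contravariant matrix-valued} rather than proving Theorem~\ref{th: P^n_o GL(n)-covariant matrix-valued} in isolation. Your $\cR^n$ extension is fine and matches Lemma~\ref{le: R^n_o GL(n)-covariant matrix-valued vanishes}, but it presupposes vanishing on all of $\cQ^n$, which you have not established.
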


Second, we formulate the corresponding contravariant statements.

\begin{theorem}\label{th: P^n_o SL(n)-contravariant}
	Let $n \geq 2$.
	Assume that $\mu \colon \cP^n_o \to \R^n$ is a measurable valuation which is $\SLn$-contravariant.
	Then, for $n = 2$, there exist $k_1, k_2 \in \R$ such that
	\[
		\mu (P) = k_1 \, m(P^*) + k_2 \, \rho_{\frac \pi 2} \, m(P)
	\]
	for all $P \in \cP^n_o$ and, for $n \geq 3$, there exists a $k \in \R$ such that
	\[
		\mu (P) = k \, m(P^*)
	\]
	for all $P \in \cP^n_o$.
\end{theorem}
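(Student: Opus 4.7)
The statement is the contravariant companion of Theorem \ref{th: P^n_o SL(n)-covariant}, and the paper has already set up the duality machinery in Section \ref{prelim}: polarity is a continuous involution on $\cK^n_o$, it swaps unions and intersections of convex-union pairs via \eqref{valuation polar bodies}, and it swaps $\phi$ and $\phi^{-t}$ via \eqref{contravariance polarity}. The plan is therefore to reduce the contravariant case to the already-proved covariant one by conjugating with the polar map.

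Concretely, given a measurable $\SLn$-contravariant valuation $\mu$, I would set
\[
  \nu (P) := \mu (P^*)
\]
for $P \in \cP^n_o$, and verify three properties of $\nu$. First, $\nu$ is a valuation: if $K, L, K \cup L, K \cap L \in \cP^n_o$, then $K^*, L^*$ lie in $\cP^n_o$, their union $K^* \cup L^* = (K \cap L)^*$ is convex (being polar to a polytope in $\cP^n_o$), and \eqref{valuation polar bodies} rewrites the valuation identity for $\nu$ as the valuation identity for $\mu$ on the bodies $K^*, L^*, K^* \cup L^*, K^* \cap L^*$. Second, $\nu$ is $\SLn$-covariant: for $\phi \in \SLn$, \eqref{contravariance polarity} gives
\[
  \nu (\phi P) = \mu ((\phi P)^*) = \mu (\phi^{-t} P^*) = (\phi^{-t})^{-t} \mu (P^*) = \phi \, \nu (P).
\]
Third, $\nu$ is measurable as the composition of the measurable map $\mu$ with the continuous involution $P \mapsto P^*$.

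Theorem \ref{th: P^n_o SL(n)-covariant} now applies to $\nu$. For $n \geq 3$ it yields $\nu (P) = k \, m (P)$ for some $k \in \R$, i.e.\ $\mu (P^*) = k \, m(P)$; replacing $P$ by $P^*$ and using $P^{**} = P$ gives $\mu (P) = k \, m (P^*)$. For $n = 2$ it yields $\nu (P) = k_1 m(P) + k_2 \rho_{\pi/2} m(P^*)$, and the same substitution $P \leftrightarrow P^*$ delivers $\mu (P) = k_1 m (P^*) + k_2 \rho_{\pi/2} m (P)$, as claimed.

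There is essentially no obstacle here; all the content sits in Theorem \ref{th: P^n_o SL(n)-covariant}. The only points needing attention are the bookkeeping in the valuation identity under polarity (making sure that all four bodies $K^*, L^*, K^*\cup L^*, K^*\cap L^*$ genuinely lie in $\cP^n_o$ and that $K^*\cup L^*$ is convex, both of which follow from \eqref{valuation polar bodies} and the fact that polarity sends $\cP^n_o$ to itself) and the sign/transpose tracking in the covariance calculation, which is immediate from \eqref{contravariance polarity}.
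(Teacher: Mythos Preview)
Your polarity reduction is formally correct and is exactly the duality the paper exploits, but you have run it in the wrong direction relative to the paper's logical structure, and this makes the argument circular. In the paper, Theorems \ref{th: P^n_o SL(n)-covariant}, \ref{th: P^n_o GL(n)-covariant matrix-valued}, \ref{th: P^n_o SL(n)-contravariant} and \ref{th: P^n_o GL(n)-contravariant matrix-valued} are proved together by a simultaneous induction on $n$; in the induction step the \emph{contravariant} vector case (Theorem \ref{th: P^n_o SL(n)-contravariant}) is established first and directly, via Lemmas \ref{le: induction R^n_o SL(n)-contravariant vanishes} and \ref{le: induction P^n_o SL(n)-contravariant}, and only then is the covariant case (Theorem \ref{th: P^n_o SL(n)-covariant}) deduced from it by the very polarity argument you wrote down. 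So when you invoke Theorem \ref{th: P^n_o SL(n)-covariant} for $n\geq 3$ as ``already proved'', you are invoking a statement whose only proof in the paper rests on the theorem you are trying to establish.

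The substantive content you are missing is the work of Lemma \ref{le: induction P^n_o SL(n)-contravariant}: on straight double pyramids one analyses $\mu_n[B,J]$ using Theorem \ref{th: P^n_o SL(n)-invariant} in $B$ and the one-dimensional classifications in $J$, analyses $\mu_*[B,J]$ using the $(n-1)$-dimensional contravariant hypothesis, and then matches homogeneities under a coordinate swap to force the space of such valuations on $\cQ^n$ to be one-dimensional. Lemma \ref{le: induction R^n_o SL(n)-contravariant vanishes} then shows, via a Cauchy-type argument and the $(n-1)$-dimensional matrix classification, that a contravariant valuation vanishing on $\cQ^n$ already vanishes on $\cR^n$, after which Theorem \ref{th: determined by values on SL(n)(R^n)} finishes. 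None of this can be short-circuited by polarity alone, because polarity just interchanges the covariant and contravariant problems without solving either.
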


\begin{theorem}\label{th: P^n_o GL(n)-contravariant matrix-valued}
	Let $n \geq 2$.
	Assume that $\mu \colon \cP^n_o \to \R^{n \times n}$ is a measurable valuation which is $\GLn$-contravariant.
	Then there exists a $k \in \R$ such that
	\[
		\mu (P) = k \, M_2(P^*)
	\]
	for all $P \in \cP^n_o$.
\end{theorem}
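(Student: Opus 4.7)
The plan is a direct reduction to the covariant version, Theorem \ref{th: P^n_o GL(n)-covariant matrix-valued}, via the polarity involution on $\cP^n_o$. This follows precisely the passage-through-polarity pattern already employed in the excerpt (for example in the proof of Theorem \ref{th: P^2_o VL(2)-signum-covariant}). I would introduce
\[
\nu \colon \cP^n_o \to \R^{n \times n}, \qquad \nu(P) = \mu(P^*),
\]
and verify that $\nu$ satisfies every hypothesis of the covariant theorem.

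Measurability of $\nu$ is immediate from the measurability of $\mu$ and the continuity of polarity on $\cK^n_o$. To confirm the valuation property, suppose $P, Q, P \cup Q, P \cap Q \in \cP^n_o$. The identities \eqref{valuation polar bodies} give $(P \cup Q)^* = P^* \cap Q^*$ and $(P \cap Q)^* = P^* \cup Q^*$; the latter is convex because its polar $P \cap Q$ lies in $\cP^n_o$ and polarity is an involution. All four polar bodies therefore belong to $\cP^n_o$, and applying the valuation equation of $\mu$ to this quadruple translates directly into the valuation equation for $\nu$.

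For $\GLn$-covariance I rely on \eqref{contravariance polarity}: $(\phi P)^* = \phi^{-t} P^*$. Setting $\psi = \phi^{-t}$ and invoking the contravariance of $\mu$, one uses $|\det \psi|^{-1} = |\det \phi|$, $\psi^{-t} = \phi$, and $\psi^{-1} = \phi^t$ to obtain
\[
\nu(\phi P) = \mu(\phi^{-t} P^*) = |\det \phi| \, \phi \, \mu(P^*) \, \phi^t = |\det \phi| \, \phi \, \nu(P) \, \phi^t,
\]
which is exactly the $\GLn$-covariance condition. Theorem \ref{th: P^n_o GL(n)-covariant matrix-valued} now supplies a constant $k \in \R$ with $\nu(P) = k\, M_2(P)$ for all $P \in \cP^n_o$, and replacing $P$ by $P^*$ and using $(P^*)^* = P$ yields $\mu(P) = k\, M_2(P^*)$.

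There is no genuine obstacle in this argument beyond careful bookkeeping of transformation laws; the entire proof consists in checking that the polarity substitution swaps contravariance for covariance while preserving measurability and the valuation axiom. In particular the constant $k$ produced by the theorem is the same constant that arises from the covariant classification applied to $\nu$.
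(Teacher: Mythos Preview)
Your proposal is correct and follows exactly the paper's own approach: in the simultaneous inductive proof of the four theorems, the paper deduces Theorem~\ref{th: P^n_o GL(n)-contravariant matrix-valued} from Theorem~\ref{th: P^n_o GL(n)-covariant matrix-valued} via the polarity involution in a single sentence (``Theorems~\ref{th: P^n_o SL(n)-covariant} and~\ref{th: P^n_o GL(n)-contravariant matrix-valued} follow directly from Theorems~\ref{th: P^n_o SL(n)-contravariant} and~\ref{th: P^n_o GL(n)-covariant matrix-valued}, respectively''). You have simply spelled out the verification of measurability, the valuation identity, and the covariance computation that the paper leaves implicit.
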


Recall that the $2$-dimensional cases of the above results have already been established.
Obviously, our main Theorem \ref{th: main1} will be a direct consequence of Theorem \ref{th: P^n_o SL(n)-covariant}.

In full generality, the above theorems will be proved at the end of this section.
We will do this by induction on the dimension.
However, it is necessary to perform this induction simultaneously for all four theorems.
For the reader's convenience we therefore collect the main steps in several lemmas.
The induction itself can be found at the very end of this section.
Let us start with the vector valued case.

\begin{lemma}\label{le: induction R^n_o SL(n)-contravariant vanishes}
	Let $n \geq 3$ and suppose that Theorems \ref{th: P^n_o SL(n)-contravariant}
	and \ref{th: P^n_o GL(n)-contravariant matrix-valued} hold in dimension $n-1$.
	Assume that $\mu \colon \cR^n \to \R^n$ is a measurable valuation which is $\SLn$-contravariant and vanishes on $\cQ^n$.
	Then $\mu$ vanishes on $\cR^n$.
	More explicitly, if
	\[
		\mu [B, -c e_n, d e_n] = 0
	\]
	for all $B \in \cP^{n-1}_o$ and $c, d > 0$,
	then
	\[
		\mu \left[ B, -c \ptwo x 1, d \ptwo y 1 \right] = 0
	\]
	for all $B \in \cP^{n-1}_o$, $c, d > 0$ and $x, y \in \R^{n-1}$,
	whenever $B, c, d, x, y$ form a double pyramid.
\end{lemma}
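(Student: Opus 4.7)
The plan is to mirror the planar Lemmas~\ref{le: f^I independent}--\ref{le: R^2 VL(2)-covariant} in the contravariant setting, and then finish with the two inductive hypotheses. For a tuple $(B,c,d,x,y)$ forming a double pyramid I would set
\[
	f^B(x,y) := \mu\left[B,-c\ptwo{x}{1},d\ptwo{y}{1}\right].
\]
The shear $\phi_y := \ptwotwo{I_{n-1}}{0}{-y}{1}\in\SLn$ sends $[B,-s(y,1),t(y,1)]$ to the straight double pyramid $[B,-se_n,te_n]\in\cQ^n$, so by contravariance and the vanishing hypothesis $\mu$ vanishes on every such ``tilted straight'' double pyramid. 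Feeding this into the valuation relation on $[B,-c(x,1),d(y,1)]\cup[B,-s(y,1),t(y,1)]$ (exactly as in Lemma~\ref{le: f^I independent}) shows that $f^B(x,y)$ is independent of $c,d$. A second split against a small straight double pyramid $[B,-re_n,re_n]$ then gives $f^B(x,y)=f^B(x,0)+f^B(0,y)$, and contravariance under $\phi_y$, which sends $[B,-c(x,1),d(y,1)]$ to $[B,-c(x-y,1),de_n]$, yields the shear identity
\[
	f^B(x,y) = \phi_y^t\, f^B(x-y,0).
\]

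Setting $g(x):=f^B(x,0)$ and combining the two relations produces $g(x+y)=g(y)+\phi_y^t g(x)$. The first $n-1$ rows of $\phi_y^t$ are trivial, so $x\mapsto g(x)_\ast$ is Cauchy-additive and hence, by measurability, linear: $g(x)_\ast=C_B x$; swapping $x,y$ in the shear identity forces $C_B$ to be symmetric. The $n$th coordinate then satisfies $h(x+y)-h(x)-h(y)=-y^tC_Bx$, whose measurable solutions are $h(x)=-\tfrac12 x^tC_Bx+m_B\cdot x$ for some $m_B\in\R^{n-1}$. The assignments $B\mapsto C_B$ and $B\mapsto m_B$ inherit the valuation property on $\cP^{n-1}_o$ by splitting the base. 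Using $\SLn$-contravariance through the embeddings $\operatorname{diag}(\psi,1)$ for $\psi\in\SLn[n-1]$, $\operatorname{diag}(-1,1,\ldots,1,-1)$, and $\operatorname{diag}(\lambda I_{n-1},\lambda^{-(n-1)})$ for $\lambda>0$, one checks that $B\mapsto m_B$ is $\SLn[n-1]$-contravariant and $(-1)$-homogeneous, while $B\mapsto C_B$ is symmetric, measurable and $\GLn[n-1]$-contravariant. The inductive hypothesis (Theorem~\ref{th: P^n_o SL(n)-contravariant} in dimension $n-1$) identifies $m_B$ with a scalar multiple of $m(B^\ast)$, plus (only when $n=3$) of $\rho_{\pi/2}m(B)$; as these are $(-n)$- respectively $n$-homogeneous and never $(-1)$-homogeneous for $n\ge3$, both scalars must vanish and $m_B=0$. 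The inductive hypothesis (Theorem~\ref{th: P^n_o GL(n)-contravariant matrix-valued} in dimension $n-1$) then forces $C_B = k\,M_2(B^\ast)$ for a single constant $k\in\R$.

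The main obstacle is the last step of showing $k=0$: the functional equations and the $\SLn$-contravariance used above are automatically consistent with every $k$, because $M_2(B^\ast)$ has precisely the $(-n-1)$-homogeneity demanded by $C_{\lambda B}=\lambda^{-(n+1)}C_B$. To kill $k$, I would exploit one further application of contravariance by an $\SLn$ element that mixes the $e_n$-direction with $e_n^\perp$ in a manner not captured by the block-upper-triangular stabilizer --- for instance, conjugating an apex-swapping reflection by a suitable shear, or comparing two valuation decompositions of a single pyramid $[B,-c(x,1),de_n]$ whose pieces are related by contravariance. The resulting identity forces the quadratic expression $x^tM_2(B^\ast)x$ to vanish identically in $B$ and $x$, which is only possible if $k=0$. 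Once $k=0$ we have $g\equiv0$, so $f^B(x,y)=g(x)-g(y)\equiv0$ on every double pyramid, proving the lemma.
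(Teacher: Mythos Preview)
Your setup and the derivation of $g(x)_* = C_B x$, $g(x)_n = -\tfrac12 x^t C_B x + m_B \cdot x$, together with the identifications $m_B = 0$ and $C_B = k\,M_2'(B^*)$, track the paper's argument closely (with the cosmetic difference that you solve the inhomogeneous Cauchy equation for $g_n$ before knowing $C_B = 0$, whereas the paper first kills $C_B$ and then treats $g_n$ as a genuine linear form $\xi(B)^t x$). One small slip: your shear $\phi_y$ as written is lower triangular and does not fix $B\subset e_n^\perp$; the map you want is the upper-triangular $\left(\begin{smallmatrix} I_{n-1} & -y \\ 0 & 1 \end{smallmatrix}\right)$, after which $\phi_y^t$ is lower triangular and your ``first $n-1$ rows trivial'' claim becomes correct.

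The genuine gap is exactly where you flag it: the argument for $k = 0$. Your proposed mechanisms (an apex-swapping reflection conjugated by a shear, or ``two valuation decompositions'') remain at the level of a wish; in fact no identity coming purely from the block-triangular stabiliser of $e_n^\perp$ together with the valuation property can force $k = 0$, since you have already exhausted that information in reaching $C_B = k\,M_2'(B^*)$ with its consistent homogeneity. The paper's device is concrete and different: specialise the base to a cross-polytope $B = [I_1,\ldots,I_{n-1}]$. The polytope
\[
P = \left[I_1,\ldots,I_{n-1},\, -c\ptwo{e_1'}{1},\, d e_n\right]
\]
then admits a \emph{second} description as a double pyramid with apices the two endpoints of $I_2$ (or of any $I_j$ with $2\le j\le n-1$) over the base $P\cap e_2^\perp$. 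Because these apices lie on a line through the origin, $P$ is an $\SLn$-image of an element of $\cQ^n$, and hence $\mu(P)=0$ by hypothesis. On the other hand $\mu_*(P) = C_B\,e_1' = k\,M_2'(B^*)\,e_1'$, and $M_2'(B^*)$ is positive definite, so $k=0$. This is precisely the step that consumes the assumption $n\ge 3$: one needs a spare coordinate direction $e_j$ with $2\le j\le n-1$ to serve as the new apex axis.
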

\begin{proof}
	The valuation property implies that
	\begin{multline*}
		\mu \left[ B, -c \ptwo x 1, d \ptwo y 1 \right] + \mu \left[ B, -s \ptwo y 1, t \ptwo y 1 \right] = \\
		\mu \left[ B, -c \ptwo x 1, t \ptwo y 1 \right] + \mu \left[ B, -s \ptwo y 1, d \ptwo y 1 \right]
	\end{multline*}
	for sufficiently small $s,t > 0$.
	By the $\SLn$-contravariance and the assumption that $\mu$ vanishes on $\cQ^n$, we have
	\[
		\mu \left[ B, -s \ptwo y 1, t \ptwo y 1 \right] = 0
		\quad \textrm{ and } \quad
		\mu \left[ B, -s \ptwo y 1, d \ptwo y 1 \right] = 0 .
	\]
	Therefore
	\[
		\mu \left[ B, -c \ptwo x 1, d \ptwo y 1 \right] = \mu \left[ B, -c \ptwo x 1, t \ptwo y 1 \right] ,
	\]
	i.e.\ the left hand side is independent of $d$.
	Similarly we see that it is also independent of $c$.
	So for each $B \in \cP^{n-1}$ we can define a function $f^B \colon \R^{n-1} \times \R^{n-1} \to \R^n$ by
	\[
		f^B(x,y) = \mu \left[ B, -c \ptwo x 1, d \ptwo y 1 \right] ,
	\]
	as long as $B, c, d, x, y$ form a double pyramid.
	It remains to prove that $f^B$ vanishes for each $B \in \cP^{n-1}$.
	By the $\SLn$-contravariance we obtain
	\[
		\mu \left[ B, -c \ptwo x 1, d \ptwo y 1 \right] =
		{\ptwotwo {\Id'} 0 y 1}^{-t} \mu \left[ B, -c \ptwo {x-y} 1, d e_n \right] ,
	\]
	which is equivalent to
	\begin{equation} \label{eq: f^B(x-y, 0)}
		f^B(x, y) = \ptwotwo {\Id'} {-y^t} 0 1 f^B(x-y, 0) .
	\end{equation}
	The valuation property yields
	\[
		\mu \left[ B, -c \ptwo x 1, d \ptwo y 1 \right] + \mu [ B, -r e_n, r e_n] =
		\mu \left[ B, -c \ptwo x 1, r e_n \right] + \mu \left[ B, -r e_n, d \ptwo y 1 \right]
	\]
	for sufficiently small $r > 0$.
	Since $\mu [ B, -r e_n, r e_n] = 0$, relation \eqref{eq: f^B(x-y, 0)} implies
	\begin{equation} \label{eq: f^B(x, 0)}
		f^B(x+y, 0) = \ptwotwo {\Id'} {-y^t} 0 1 f^B(x, 0) + f^B(y, 0) .
	\end{equation}
	So the map $x \mapsto f^B_*(x,0)$ satisfies Cauchy's functional equation.
	The measurability of $\mu$ implies that also $f^B_*$ is measurable.
	Hence $x \mapsto f^B_*(x,0)$ is linear.
	Thus there exists a map $\nu \colon \cP^{n-1}_o \to \R^{(n-1) \times (n-1)}$ such that
	\begin{equation}\label{eq: f^B nu}
		f^B_*(x,0) = \nu (B) x .
	\end{equation}
	Using the $\SLn$-contravariance of $\mu$, we see that $\nu$ is $\GLn[n-1]$-contravariant.
	Note that in order to prove this for linear transformations with negative determinant,
	one has to use \eqref{eq: f^B(x-y, 0)}.
	By the assumption that Theorem \ref{th: P^n_o GL(n)-contravariant matrix-valued} holds in
	dimension $n-1$, there exists a $k \in \R$ such that
	\begin{equation}\label{eq: nu M_2}
		\nu (B) = k M_2'(B^*) .
	\end{equation}
	For $i \in \{ 1, \ldots, n-1 \}$, let $I_i$ be a line segment in $\spn \{ e_i \}$ containing the origin in its interior.
	Since $n \geq 3$, the double pyramid
	\[
	  \left[ I_1, \ldots, I_{n-1}, -c \ptwo {e_1'} 1, d e_n \right]
	\]
	is contained in the $\SLn$-image of $\cQ^n$.
	From relations \eqref{eq: f^B nu}, \eqref{eq: nu M_2} and the definition of $f^B_*$, we obtain
	\[
		k M_2' \left( [I_1, \ldots, I_{n-1}]^* \right) e_1' =
		\mu_* \left[ I_1, \ldots, I_{n-1}, -c \ptwo {e_1'} 1, d e_n \right] =
		0 .
	\]
	Therefore, $k = 0$ and, by \eqref{eq: f^B(x-y, 0)}, \eqref{eq: f^B nu} and \eqref{eq: nu M_2},
	we conclude that $f^B_*$ vanishes.

	Using what we have just shown,
	the same approach yields the existence of a map $\xi \colon \cP^{n-1}_o \to \R^{n-1}$ such that
	\[
		f^B_n(x,0) = {\xi (B)}^t x .
	\]
	The $\SLn$-contravariance of $\mu$ implies that $\xi$ is $\SLn[n-1]$-contravariant and $(-1)$-homogeneous.
	Note that the moment vector is homogeneous of degree $n+1$.
	By the assumption that Theorem \ref{th: P^n_o SL(n)-contravariant} holds in dimension $n-1$,
	we deduce that $\xi$ vanishes on $\cP^{n-1}_o$.
	Therefore, using \eqref{eq: f^B(x-y, 0)}, also $f^B_n$ vanishes.
\end{proof}

\begin{lemma}\label{le: induction P^n_o SL(n)-contravariant}
	Let $n \geq 3$ and suppose that Theorems \ref{th: P^n_o SL(n)-contravariant}
	and \ref{th: P^n_o GL(n)-contravariant matrix-valued} hold in dimension $n-1$.
	Assume that $\mu \colon \cP^n_o \to \R^n$ is a measurable valuation which is $\SLn$-contravariant.
	Then there exists a $k \in \R$ such that
	\[
		\mu (P) = k \, m(P^*)
	\]
	for all $P \in \cP^n_o$.
\end{lemma}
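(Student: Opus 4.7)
The overall plan is to exhibit a single constant $k \in \R$ for which the measurable, $\SLn$-contravariant valuation
\[
\tilde\mu(P) := \mu(P) - k\, m(P^*)
\]
vanishes on the set $\cQ^n$ of straight double pyramids. Once this is achieved, Lemma \ref{le: induction R^n_o SL(n)-contravariant vanishes} (whose hypotheses are in force by the inductive assumption) extends the vanishing from $\cQ^n$ to $\cR^n$, and Theorem \ref{th: determined by values on SL(n)(R^n)} extends it from $\cR^n$ to all of $\cP^n_o$, since $\tilde\mu$ vanishes on every $\SLn$-image of $\cR^n$ by contravariance.

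To produce $k$, I fix $c, d > 0$ and study the map $B \mapsto \mu[B, -ce_n, de_n]$ as an $\R^n$-valued measurable valuation on $\cP^{n-1}_o$. Restricting the $\SLn$-contravariance to block-diagonal elements $\ptwotwo{\phi'}{0}{0}{1}$ with $\phi' \in \SLn[n-1]$ shows that the first $n-1$ components form an $\SLn[n-1]$-contravariant $\R^{n-1}$-valued valuation, while the last component is an $\SLn[n-1]$-invariant scalar valuation. By the inductive hypothesis of Theorem \ref{th: P^n_o SL(n)-contravariant} and by Theorem \ref{th: P^n_o SL(n)-invariant} (i.e.\ Theorem \ref{th: main2}), both applied in dimension $n-1$, one obtains for each $(c,d)$ an expansion: the first $n-1$ components are a scalar multiple of $m'(B^*)$ (plus, only when $n=3$, a scalar multiple of $\rho_{\frac \pi 2}\, m'(B)$), and the last component is a linear combination of $\chi(B)$, $V'(B)$ and $V'(B^*)$, with coefficients depending measurably on $c,d$.

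The next step is to pin down those coefficients by using $\SLn$-elements that do \emph{not} preserve $e_n^\perp$. Diagonal matrices of the form $\operatorname{diag}(\lambda, \ldots, \lambda, \lambda^{-(n-1)})$ furnish homogeneity identities in $c,d$, while volume-preserving sign changes of two coordinates, e.g.\ $\operatorname{diag}(1, \ldots, 1, -1, -1)$, produce symmetry relations linking $(c,d)$ with $(d,c)$. Combined with the valuation property applied to decompositions of a double pyramid across a hyperplane parallel to $e_n^\perp$, these relations reduce the dependence on $c$ and $d$ to Cauchy-type functional equations whose measurable solutions are completely described by the one-dimensional classifications in Theorems \ref{th: dim 1 even} and \ref{th: dim 1 odd}.

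The main obstacle will be to eliminate every coefficient except the one producing $m(P^*)$. The hardest subcase is $n = 3$, in which the induction also delivers the additional contribution $\rho_{\frac \pi 2}\, m'(B)$ in the first two components; to exclude it I plan to compare $\mu$ on a symmetric base $B$ (for example $B = [-e_1, e_1, -e_2, e_2]$) with its image under an $\SLn[3]$-element permuting the coordinates, so that the offending coefficient must vanish. Analogous comparisons, together with the homogeneity and symmetry identities above, kill the $\chi$ and $V'$ contributions in the last component and force the surviving coefficients of $V'(B^*)$ and of $m'(B^*)$ to be proportional to one another through a single scalar $k$. That $k$ is finally identified with the coefficient of $m(P^*)$ by direct evaluation on one explicit double pyramid, so that $\tilde\mu$ vanishes on $\cQ^n$ and the argument of the first paragraph completes the proof.
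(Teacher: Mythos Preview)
Your plan is correct and follows essentially the same route as the paper: split $\mu[B,J]$ into the last component (handled via Theorem~\ref{th: P^n_o SL(n)-invariant}) and the first $n-1$ components (handled via the inductive hypothesis on Theorem~\ref{th: P^n_o SL(n)-contravariant}), then use diagonal $\SLn$-elements to force homogeneity of the resulting one-dimensional coefficient valuations in $J$ and classify them via Theorems~\ref{th: dim 1 even} and~\ref{th: dim 1 odd}. The paper's key ``linking'' step is the single identity
\[
\mu_n[I_1,\ldots,I_{n-1},rI_n]=\mu_{n-1}[I_1,\ldots,I_{n-2},-ra_ne_{n-1},rb_ne_{n-1},-b_{n-1}e_n,a_{n-1}e_n],
\]
obtained from the $\SLn$-contravariance under the rotation swapping the $e_{n-1}$- and $e_n$-axes; comparing powers of $r$ on both sides simultaneously kills $k_0,k_1,\tilde k_3$ and forces $2k_2=k_3$, which is exactly your ``compare on a symmetric base via a permutation'' idea. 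One small warning: your phrase ``decompositions of a double pyramid across a hyperplane parallel to $e_n^\perp$'' is imprecise, since a slice at nonzero height produces a piece not in $\cP^n_o$; what you really need (and what the paper uses) is that $J\mapsto\mu[B,J]$ is a valuation on $\cP^1_o$, obtained from overlapping segments $J_1,J_2\in\cP^1_o$ with convex union.
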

\begin{proof}
	The expression $\mu_n [B,J]$ is a measurable valuation in both arguments.
	Since it is $\SLn[n-1]$-invariant in $B$, Theorem \ref{th: P^n_o SL(n)-invariant} implies that
	\[
		\mu_n [B,J] = \nu_0 (J) + \nu_1 (J) V'(B) + \nu_2 (J) V'(B^*) ,
	\]
	with suitable $\nu_0, \nu_1, \nu_2 \colon \cP^1_o \to \R$.
	For an arbitrary $\theta \in \VLn[n-1] \setminus \SLn[n-1]$, we have
	\begin{multline*}
		\nu_0 (-J) + \nu_1 (-J) V'(B) + \nu_2 (-J) V'(B^*) \\
		\begin{aligned}
			&= \mu_n [B,-J] \\
			&= - \mu_n [\theta B, J] \\
			&= - \left( \nu_0 (J) + \nu_1 (J) V'(\theta B) + \nu_2 (J) V'((\theta B)^*) \right) \\
			&= - \nu_0 (J) - \nu_1 (J) V'(B) - \nu_2 (J) V'(B^*) ,
		\end{aligned}
	\end{multline*}
	where we used the invariance of volume with respect to maps with determinant $-1$ and \eqref{contravariance polarity}.
	Comparing degrees of homogeneity with repect to $B$ shows that $\nu_0$, $\nu_1$ and $\nu_2$ are odd.
	Similarly we can check that $\nu_0$, $\nu_1$ and $\nu_2$ are measurable valuations
	and that $\nu_0$ is $(-1)$-homogeneous, $\nu_1$ $0$-homogeneous and $\nu_2$ $(-2)$-homogeneous.
	By Theorem \ref{th: dim 1 odd} there exist constants $k_0, k_1, k_2 \in \R$ such that
	\begin{equation}\label{eq: induction P^n_o SL(n)-contravariant - mu_n}
		\mu_n [B,J] = k_0 \left( d^{-1} - c^{-1} \right)
			+ k_1 \ln \left( \frac d c \right) V'(B) + k_2 \left( d^{-2} - c^{-2} \right) V'(B^*) .
	\end{equation}
	The expression $\mu_* [B,J]$ is also a measurable valuation in both arguments.
	Since it is $\SLn[n-1]$-contravariant in $B$, the assumption that Theorem \ref{th: P^n_o SL(n)-contravariant}
	holds in dimension $n-1$ implies that
	\[
		\mu_* [B,J] = \xi (J) m'(B^*) + \delta_{n, 3} \tilde \xi (J) \rho_{\frac \pi 2} m'(B) ,
	\]
	where $\xi, \tilde \xi \colon \cP^1_o \to \R$ and $\delta_{n, 3}$ denotes the Kronecker delta.
	For an arbitrary map $\theta \in \VLn[n-1] \setminus \SLn[n-1]$ we have
	\begin{align*}
		\xi (-J) m'(B^*) + \delta_{n, 3} \tilde \xi (-J) \rho_{\frac \pi 2} m'(B)
		&= \mu_* [B,-J] \\
		&= \theta^t \mu_* [\theta B, J] \\
		&= \theta^t \left( \xi (J) m'((\theta B)^*) + \delta_{n, 3} \tilde \xi (J) \rho_{\frac \pi 2} m'(\theta B) \right) \\
		&= \xi (J) m'(B^*) - \delta_{n, 3} \tilde \xi (J) \rho_{\frac \pi 2} m'(B) ,
	\end{align*}
	where we used the covariance of moment vectors and \eqref{contravariance polarity}.
	By comparing degrees of homogeneity with respect to $B$, we see that $\xi$ is even and $\tilde \xi$ odd.
	Similarly, we can check that $\xi$ and $\tilde \xi$ are measurable valuations
	and that $\xi$ is $(-1)$-homogeneous and $\tilde \xi$ $2$-homogeneous,
	where we used that $\tilde \xi$ only shows up for $n = 3$.
	By Theorems \ref{th: dim 1 even} and \ref{th: dim 1 odd} there exist constants $k_3, \tilde k_3 \in \R$ such that
	\begin{equation}\label{eq: induction P^n_o SL(n)-contravariant - mu_*}
		\mu_* [B,J] = k_3 \left( c^{-1} + d^{-1} \right) m'(B^*)
			+ \delta_{n, 3} \tilde k_3 \left( d^2 - c^2 \right) \rho_{\frac \pi 2} m'(B) .
	\end{equation}
	For $i \in \{ 1, \ldots, n \}$, let $I_i := [-a_i e_i, b_i e_i]$, $a_i, b_i > 0$.
	The $\SLn$-contravariance of $\mu$ yields
	\[
		\mu_n [I_1, \ldots, I_{n-2}, I_{n-1}, r I_n] =
		\mu_{n-1} [I_1, \ldots, I_{n-2}, -r a_n e_{n-1}, r b_n e_{n-1}, -b_{n-1} e_n, a_{n-1} e_n] .
	\]
	Using \eqref{eq: induction P^n_o SL(n)-contravariant - mu_n} and \eqref{eq: induction P^n_o SL(n)-contravariant - mu_*},
	we can compare degrees of homogeneity in $r > 0$ to see that $2 k_2 = k_3$ and $k_0 = k_1 = \tilde k_3 = 0$.
	Therefore, the vector space of measurable valuations on $\cQ^n$ that are $\SLn$-contravariant is at most $1$-dimensional.
	Since the map $P \mapsto m (P^*)$ is a measurable valuation on $\cQ^n$ which is $\SLn$-contravariant,
	we must have $\mu (P) = k m (P^*) $ for some constant $k \in \R$ and all polytopes $P \in \cQ^n$.
	An application of Lemma \ref{le: induction R^n_o SL(n)-contravariant vanishes} to the difference $\mu (P) - k m (P^*)$
	and a glance at Theorem \ref{th: determined by values on SL(n)(R^n)} complete the proof.
\end{proof}

Next, we establish two facts on matrix valued valuations which will be crucial for our induction.

\begin{lemma}\label{le: R^n_o GL(n)-covariant matrix-valued vanishes}
	Let $n \geq 3$.
	Assume that $\mu \colon \cR^n \to \R^{n \times n}$ is a measurable valuation which is $\GLn$-covariant and vanishes on $\cQ^n$.
	Furthermore, assume that $\mu (P)$ is an antisymmetric matrix for all $P \in \cP^n_o$.
	Then $\mu$ vanishes on $\cR^n$.
\end{lemma}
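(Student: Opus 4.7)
The plan is to adapt the strategy of Lemma \ref{le: induction R^n_o SL(n)-contravariant vanishes} to the matrix-valued antisymmetric setting, with the new ingredient being a scaling argument (like the one used in the proof of Theorem \ref{th: P^2_o GL(2)-covariant matrix-valued}) that exploits $\GLn$-covariance to pin down not just additivity but also homogeneity of the relevant blocks of the matrix.

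First, exactly as at the beginning of the proof of Lemma \ref{le: induction R^n_o SL(n)-contravariant vanishes}, I would use the valuation property together with the hypothesis that $\mu$ vanishes on $\cQ^n$ to show that $\mu[B, -c\ptwo{x}{1}, d\ptwo{y}{1}]$ is independent of $c, d > 0$, so that the function
\[
	f^B(x, y) := \mu\!\left[B, -c\ptwo{x}{1}, d\ptwo{y}{1}\right]
\]
is well defined on $\R^{n-1} \times \R^{n-1}$. The same valuation cut, combined with vanishing on $\cQ^n$, yields
\[
	f^B(x, y) = f^B(x, 0) + f^B(0, y),
\]
while the $\SLn$-covariance applied to the shear $\phi_y := \ptwotwo{\Id'}{0}{y}{1}$ (which has determinant $1$) gives
\[
	f^B(x, y) = \phi_y \, f^B(x-y, 0) \, \phi_y^t.
\]

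Using antisymmetry, I would write $f^B(x, 0) = \ptwotwo{A(x)}{u(x)}{-u(x)^t}{0}$ with $A(x) \in \R^{(n-1)\times(n-1)}$ antisymmetric and $u(x) \in \R^{n-1}$. Combining the two displayed relations yields $\phi_y [f^B(x-y, 0) - f^B(-y, 0)] \phi_y^t = f^B(x, 0)$, and a direct block computation shows that conjugation by $\phi_y$ leaves the top-left block untouched. Reading off the top-left block gives $A(x-y) - A(-y) = A(x)$, which after setting $x = y$ shows $A$ is odd and hence satisfies Cauchy's equation $A(x-y) = A(x) - A(y)$. Measurability of $\mu$ (hence of $A$) then forces $A$ to be linear in $x$.

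Next, I would apply the $\GLn$-covariance to the scaling $\psi_r := \mathrm{diag}(1, \ldots, 1, r)$, $r > 0$, which fixes $B \subset e_n^\perp$. Since $|\det \psi_r| = r$, this produces
\[
	f^B(x/r, 0) = r \, \psi_r \, f^B(x, 0) \, \psi_r^t,
\]
whose block decomposition reads $A(x/r) = r A(x)$ and $u(x/r) = r^2 u(x)$. Combined with the linearity $A(x/r) = A(x)/r$ just obtained, the first identity forces $A \equiv 0$. With $A$ out of the way, the top-right block of the relation $\phi_y [f^B(x-y, 0) - f^B(-y, 0)] \phi_y^t = f^B(x, 0)$ collapses to $u(x-y) = u(x) + u(-y)$, which again yields Cauchy and hence linearity of $u$ via measurability; comparing with $u(x/r) = r^2 u(x)$ then gives $u \equiv 0$. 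Thus $f^B(x, 0) = 0$, and the shear relation forces $f^B(x, y) = 0$, completing the proof.

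The main obstacle is the order of operations in the final two steps: because the top-right block of $\phi_y M \phi_y^t$ depends on both $A$ and $u$ (via a term $A y^t + u$), the equation for $u$ only decouples into pure Cauchy form after $A$ has already been shown to vanish. Carrying out the two homogeneity-versus-linearity contradictions in the correct order (first on $A$, then on $u$) is what makes the argument close.
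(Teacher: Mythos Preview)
Your overall strategy matches the paper's: establish that $f^B$ is independent of $c,d$, combine the shear relation with the additive split to extract a Cauchy equation block by block, and then use a scaling to produce an incompatible homogeneity that kills each block. There is, however, one computational slip. The shear that actually realises $f^B(x,y)=\phi_y\,f^B(x-y,0)\,\phi_y^t$ is the one fixing $B\subset e_n^\perp$ pointwise and sending $e_n$ to $\binom{y}{1}$, namely $\phi_y=\left(\begin{smallmatrix}\Id'&y\\0&1\end{smallmatrix}\right)$; for an antisymmetric $M=\left(\begin{smallmatrix}A&u\\-u^t&0\end{smallmatrix}\right)$ one computes
\[
\phi_y\, M\, \phi_y^t=\begin{pmatrix}A-yu^t+uy^t & u\\ -u^t & 0\end{pmatrix},
\]
so it is the top-\emph{right} block $u$ that is left untouched by the conjugation, not the top-left block $A$. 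Your ``order of operations'' paragraph is therefore reversed: $u$ satisfies Cauchy directly, linearity together with your relation $u(x/r)=r^2 u(x)$ forces $u\equiv 0$, and only then does the top-left block decouple, after which $A(x/r)=rA(x)$ against linearity gives $A\equiv 0$. With this swap the argument closes exactly as you outlined.

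One cosmetic difference from the paper: you scale in the $e_n$ direction via $\psi_r=\operatorname{diag}(1,\dots,1,r)$, whereas the paper scales the base via $\operatorname{diag}(r,\dots,r,1)$ and compares homogeneity degrees $r^{n-1}$ versus $r^{n+2}$ for the $*n$ block and $r^n$ versus $r^{n+2}$ for the $**$ block. Both scalings exploit the independence of $c,d$ and lead to the same contradiction.
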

\begin{proof}
	Suppose that $B, c, d, x, y$ form a double pyramid.
	By the valuation property
	\begin{multline*}
		\mu \left[ B, -c \ptwo x 1, d \ptwo y 1 \right] + \mu \left[ B, -s \ptwo y 1, t \ptwo y 1 \right] = \\
		\mu \left[ B, -c \ptwo x 1, t \ptwo y 1 \right] + \mu \left[ B, -s \ptwo y 1, d \ptwo y 1 \right]
	\end{multline*}
	for sufficiently small $s,t > 0$.
	The $\GLn$-covariance and the assumption that $\mu$ vanishes on $\cQ^n$ yield
	\[
		\mu \left[ B, -s \ptwo y 1, t \ptwo y 1 \right] = 0
		\quad \textrm{ and } \quad
		\mu \left[ B, -s \ptwo y 1, d \ptwo y 1 \right] = 0 .
	\]
	Therefore,
	\[
		\mu \left[ B, -c \ptwo x 1, d \ptwo y 1 \right] = \mu \left[ B, -c \ptwo x 1, t \ptwo y 1 \right] ,
	\]
	i.e.\ the left hand side is independent of $d$.
	Similarly, we see that it is also independent of $c$.
	So for each $B \in \cP^{n-1}$ we can define a function $f^B \colon \R^{n-1} \times \R^{n-1} \to \R^{n \times n}$ by
	\[
		f^B(x,y) = \mu \left[ B, -c \ptwo x 1, d \ptwo y 1 \right] ,
	\]
	as long as $B, c, d, x, y$ form a double pyramid.
	By the $\GLn$-covariance we have
	\begin{equation} \label{eq: f^B(x-y, 0) matrix-valued}
		f^B(x, y) = \ptwotwo {\Id'} 0 y 1 f^B(x-y, 0) \ptwotwo {\Id'} {y^t} 0 1 .
	\end{equation}
	The valuation property again implies that
	\[
		\mu \left[ B, -c \ptwo x 1, d \ptwo y 1 \right] + \mu [ B, -r e_n, r e_n] =
		\mu \left[ B, -c \ptwo x 1, r e_n \right] + \mu \left[ B, -r e_n, d \ptwo y 1 \right]
	\]
	for sufficiently small $r > 0$.
	Since $\mu [ B, -r e_n, r e_n] = 0$ and by using \eqref{eq: f^B(x-y, 0) matrix-valued}, this yields
	\begin{equation} \label{eq: f^B(x, 0) matrix-valued}
		f^B(x+y, 0) = \ptwotwo {\Id'} 0 y 1 f^B(x, 0) \ptwotwo {\Id'} {y^t} 0 1 + f^B(y, 0) .
	\end{equation}
	By the antisymmetry, $f^B_{nn}$ vanishes.
	Using \eqref{eq: f^B(x, 0) matrix-valued}, it is easy to see that $x \mapsto f^B_{* n}(x,0)$ satisfies
	Cauchy's functional equation.
	The measurability of $\mu$ implies that also $f^B_{* n}$ is measurable.
	Hence $x \mapsto f^B_{* n}(x,0)$ is linear.
	The definition of $f^B$ therefore implies that
	\begin{equation}\label{*n linearity}
	  \mu_{*n} \left[ I, -c \ptwo {rx} 1, d e_n \right] = r \mu_{*n} \left[ I, -c \ptwo x 1, d e_n \right]
	\end{equation}
	for $r > 0$.
	On the one hand, by the $\GLn$-covariance and \eqref{*n linearity}, we obtain
	\begin{align*}
		\mu_{*n} \left[ r B, -c \ptwo x 1, d e_n \right]
		&= \mu_{*n} \left( \ptwotwo {r \Id'} 0 0 1 \left[ B, -c \ptwo {\frac x r} 1, d e_n \right] \right) \\
		&= r^n \mu_{*n} \left[ B, -c \ptwo {\frac x r} 1, d e_n \right] \\
		&= r^{n-1} \mu_{*n} \left[ B, -c \ptwo x 1, d e_n \right] .
	\end{align*}
	On the other hand, the $\GLn$-covariance and the independence with respect to $c$ and $d$ prove
	\[
		\mu_{*n} \left[ r B, -c \ptwo x 1, d e_n \right]
		= \mu_{*n} \left( r \left[ B, - \frac c r \ptwo x 1, \frac d r e_n \right] \right)
		= r^{n+2} \mu_{*n} \left[ B, -c \ptwo x 1, d e_n \right] .
	\]
	Therefore, using \eqref{eq: f^B(x-y, 0) matrix-valued}, $\mu_{*n}$ and by antisymmetry also $\mu_{n*}$ vanish on $\cR^n$.
	Using what we have just shown, the same approach yields that
	\[
		r^n \mu_{**} \left[ B, -c \ptwo x 1, d e_n \right] = r^{n+2} \mu_{**} \left[ B, -c \ptwo x 1, d e_n \right] .
	\]
	Consequently, $\mu_{**}$ vanishes on $\cR^n$.
\end{proof}

\begin{lemma}\label{le: induction P^n_o GL(n)-covariant matrix-valued}
	Let $n \geq 3$ and suppose that Theorems \ref{th: P^n_o SL(n)-covariant}
	and \ref{th: P^n_o GL(n)-covariant matrix-valued} hold in dimension $n-1$.
	Assume that $\mu \colon \cP^n_o \to \R^{n \times n}$ is a measurable valuation which is $\GLn$-covariant.
	Then there exists a $k \in \R$ such that
	\[
		\mu (P) = k \, M_2(P)
	\]
	for all $P \in \cP^n_o$.
\end{lemma}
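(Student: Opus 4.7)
The plan is to mimic the strategy of Lemma \ref{le: induction P^n_o SL(n)-contravariant}, adapted by first splitting $\mu$ into its symmetric and antisymmetric parts. Write $\mu = \mu^s + \mu^a$ with $\mu^s(P) := \tfrac{1}{2}(\mu(P) + \mu(P)^t)$ and $\mu^a(P) := \tfrac{1}{2}(\mu(P) - \mu(P)^t)$; both are measurable $\GLn$-covariant valuations (because $(\phi M \phi^t)^t = \phi M^t \phi^t$), so Theorem \ref{th: P^n_o GL(n)-covariant matrix-valued symmetric} immediately gives $\mu^s = k \, M_2$ for some $k \in \R$. It therefore suffices to show that $\mu^a$ vanishes identically.

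The second step is to analyse $\mu^a$ on straight double pyramids $P = [B, J] \in \cQ^n$, with $J = [-c e_n, d e_n]$. Decompose $\mu^a(P)$ into the blocks $\mu^a_{**}$, $\mu^a_{*n}$, $\mu^a_{n*} = -(\mu^a_{*n})^t$ and $\mu^a_{nn} = 0$. Restricting $\GLn$-covariance to $\phi = \operatorname{diag}(\psi, 1)$ with $\psi \in \GLn[n-1]$, the map $B \mapsto \mu^a_{**}[B,J]$ is a measurable $\GLn[n-1]$-covariant matrix-valued valuation; by the induction hypothesis it is a scalar multiple of the symmetric matrix $M_2'(B)$, and antisymmetry forces this multiple to be zero. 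Similarly, $B \mapsto \mu^a_{*n}[B,J]$ is a measurable $\SLn[n-1]$-covariant vector-valued valuation, so by the induction hypothesis it equals $k_{*n}(J) \, m'(B)$; the extra term $\rho_{\frac \pi 2} m'(B^*)$ that can occur for $n = 3$ is ruled out by observing that $\mu^a_{*n}$ inherits genuine $\VLn[n-1]$-covariance from the $\GLn$-covariance of $\mu$, whereas $B \mapsto \rho_{\frac \pi 2} m'(B^*)$ is only $\VLn[2]$-signum-covariant.

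Third, the scalar $k_{*n}(J)$ is itself a measurable valuation on $\cP^1_o$. Using $\GLn$-covariance under $\operatorname{diag}(r \Id_{n-1}, s)$ with $r^{n-1} s = 1$ and under $\operatorname{diag}(\Id_{n-1}, -1)$ shows that $k_{*n}$ is odd and $2$-homogeneous, and Theorem \ref{th: dim 1 odd} then forces $k_{*n}(J) = \gamma (d^2 - c^2)$ for some $\gamma \in \R$. Consequently $\mu^a[B,J]$ on $\cQ^n$ is determined up to the single unknown $\gamma$.

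The decisive and subtlest step is to show $\gamma = 0$. To this end, I would apply $\GLn$-covariance under the permutation $\sigma$ swapping $e_1$ with $e_n$, which has determinant $-1$. For a box-based straight double pyramid $P = [I_1, \dots, I_{n-1}, J]$ with $I_j = [-a_j e_j, b_j e_j]$, the image $\sigma P$ again lies in $\cQ^n$, with the parameters $(a_1, b_1)$ and $(c, d)$ simply interchanged. Comparing $(\sigma \mu^a(P) \sigma^t)_{1,n} = \mu^a(P)_{n,1} = -\mu^a(P)_{1,n}$ with the explicit formula for $\mu^a_{1,n}(\sigma P)$ produces a relation of the form $\gamma X = -\gamma X$ with $X = (d^2 - c^2)(b_1^2 - a_1^2) \prod_{j \geq 2}(a_j + b_j)/2$, which is nonzero for generic parameters, forcing $\gamma = 0$. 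Thus $\mu^a$ vanishes on $\cQ^n$; Lemma \ref{le: R^n_o GL(n)-covariant matrix-valued vanishes} extends this to $\cR^n$, and Theorem \ref{th: determined by values on SL(n)(R^n)} applied componentwise extends it to all of $\cP^n_o$, completing the proof.
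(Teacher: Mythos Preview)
Your proof is correct and follows essentially the same route as the paper: split off the symmetric part via Theorem~\ref{th: P^n_o GL(n)-covariant matrix-valued symmetric}, analyse the antisymmetric part on $\cQ^n$ block by block using the induction hypotheses, reduce to a single constant $\gamma$, kill $\gamma$ via a coordinate swap, and finish with Lemma~\ref{le: R^n_o GL(n)-covariant matrix-valued vanishes} and Theorem~\ref{th: determined by values on SL(n)(R^n)}. The only cosmetic differences are that the paper rules out the extra $\rho_{\frac\pi2}m'(B^*)$ term (for $n=3$) by $n$-homogeneity in $B$ rather than by $\VLn[n-1]$-covariance, and it kills $\gamma$ by swapping $e_{n-1}\leftrightarrow e_n$ on a polytope \emph{fixed} by that swap (so that $\mu_{n-1,n}=\mu_{n,n-1}=-\mu_{n-1,n}$ immediately), which avoids the explicit moment computation you carry out.
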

\begin{proof}
	As in the proof of Theorem \ref{th: P^2_o GL(2)-covariant matrix-valued},
	it suffices to show that $\mu$ vanishes
	if $\mu (P)$ is antisymmetric for all $P \in \cP^n_o$.
	Moreover, by Lemma \ref{le: R^n_o GL(n)-covariant matrix-valued vanishes}, the assumed $\GLn$-covariance
	and Theorem \ref{th: determined by values on SL(n)(R^n)},
	it is enough to show that $\mu$ vanishes on $\cQ^n$.
	
	For fixed $J$, the map $B \mapsto \mu_{**} [B, J]$ satisfies the conditions of Theorem
	\ref{th: P^n_o GL(n)-covariant matrix-valued} in dimension $n-1$.
	In particular, it has symmetric images.
	But, by assumption, these images are also antisymmetric.
	So the map $B \mapsto \mu_{**} [B, J]$ vanishes on $\cP^{n-1}_o$.
	
	By the antisymmetry, the only thing left to show is that $\mu_{*n}$ vanishes.
	The expression $\mu_{*n} [B, J]$ is $\SLn[n-1]$-covariant and $n$-homogeneous in $B$,
	and odd and $2$-homogeneous in $J$.
	Since it is also a measurable valuation in both arguments,
	we deduce from Theorem \ref{th: dim 1 odd} and the assumption that Theorem \ref{th: P^n_o SL(n)-covariant}
	holds in dimension $n-1$,
	\[
		\mu_{*n} [B, J] = k \left( d^2 - c^2 \right) m'(B) .
	\]
	For $i \in \{ 1, \ldots, n-2 \}$, let $I_i$ be a line segment in $\spn \{ e_i \}$ containing the origin in its interior.
	The $\GLn$-covariance of $\mu$ applied to the map which interchanges the $(n-1)$-st and $n$-th component of a vector yields
	\[
		\mu_{n-1, n} \left[ I_1, \ldots, I_{n-2}, -c e_{n-1}, d e_{n-1}, J \right] =
		\mu_{n, n-1} \left[ I_1, \ldots, I_{n-2}, -c e_{n-1}, d e_{n-1}, J \right] .
	\]
	By the antisymmetry, the above quantities vanish.
	Therefore, $k = 0$.
	This completes the proof.
\end{proof}

After these preparations we can now prove the main theorems of this section by induction on the dimension.
\begin{proof}
	[
		Proof of Theorems
		\ref{th: P^n_o SL(n)-covariant}, \ref{th: P^n_o GL(n)-covariant matrix-valued},
		\ref{th: P^n_o SL(n)-contravariant} and \ref{th: P^n_o GL(n)-contravariant matrix-valued}
	]
	For $n=2$, these theorems have been proved in the previous section.
	Assume that all four theorems hold in dimension $n-1$.
	By Lemma \ref{le: induction P^n_o SL(n)-contravariant}, Theorem \ref{th: P^n_o SL(n)-contravariant} holds in dimension $n$.
	Moreover, Lemma \ref{le: induction P^n_o GL(n)-covariant matrix-valued} shows that
	Theorem \ref{th: P^n_o GL(n)-covariant matrix-valued} also holds in dimension $n$.
	Recall that $P \mapsto P^*$ transforms covariance into contravariance and vice versa.
	Hence, for dimension $n$, Theorems \ref{th: P^n_o SL(n)-covariant} and \ref{th: P^n_o GL(n)-contravariant matrix-valued}
	follow directly from Theorems \ref{th: P^n_o SL(n)-contravariant} and \ref{th: P^n_o GL(n)-covariant matrix-valued}, respectively.
\end{proof}

	\section{Acknowledgements}
		The work of the second author was supported by the European Research Council (ERC)
within the project ``Isoperimetric inequalities and integral geometry'', Project number: 306445.
The work of the second author was also supported by the ETH Zurich Postdoctoral Fellowship Program
and the Marie Curie Actions for People COFUND Program.
The second author wants to especially thank Michael Eichmair for welcoming him at the ETH Zurich and being his mentor there.

	\begin{bibdiv}
		\begin{biblist}
			\bib{AbaBer11}{article}{
   author={Abardia, Judit},
   author={Bernig, Andreas},
   title={Projection bodies in complex vector spaces},
   journal={Adv. Math.},
   volume={227},
   date={2011},
   number={2},
   pages={830--846},
}

\bib{Alesker99}{article}{
   author={Alesker, Semyon},
   title={Continuous rotation invariant valuations on convex sets},
   journal={Ann. of Math. (2)},
   volume={149},
   date={1999},
   number={3},
   pages={977--1005},
}

\bib{Alesker01}{article}{
   author={Alesker, Semyon},
   title={Description of translation invariant valuations on convex sets with solution of P. McMullen's conjecture},
   journal={Geom. Funct. Anal.},
   volume={11},
   date={2001},
   number={2},
   pages={244--272},
}

\bib{Alesker03}{article}{
   author={Alesker, Semyon},
   title={Hard Lefschetz theorem for valuations, complex integral geometry, and unitarily invariant valuations},
   journal={J. Differential Geom.},
   volume={63},
   date={2003},
   number={1},
   pages={63--95},
}

\bib{alesker04}{article}{
   author={Alesker, Semyon},
   title={The multiplicative structure on continuous polynomial valuations},
   journal={Geom. Funct. Anal.},
   volume={14},
   date={2004},
   number={1},
   pages={1--26},
}

\bib{aleber12}{article}{
   author={Alesker, Semyon},
   author={Bernig, Andreas},
   title={The product on smooth and generalized valuations},
   journal={Amer. J. Math.},
   volume={134},
   date={2012},
   number={2},
   pages={507--560},
}

\bib{AleBerSchu}{article}{
   author={Alesker, Semyon},
   author={Bernig, Andreas},
   author={Schuster, Franz E.},
   title={Harmonic analysis of translation invariant valuations},
   journal={Geom. Funct. Anal.},
   volume={21},
   date={2011},
   number={4},
   pages={751--773},
}

\bib{bernig07a}{article}{
   author={Bernig, Andreas},
   title={Valuations with Crofton formula and Finsler geometry},
   journal={Adv. Math.},
   volume={210},
   date={2007},
   number={2},
   pages={733--753},
}

\bib{bernig08}{article}{
   author={Bernig, Andreas},
   title={A Hadwiger-type theorem for the special unitary group},
   journal={Geom. Funct. Anal.},
   volume={19},
   date={2009},
   number={2},
   pages={356--372},
}

\bib{BerFu10}{article}{
   author={Bernig, Andreas},
   author={Fu, Joseph H. G.},
   title={Hermitian integral geometry},
   journal={Ann. of Math. (2)},
   volume={173},
   date={2011},
   number={2},
   pages={907--945},
}

\bib{berhu14}{article}{
   author={Bernig, Andreas},
   author={Hug, Daniel},
   title={Kinematic formulas for tensor valuations},
   date={preprint},
}

\bib{BorLutYanZha}{article}{
   author={B{\"o}r{\"o}czky, K{\'a}roly J.},
   author={Lutwak, Erwin},
   author={Yang, Deane},
   author={Zhang, Gaoyong},
   title={The logarithmic Minkowski problem},
   journal={J. Amer. Math. Soc.},
   volume={26},
   date={2013},
   number={3},
   pages={831--852},
}

\bib{CLYZ}{article}{
   author={Cianchi, Andrea},
   author={Lutwak, Erwin},
   author={Yang, Deane},
   author={Zhang, Gaoyong},
   title={Affine Moser-Trudinger and Morrey-Sobolev inequalities},
   journal={Calc. Var. Partial Differential Equations},
   volume={36},
   date={2009},
   number={3},
   pages={419--436},
}

\bib{fu06}{article}{
   author={Fu, Joseph H. G.},
   title={Structure of the unitary valuation algebra},
   journal={J. Differential Geom.},
   volume={72},
   date={2006},
   number={3},
   pages={509--533},
}

\bib{Gar95}{book}{
   author={Gardner, Richard J.},
   title={Geometric tomography},
   series={Encyclopedia of Mathematics and its Applications},
   volume={58},
   edition={2},
   publisher={Cambridge University Press, Cambridge},
   date={2006},
   pages={xxii+492},
}

\bib{garhugwei13}{article}{
   author={Gardner, Richard J.},
   author={Hug, Daniel},
   author={Weil, Wolfgang},
   title={The Orlicz-Brunn-Minkowski theory: a general framework, additions, and inequalities},
   journal={J. Differential Geom.},
   volume={97},
   date={2014},
   number={3},
   pages={427--476},
}

\bib{Hab10amj}{article}{
   author={Haberl, Christoph},
   title={Blaschke valuations},
   journal={Amer. J. Math.},
   volume={133},
   date={2011},
   number={3},
   pages={717--751},
}

\bib{Hab11}{article}{
   author={Haberl, Christoph},
   title={Minkowski valuations intertwining with the special linear group},
   journal={J. Eur. Math. Soc. (JEMS)},
   volume={14},
   date={2012},
   number={5},
   pages={1565--1597},
}

\bib{HabLutYanZha09}{article}{
   author={Haberl, Christoph},
   author={Lutwak, Erwin},
   author={Yang, Deane},
   author={Zhang, Gaoyong},
   title={The even Orlicz Minkowski problem},
   journal={Adv. Math.},
   volume={224},
   date={2010},
   number={6},
   pages={2485--2510},
}

\bib{habpar13}{article}{
   author={Haberl, Christoph},
   author={Parapatits, Lukas},
   title={The Centro-Affine Hadwiger Theorem},
   journal={J. Amer. Math. Soc.},
   volume={27},
   date={2014},
   number={3},
   pages={685--705},
}

\bib{habpar13a}{article}{
   author={Haberl, Christoph},
   author={Parapatits, Lukas},
   title={Valuations and surface area measures},
   journal={J. Reine Angew. Math.},
   volume={687},
   date={2014},
   pages={225--245},
}

\bib{HSX}{article}{
   author={Haberl, Christoph},
   author={Schuster, Franz E.},
   author={Xiao, Jie},
   title={An asymmetric affine P\'olya-Szeg\"o principle},
   journal={Math. Ann.},
   volume={352},
   date={2012},
   number={3},
   pages={517--542},
}

\bib{hugsch13}{article}{
   author={Hug, Daniel},
   author={Schneider, Rolf},
   title={Local tensor valuations},
   journal={Geom. Funct. Anal.},
   volume={24},
   date={2014},
   number={5},
   pages={1516--1564},
}

\bib{Hugschsch08}{article}{
   author={Hug, Daniel},
   author={Schneider, Rolf},
   author={Schuster, Ralph},
   title={Integral geometry of tensor valuations},
   journal={Adv. in Appl. Math.},
   volume={41},
   date={2008},
   number={4},
   pages={482--509},
}

\bib{Klain96}{article}{
   author={Klain, Daniel A.},
   title={Star valuations and dual mixed volumes},
   journal={Adv. Math.},
   volume={121},
   date={1996},
   number={1},
   pages={80--101},
}

\bib{Ludwig:moment}{article}{
   author={Ludwig, Monika},
   title={Moment vectors of polytopes},
   journal={Rend. Circ. Mat. Palermo (2) Suppl.},
   number={70},
   date={2002},
   pages={123--138},
}

\bib{Lud02advproj}{article}{
   author={Ludwig, Monika},
   title={Projection bodies and valuations},
   journal={Adv. Math.},
   volume={172},
   date={2002},
   number={2},
   pages={158--168},
}

\bib{Lud02advval}{article}{
   author={Ludwig, Monika},
   title={Valuations of polytopes containing the origin in their interiors},
   journal={Adv. Math.},
   volume={170},
   date={2002},
   number={2},
   pages={239--256},
}

\bib{Lud03}{article}{
   author={Ludwig, Monika},
   title={Ellipsoids and matrix-valued valuations},
   journal={Duke Math. J.},
   volume={119},
   date={2003},
   number={1},
   pages={159--188},
}

\bib{Lud05}{article}{
   author={Ludwig, Monika},
   title={Minkowski valuations},
   journal={Trans. Amer. Math. Soc.},
   volume={357},
   date={2005},
   number={10},
   pages={4191--4213},
}

\bib{Lud06ajm}{article}{
   author={Ludwig, Monika},
   title={Intersection bodies and valuations},
   journal={Amer. J. Math.},
   volume={128},
   date={2006},
   number={6},
   pages={1409--1428},
}

\bib{Lud10}{article}{
   author={Ludwig, Monika},
   title={Minkowski areas and valuations},
   journal={J. Differential Geom.},
   volume={86},
   date={2010},
   number={1},
   pages={133--161},
}

\bib{Lud11}{article}{
   author={Ludwig, Monika},
   title={Fisher information and matrix-valued valuations},
   journal={Adv. Math.},
   volume={226},
   date={2011},
   number={3},
   pages={2700--2711},
}

\bib{Lud12}{article}{
   author={Ludwig, Monika},
   title={Valuations on Sobolev spaces},
   journal={Amer. J. Math.},
   volume={134},
   date={2012},
   number={3},
   pages={827--842},
}

\bib{LR10}{article}{
   author={Ludwig, Monika},
   author={Reitzner, Matthias},
   title={A classification of ${\rm SL}(n)$ invariant valuations},
   journal={Ann. of Math. (2)},
   volume={172},
   date={2010},
   number={2},
   pages={1219--1267},
}

\bib{LutYanZha02jdg}{article}{
   author={Lutwak, Erwin},
   author={Yang, Deane},
   author={Zhang, Gaoyong},
   title={Sharp affine $L_p$ Sobolev inequalities},
   journal={J. Differential Geom.},
   volume={62},
   date={2002},
   number={1},
   pages={17--38},
}

\bib{LutYanZha09jdg}{article}{
   author={Lutwak, Erwin},
   author={Yang, Deane},
   author={Zhang, Gaoyong},
   title={Orlicz centroid bodies},
   journal={J. Differential Geom.},
   volume={84},
   date={2010},
   number={2},
   pages={365--387},
}

\bib{LutYanZha09badv}{article}{
   author={Lutwak, Erwin},
   author={Yang, Deane},
   author={Zhang, Gaoyong},
   title={Orlicz projection bodies},
   journal={Adv. Math.},
   volume={223},
   date={2010},
   number={1},
   pages={220--242},
}

\bib{Par10a}{article}{
   author={Parapatits, Lukas},
   title={${\rm SL}(n)$-contravariant $L_p$-Minkowski valuations},
   journal={Trans. Amer. Math. Soc.},
   volume={366},
   date={2014},
   number={3},
   pages={1195--1211},
}

\bib{Parapatits2014}{article}{
   author={Parapatits, Lukas},
   title={${\rm SL}(n)$-covariant $L_p$-Minkowski valuations},
   journal={J. Lond. Math. Soc. (2)},
   volume={89},
   date={2014},
   number={2},
   pages={397--414},
}

\bib{Sch13}{book}{
   author={Schneider, Rolf},
   title={Convex bodies: the Brunn-Minkowski theory},
   series={Encyclopedia of Mathematics and its Applications},
   volume={151},
   edition={Second expanded edition},
   publisher={Cambridge University Press, Cambridge},
   date={2014},
   pages={xxii+736},
}

\bib{Schduke}{article}{
   author={Schuster, Franz E.},
   title={Crofton measures and Minkowski valuations},
   journal={Duke Math. J.},
   volume={154},
   date={2010},
   number={1},
   pages={1--30},
}

\bib{SchWan10}{article}{
   author={Schuster, Franz E.},
   author={Wannerer, Thomas},
   title={${\rm GL}(n)$ contravariant Minkowski valuations},
   journal={Trans. Amer. Math. Soc.},
   volume={364},
   date={2012},
   number={2},
   pages={815--826},
}

\bib{sta12}{article}{
   author={Stancu, Alina},
   title={Centro-affine invariants for smooth convex bodies},
   journal={Int. Math. Res. Not. IMRN},
   date={2012},
   number={10},
   pages={2289--2320},
}

\bib{Wannerer2011}{article}{
   author={Wannerer, Thomas},
   title={${\rm GL}(n)$ equivariant Minkowski valuations},
   journal={Indiana Univ. Math. J.},
   volume={60},
   date={2011},
   number={5},
   pages={1655--1672},
}

\bib{wannerer13a}{article}{
   author={Wannerer, Thomas},
   title={The module of unitarily invariant area measures},
   journal={J. Differential Geom.},
   volume={96},
   date={2014},
   number={1},
   pages={141--182},
}

\bib{wannerer13}{article}{
   author={Wannerer, Thomas},
   title={Integral geometry of unitary area measures},
   journal={Adv. Math.},
   volume={263},
   date={2014},
   pages={1--44},
}

\bib{wer13}{article}{
   author={Werner, Elisabeth M.},
   title={R\'enyi divergence and $L_p$-affine surface area for convex bodies},
   journal={Adv. Math.},
   volume={230},
   date={2012},
   number={3},
   pages={1040--1059},
}

\bib{ye13}{article}{
   author={Ye, Deping},
   title={On the monotone properties of general affine surface areas under the Steiner symmetrization},
   journal={Indiana Univ. Math. J.},
   volume={63},
   date={2014},
   number={1},
   pages={1--19},
}

		\end{biblist}
	\end{bibdiv}

	\goodbreak

	\vspace{1cm}
	\noindent
	Christoph Haberl \\
	Vienna University of Technology \\
	Institute of Discrete Mathematics and Geometry \\
	Wiedner Hauptstraße 8--10/104 \\
	1040 Wien, Austria \\
	christoph.haberl@tuwien.ac.at

	\vspace{1cm}
	\noindent
	Lukas Parapatits \\
	ETH Zurich\\
	Department of Mathematics \\
	Rämistrasse 101\\
	8092 Zürich, Switzerland \\
	lukas.parapatits@math.ethz.ch
\end{document}